\documentclass[11pt,oneside]{report}
\flushbottom  

\usepackage[utf8]{inputenc}
\usepackage[T1]{fontenc}
\usepackage[english]{babel}

\usepackage[all]{xy}
\usepackage{amsmath,amssymb,amsfonts,amsthm}
\usepackage{mathtools}

\usepackage{graphicx}
\usepackage{tikz-cd}

\usepackage{hyperref}
\usepackage{cleveref}

\usepackage{geometry}
\geometry{a4paper, margin=1in}
\usepackage{microtype}
\raggedbottom
\usepackage{hyphenat}

\usepackage{enumitem}
\usepackage{csquotes}

\usepackage{makeidx}
\makeindex

\usepackage{bussproofs}  
\usepackage{tikz}       

\theoremstyle{plain} 
\newtheorem{thm}{Theorem}[section] 

\theoremstyle{definition} 
\newtheorem{defn}[thm]{Definition} 

\theoremstyle{remark} 
\newtheorem*{remark}{Remark} 

\title{A Categorical Integration of Logical Connectives via Higher Category Theory}
\author{Joaquim Reizi Barreto}
\date{\today}

\begin{document}

\maketitle
\tableofcontents

\chapter{Introduction}

\section{Research Background and Motivation}
\subsection{The Role of Logical Connectives in Categorical Semantics}
\label{subsec:logical_connectives}

In categorical semantics, \index{logical connective} logical connectives such as negation, conjunction, disjunction, and implication play a crucial role in bridging syntactic expressions with their semantic interpretations. This section provides an overview of the semantic significance of these connectives within a categorical framework.

\paragraph{Negation:}
The notion of negation in a categorical setting often corresponds to the existence of an adjunction or a dualizing object. Specifically, the \index{negation} negation can be modeled using the duality functor in a \index{closed category} closed category, where the complement of an object is understood in terms of its relationship with a fixed dualizing object.

\paragraph{Conjunction and Disjunction:}
Conjunction and disjunction are generally modeled by \index{product} products and \index{coproduct} coproducts respectively. The universal properties of products and coproducts ensure that they capture the essence of logical conjunction and disjunction. In particular, the product in a category abstracts the notion of conjunction by providing a unique arrow that factors through projections, while the coproduct abstracts disjunction via injections.

\paragraph{Implication:}
Implication in categorical semantics is typically interpreted through the notion of an exponential object in a \index{cartesian closed category} cartesian closed category. The exponential object $B^A$ represents the space of morphisms from $A$ to $B$, thereby capturing the idea of a functional relationship or logical implication.

The interplay between these logical connectives and categorical structures not only provides a robust semantic framework but also reveals deep connections between logic and category theory. In subsequent sections, we will formalize these ideas by constructing local categories corresponding to each logical connective and exploring their integration into a unified categorical semantics.

\begin{center}
\begin{tikzcd}[column sep=large, row sep=large]
\textbf{Logic} \arrow[r, "\text{Semantic Interpretation}"] \arrow[d, "\text{Connective Structure}"'] & \textbf{Category} \\
\text{Negation} \arrow[r, "\text{Duality}"'] & \text{Closed Category} \\
\text{Conjunction} \arrow[r, "\text{Product}"'] & \text{Category with Finite Products} \\
\text{Disjunction} \arrow[r, "\text{Coproduct}"'] & \text{Category with Finite Coproducts} \\
\text{Implication} \arrow[r, "\text{Exponential}"'] & \text{Cartesian Closed Category} \arrow[u, "\text{Adjunction}"']
\end{tikzcd}
\end{center}

\subsection{Limitations of Traditional 1-Category Approaches}

Traditional 1-category approaches, while robust in many aspects, exhibit significant limitations in addressing universality and coherence issues. In a 1-category, universal properties are defined by strict commutativity conditions, which restrict the flexibility needed for modeling complex structures where only up-to-isomorphism conditions naturally hold.

\paragraph{Issues with Universality:}
In a 1-category setting, the universal property is enforced by exact commutative diagrams. This rigidity can be problematic when the construction inherently requires a more relaxed, "weak" form of universality, where the uniqueness of mediating morphisms is defined only up to isomorphism.

\paragraph{Coherence Problems:}
Moreover, coherence problems arise due to the absence of higher morphisms. Without 2-morphisms, there is no natural mechanism to express associativity and unit laws up to coherent isomorphism, leading to potential inconsistencies when composing morphisms in complex diagrams.

The limitations discussed here motivate the extension to higher categorical frameworks, such as 2-categories, where these issues are alleviated by replacing strict equalities with natural isomorphisms.

\begin{center}
\[
\xymatrix@C=6em@R=5em{
{\text{1-Category}} 
  \ar[r]^-{\text{Strict Equality}} & 
  {\text{2-Category}} \\
{\text{Rigid Universality}} 
  \ar[r]_-{\text{Weak Universality}} & 
  {\text{Flexible Structure}} 
  \ar[u]^-{\text{2-Morphisms}}
}
\]
\end{center}

\subsection{Emergence of Higher Category Theory}
\label{subsec:emergence_higher_cat}

The emergence of \index{higher category theory} higher category theory, particularly the concepts of 2-categories and \index{bicategory} bicategories, is a direct response to the limitations observed in traditional 1-category frameworks. Traditional 1-categories enforce strict commutativity, which often fails to capture the flexible nature of structures encountered in advanced mathematical contexts such as homotopy theory and algebraic topology.

\paragraph{Historical Context:}  
The development of 2-categories and bicategories was largely motivated by the need to model phenomena where morphisms between morphisms (2-morphisms) are essential. This additional layer allows for the relaxation of strict associativity and identity laws, replacing them with coherent isomorphisms. Consequently, higher category theory offers a framework where universal properties can be satisfied up to isomorphism rather than equality.

\paragraph{Enhanced Flexibility:}  
The introduction of 2-morphisms provides the necessary flexibility to handle complex constructions by allowing multiple compositional pathways to be coherently related. This not only preserves critical structural information but also facilitates more intricate integrations of logical and algebraic systems. The shift from rigid to flexible structures underpins many modern developments in categorical semantics and related fields.

\begin{center}
\[
\xymatrix@C=6em@R=5em{
{\text{1-Category}} 
  \ar[r]^-{\text{Strict Equality}} & 
  {\text{2-Category / Bicategory}} \\
{\text{Rigid Structures}} 
  \ar[r]_-{\text{Coherent Isomorphisms}} & 
  {\text{Flexible Structures}} 
  \ar[u]^-{\text{2-Morphisms}}
}
\]
\end{center}

\newpage
\section{Problem Setting and Objectives}
\subsection{Integration of Local Categories for Logical Connectives}
\label{subsec:integration_local_cat}

In categorical semantics, each \index{local category} local category is constructed to capture the universal properties associated with a specific logical connective, such as negation, conjunction, disjunction, or implication. However, integrating these disparate local categories into a unified categorical framework presents several challenges that must be addressed to preserve both the semantic integrity and the flexibility of the overall structure.

The primary challenges in this integration process include:

\paragraph{Heterogeneity of Structures:}
Each local category is tailored to model a particular logical connective with its own universal property and compositional structure. When attempting to integrate these into a global category, one must reconcile differences in the ambient categorical frameworks, such as differing notions of limits, colimits, or exponentials. This often necessitates moving to a higher categorical setting, such as a 2-category, where the presence of \index{2-morphism} 2-morphisms allows for the expression of equivalences up to natural isomorphism.

\paragraph{Coherence and Compatibility:}
The integration must ensure that the universal constructions in the individual local categories remain coherent when combined. In other words, the transition from local to global should respect the natural isomorphisms that arise in each case, leading to the construction of \index{pseudo-limit} pseudo-limits or pseudo-colimits. These constructions are designed to preserve the essential structural properties while allowing for the necessary flexibility in the compositional rules.

\paragraph{Adjunctions and Dualities:}
Local categories often involve adjunctions or dualizing objects to model negation and implication. Integrating these requires ensuring that the adjunctions in different local settings interact in a compatible manner within the global category. This compatibility is crucial for maintaining the overall semantic consistency, particularly when transferring properties from one logical connective to another.

The following diagram illustrates the conceptual flow from local categories, each corresponding to a specific logical connective, to the integrated global category. The use of 2-morphisms (or natural isomorphisms) is essential in managing the coherence between different compositional paths.

\begin{center}
\[
\xymatrix@C=6em@R=5em{
{\text{Local Category for Negation}} 
  \ar[dr]^-{\text{Pseudo-limit}} & {} \\
{\text{Local Category for Conjunction}} 
  \ar[r]_-{\quad \quad \quad \quad} 
  \ar[u]^-{\text{Adjunctions}} & 
  {\text{Integrated Global Category}} \\
{\text{Local Category for Disjunction}} 
  \ar[ur]|-{\text{Coherent Integration}} & {} \\
{\text{Local Category for Implication}} 
  \ar[u]_-{\text{Dualities}} 
  \ar@/_4pc/[uur]_-{\text{Pseudo-colimit}} & {}
}
\]

\end{center}

\subsection{Ensuring Universality and Coherence}
\label{subsec:ensuring_universality_coherence}

Ensuring universality and coherence in the integrated category is a central challenge in our construction. In this setting, the goal is to guarantee that the universal properties inherited from each local category are preserved after integration and that all possible compositional pathways yield results that are naturally isomorphic.

\paragraph{Universality:}
To formalize the universality condition, we require that for every object in the integrated category, there exists a unique (up to a canonical isomorphism) mediating morphism satisfying the relevant factorization properties. This is captured by the following definition.

\begin{defn}
\label{def:universal_property}
Let \( F : \mathcal{L} \to \mathcal{G} \) be the integration functor from a local category \(\mathcal{L}\) (associated with a specific logical connective) into the integrated global category \(\mathcal{G}\). Then, for every object \(X\) in \(\mathcal{G}\), there exists a unique (up to isomorphism) morphism 
\[
u: F(L) \to X
\]
such that for any other morphism \(v: F(L) \to Y\) satisfying the corresponding factorization conditions, there is a unique isomorphism \(\phi: X \to Y\) making the following diagram commute.
\end{defn}

The above definition encapsulates the idea that the universal constructions defined in each local category lift coherently to the global level, albeit up to natural isomorphism.

\paragraph{Coherence:}
Coherence ensures that all different compositional pathways, which arise when integrating the various local categories, are equivalent up to a specified natural isomorphism. In other words, if multiple compositional orders exist for a given set of morphisms, then the corresponding diagrams must commute in the sense that there is a canonical \index{coherence isomorphism} coherence isomorphism linking them.

Formally, let \( A \), \( B \), and \( C \) be objects in \(\mathcal{G}\) with morphisms \( f: A \to B \) and \( g: B \to C \). Coherence requires that all composites, such as \(g\circ f\) and any alternative composite \(f\star g\) obtained via the integration process, are related by a natural isomorphism:
\[
\alpha_{f,g} : (g\circ f) \xrightarrow{\sim} f\star g.
\]
This condition guarantees that the integrated structure behaves in a manner consistent with the original universal properties.

An illustrative diagram of the coherence condition is given by
\[
\begin{tikzcd}[column sep=large, row sep=large]
& A \arrow[dr, "f"] \arrow[dl, swap, "h"] & \\
B \arrow[rr, swap, "g"] & & C
\end{tikzcd}
\]
where the two distinct composites (through \(h\) and directly via \(f\)) are connected by a natural isomorphism, ensuring overall consistency.

Finally, the integration process employs a strictification procedure to convert these weak (up-to-isomorphism) structures into an equivalent strict 2-category, thereby preserving both universality and coherence in a rigorously defined manner.

\subsection{The Need for Strictification Techniques}
\label{subsec:need_strictification_techniques}

In the study of \index{bicategory}bicategories and \index{2-category}2-categories, certain coherence diagrams are only required to commute up to specified \index{isomorphism}isomorphisms rather than strictly. This leads to a degree of flexibility but can complicate proofs and constructions. \index{strictification}Strictification techniques offer a systematic way to transform these “weak” structures into \index{strict 2-category}strict 2-categories, in which associativity and unit laws hold on the nose.

\begin{thm}[Strictification Theorem]
\label{thm:strictification}
Every \index{bicategory}bicategory is equivalent (as a bicategory) to a \index{strict 2-category}strict 2-category. More precisely, for any bicategory \(\mathcal{B}\), there exists a strict 2-category \(\mathcal{B}^{\mathrm{str}}\) and a pair of \index{pseudofunctor}pseudofunctors
\[
F: \mathcal{B} \longrightarrow \mathcal{B}^{\mathrm{str}}, 
\quad
G: \mathcal{B}^{\mathrm{str}} \longrightarrow \mathcal{B}
\]
such that \(G \circ F\) and \(F \circ G\) are each connected to the respective identity functors by \index{pseudonatural transformation}pseudonatural transformations that satisfy the coherence conditions for an equivalence of bicategories.
\end{thm}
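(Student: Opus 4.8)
The plan is to prove the Strictification Theorem via the Yoneda embedding into a strict 2-category of presheaves. This is the classical approach (due to Gordon–Power–Street and Bénabou), and it has the advantage of being clean and conceptual rather than a brute-force manipulation of coherence data.

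\medskip

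First I would recall that for any bicategory $\mathcal{B}$, one can form the strict 2-category $[\mathcal{B}^{\mathrm{op}}, \mathbf{Cat}]$ of pseudofunctors from $\mathcal{B}^{\mathrm{op}}$ into the strict 2-category $\mathbf{Cat}$ of (small) categories, together with pseudonatural transformations and modifications. The crucial observation is that even though $\mathcal{B}$ itself is only weakly associative and unital, the 2-category of $\mathbf{Cat}$-valued pseudofunctors is \emph{strict}: composition of pseudonatural transformations is strictly associative because it is ultimately computed from composition in $\mathbf{Cat}$, which is on the nose. I would take $\mathcal{B}^{\mathrm{str}}$ to be the full sub-2-category of $[\mathcal{B}^{\mathrm{op}}, \mathbf{Cat}]$ spanned by the representable pseudofunctors $\mathcal{B}(-, b)$ for objects $b$ of $\mathcal{B}$.

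\medskip

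The next step is to define the embedding $F : \mathcal{B} \to \mathcal{B}^{\mathrm{str}}$ as the 2-categorical Yoneda pseudofunctor, sending an object $b$ to the representable $\mathcal{B}(-,b)$, a $1$-morphism $f : b \to b'$ to post-composition by $f$, and a $2$-morphism to the evident whiskering. One then checks that $F$ is a local equivalence: the bicategorical Yoneda lemma supplies, for each pair of objects, an equivalence of hom-categories
\[
\mathcal{B}(b, b') \;\simeq\; \mathcal{B}^{\mathrm{str}}\bigl(\mathcal{B}(-,b), \mathcal{B}(-,b')\bigr),
\]
natural up to coherent isomorphism. Since $\mathcal{B}^{\mathrm{str}}$ was defined as the image of objects under $F$, the pseudofunctor $F$ is also essentially surjective, hence a biequivalence. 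I would then invoke (or briefly indicate) the standard fact that any biequivalence admits a pseudofunctor $G$ in the other direction together with pseudonatural equivalences $G \circ F \simeq \mathrm{id}_{\mathcal{B}}$ and $F \circ G \simeq \mathrm{id}_{\mathcal{B}^{\mathrm{str}}}$ satisfying the triangle-type coherence conditions, which is exactly the data demanded by the statement.

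\medskip

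The main obstacle, and the step deserving the most care, is verifying the \emph{coherence of the bicategorical Yoneda lemma}: one must confirm that the associator and unitor constraints of $\mathcal{B}$ are correctly absorbed into the pseudonaturality data of the transformations defining $F$, so that $F$ genuinely respects composition up to coherent invertible $2$-cells rather than merely up to unconstrained isomorphism. Concretely, the compositor isomorphisms $F(g) \circ F(f) \cong F(g \circ f)$ must be shown to satisfy the pseudofunctor axioms, and this is precisely where the pentagon and triangle identities of $\mathcal{B}$ are consumed. I would handle this by tracking the relevant $2$-cells through the Yoneda equivalence and appealing to the coherence theorem for bicategories to suppress the bookkeeping, rather than expanding every pasting diagram by hand.
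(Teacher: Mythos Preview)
Your proposal is correct and is in fact the standard modern proof of the Strictification Theorem, but it takes a genuinely different route from the paper. The paper sketches a \emph{free-construction-then-quotient} approach: form the free strict 2-category on the underlying data of $\mathcal{B}$, adjoin the coherence isomorphisms as 2-cells, quotient by the relations forcing strict associativity and unitality, and then exhibit pseudofunctors witnessing biequivalence with $\mathcal{B}$. By contrast, you embed $\mathcal{B}$ into the strict 2-category $[\mathcal{B}^{\mathrm{op}},\mathbf{Cat}]$ via the bicategorical Yoneda pseudofunctor and take $\mathcal{B}^{\mathrm{str}}$ to be the full image on representables.

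Your Yoneda argument is cleaner and more conceptual: strictness of $\mathcal{B}^{\mathrm{str}}$ is inherited for free from strictness of $\mathbf{Cat}$, and the biequivalence is essentially the Yoneda lemma. The paper's approach is more syntactic and explicit, which has the advantage of making the strictified object concretely describable in terms of generators and relations (useful if one later wants to compute with it), but at the cost of heavier coherence bookkeeping in the quotient step. One caution worth flagging in your write-up: your final paragraph invokes ``the coherence theorem for bicategories'' to suppress the verification that $F$ is a pseudofunctor. Depending on how one states that coherence theorem, this can look circular, since coherence is often \emph{deduced} from strictification. You should either (a) make clear you mean the MacLane-style statement that every formal diagram of associators and unitors commutes, which can be proved directly by a rewriting argument independent of strictification, or (b) simply carry out the pentagon and triangle verifications for $F$ by hand---they are tedious but not deep.
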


This theorem states that while bicategories allow weakened versions of associativity and identity laws, these “weak” laws can be replaced by strict ones without losing any essential structure. In essence, \(\mathcal{B}^{\mathrm{str}}\) retains the same “shape” of \(\mathcal{B}\) up to coherent isomorphisms, but organizes it in a strictly associative and unital manner.

Intuitively, the proof leverages the idea that all coherence isomorphisms in a bicategory can be chosen in a canonical, systematic way, effectively “rigidifying” the weak aspects. These choices ensure that every associative or unit diagram commutes strictly in the new strict 2-category.

\begin{proof}
\renewcommand{\qedsymbol}{\(\blacksquare\)}
\noindent
\textbf{Constructive Proof Outline:}
\begin{enumerate}
  \item \textbf{Free Construction:}
  Begin by constructing a free 2-category on the underlying 1-category of \(\mathcal{B}\). This involves freely adding 2-morphisms corresponding to the bicategory’s 2-morphisms but without imposing any relations other than those strictly needed.
  \item \textbf{Adjoin Coherence Isomorphisms:}
  Next, incorporate the coherence isomorphisms from \(\mathcal{B}\) as 2-morphisms in the free construction, ensuring they satisfy strict associativity and identity laws in the resulting structure.
  \item \textbf{Identify Redundant 2-Morphisms:}
  Factor out any redundant relations by a quotient that identifies distinct 2-morphisms whenever they correspond to the same composition in \(\mathcal{B}\). This step ensures that all coherent compositions are identified appropriately.
  \item \textbf{Establish Equivalence:}
  Define pseudofunctors \(F\) and \(G\) between \(\mathcal{B}\) and the new strict 2-category \(\mathcal{B}^{\mathrm{str}}\). Construct pseudonatural transformations witnessing the required equivalence, verifying they satisfy all coherence conditions.
\end{enumerate}
Thus, we obtain \(\mathcal{B}^{\mathrm{str}}\) and the equivalence as claimed, completing the strictification procedure.
\end{proof}

\[
\begin{tikzcd}[column sep=large, row sep=large]
\mathcal{B} \arrow[r, bend left=25, "F"] \arrow[r, bend right=25, swap, ""]
& \mathcal{B}^{\mathrm{str}} \arrow[l, bend left=25, "G"] \arrow[l, bend right=25, swap, ""]
\end{tikzcd}
\]
Here, the unlabeled arrows represent the pseudonatural transformations that exhibit \(G \circ F\) and \(F \circ G\) as equivalences up to coherent isomorphism.

\newpage
\section{Overview of the Proposed Approach}

\subsection{Local Category Construction}
\label{subsec:local_category_construction}

In this section, we briefly outline the construction of \index{local category}local categories corresponding to each logical connective—namely, \index{negation}negation, \index{conjunction}conjunction, \index{disjunction}disjunction, and \index{implication}implication. Each local category is designed to encapsulate the unique universal property of its associated connective within a categorical framework.

\paragraph{Negation:}
The local category for negation is constructed by introducing a dualizing object. For any object \( A \) in the base category, its negation \(\lnot A\) is defined as the unique (up to isomorphism) object that satisfies the following universal property: there exists a canonical morphism from \( A \) to a fixed dualizing object \( D \) such that for any other object \( X \) with a morphism from \( A \) to \( D \), there is a unique mediating morphism \( \phi: \lnot A \to X \) making the corresponding diagram commute. This setup models logical negation through a contravariant functor preserving the adjunction structure.

\paragraph{Conjunction and Disjunction:}
For conjunction, the local category is structured around the universal property of \index{product}products. Given objects \( A \) and \( B \), their product \( A \times B \) is characterized by the existence of projection morphisms \( \pi_A: A \times B \to A \) and \( \pi_B: A \times B \to B \) such that for any object \( X \) with morphisms \( f: X \to A \) and \( g: X \to B \), there exists a unique mediating morphism \( \langle f, g \rangle: X \to A \times B \) satisfying \( \pi_A \circ \langle f, g \rangle = f \) and \( \pi_B \circ \langle f, g \rangle = g \). Dually, disjunction is modeled via \index{coproduct}coproducts: for objects \( A \) and \( B \), the coproduct \( A + B \) is defined by injection morphisms and a corresponding universal property that uniquely factors any pair of morphisms from \( A \) and \( B \) into a common codomain.

\paragraph{Implication:}
The local category for implication is developed in the context of a \index{cartesian closed category}cartesian closed category. Here, the exponential object \( B^A \) represents the implication from \( A \) to \( B \). The universal property of the exponential asserts that for every morphism \( f: C \times A \to B \), there is a unique morphism \( \tilde{f}: C \to B^A \) such that \( f \) factors through the evaluation map \( \mathrm{ev}: B^A \times A \to B \), i.e.,
\[
f = \mathrm{ev} \circ (\tilde{f} \times \mathrm{id}_A).
\]

\noindent
Collectively, these constructions form the building blocks for the global categorical framework. In later sections, we will describe how these local categories are integrated, ensuring that the universal properties associated with each logical connective are preserved in a coherent and structured manner.

\subsection{Extension to a 2-Category Framework}
\label{subsec:extension_2cat_framework}

To capture the full flexibility of the logical constructions, the local categories are extended to a 2-category framework. In this extension, not only are objects and morphisms considered, but also \index{2-morphism}2-morphisms, which represent natural isomorphisms between functors and mediate the coherence between different compositions.

In the 2-category extension, each local category is enriched as follows:
\begin{itemize}
  \item The objects remain as in the original local category.
  \item The morphisms are as previously defined, preserving the universal properties of the logical connectives.
  \item For any two parallel morphisms \( f, g: A \to B \), a 2-morphism \( \alpha: f \Rightarrow g \) is introduced to express the natural isomorphism between different ways of factorizing or composing the morphisms.
\end{itemize}

This additional layer of 2-morphisms allows us to relax the strict equality conditions to natural isomorphisms, thereby handling coherence issues more flexibly. For instance, given two compositional paths in the integrated category, the existence of a 2-morphism ensures that the diagrams commute up to a specified isomorphism. This is essential when integrating multiple local categories that may have non-strict interactions.

The following diagram schematically illustrates the idea:
\[
\xymatrix@C=6em@R=5em{
A \ar@/^1.5pc/[r]^{f} \ar@/_1.5pc/[r]_{g} \ar@{}[r]|{\Downarrow \alpha} & B \\
{\text{Local Category for Disjunction}} 
  \ar[ur]|-{\text{\ \ Coherent Integration}} & {} \\
{\text{Local Category for Implication}} 
  \ar[u]_-{\text{Dualities}} 
  \ar@/_2pc/[uur]|-{\text{Pseudo-colimit}} & {}
}
\]
Here, the 2-morphism \(\alpha: f \Rightarrow g\) indicates that the two morphisms \(f\) and \(g\) are naturally isomorphic, providing the necessary coherence for further compositions.

Overall, the extension to a 2-category framework plays a crucial role in ensuring that the integrated categorical structure accurately reflects the flexible semantics of logical connectives while maintaining rigorous coherence.

\subsection{2-Category Composition and Coherence Verification}
\label{subsec:2cat_composition_coherence}

In the 2-category framework, the composition of morphisms is governed by both horizontal and vertical compositions, and coherence conditions ensure that all the associativity and unit laws hold up to specified natural isomorphisms. In other words, while the composition is not strictly associative or unital, the existence of 2-morphisms (or natural isomorphisms) guarantees that different compositional pathways are coherently equivalent.

\paragraph{2-Category Composition:}
Given objects \( A \), \( B \), and \( C \) in the 2-category, consider two composable morphisms:
\[
f: A \to B \quad \text{and} \quad g: B \to C.
\]
Their horizontal composite \( g \circ f \) is defined up to a natural isomorphism. Furthermore, if there are two 2-morphisms
\[
\alpha: f \Rightarrow f' \quad \text{and} \quad \beta: g \Rightarrow g',
\]
the horizontal composition of these 2-morphisms, denoted \( \beta \ast \alpha \), provides a coherent isomorphism between \( g \circ f \) and \( g' \circ f' \).

\paragraph{Coherence Verification:}
To verify coherence, one must show that all possible compositions of 2-morphisms (arising from different association orders) are equivalent. For instance, consider three composable morphisms:
\[
f: A \to B, \quad g: B \to C, \quad h: C \to D.
\]
The associativity coherence condition demands the existence of a canonical 2-morphism 
\[
a_{f,g,h}: (h \circ g) \circ f \xRightarrow{\sim} h \circ (g \circ f)
\]
such that for any quadruple of composable morphisms, the corresponding coherence diagrams (such as the pentagon diagram) commute.

A schematic diagram that illustrates the coherence of two compositional paths is given by:
\[
\begin{tikzcd}[column sep=large, row sep=large]
& (h\circ g)\circ f \arrow[dr, "\alpha"'] \arrow[dd, "\scriptstyle a_{f,g,h}"'] & \\
h\circ (g\circ f) \arrow[ur, "\beta"] \arrow[dr, "\gamma"'] & & X \\
& h\circ (g\circ f) \arrow[ur, "\delta"'] &
\end{tikzcd}
\]
In this diagram, the 2-morphisms \(\alpha\), \(\beta\), \(\gamma\), and \(\delta\) serve to relate the two different composites, and coherence is verified by showing that the composite 2-morphism along the top equals that along the bottom.

\noindent
Overall, the 2-category composition and its coherence verification provide a robust framework for integrating local categories while ensuring that all structural isomorphisms maintain the desired universal properties.

\subsection{Strictification and Evaluation}
\label{subsec:strictification_evaluation}

This section addresses two interrelated aspects. First, we apply the strictification theorem to transform the integrated bicategory into a strict 2-category. Second, we evaluate the effectiveness of the constructed framework by examining concrete logical examples, specifically \index{Curryfication}Curryfication and \index{Dedekind-style reasoning}Dedekind-style reasoning.

\paragraph{Strictification Application:}
By invoking the Strictification Theorem (see Theorem \ref{thm:strictification}), we establish that every bicategory, including our integrated framework, is equivalent to a strict 2-category. This transformation ensures that associativity and unit conditions hold exactly, thereby simplifying further constructions and proofs within the system.

\paragraph{Evaluation via Logical Examples:}
To assess the practical relevance of our approach, we evaluate it using two fundamental logical transformations:
\begin{itemize}
  \item \textbf{Curryfication:} The process of transforming a multi-argument function into a sequence of single-argument functions is elegantly captured via the exponential objects in a cartesian closed category. This provides a clear categorical interpretation of Curryfication.
  \item \textbf{Dedekind-style Reasoning:} Often associated with the analysis of order and completeness properties, this reasoning is modeled within our framework through the universal properties of coproducts and limits. Such an approach demonstrates the framework's capability to represent and manipulate complex logical constructs.
\end{itemize}

\noindent
Collectively, the strictification and evaluation steps not only validate the theoretical foundations of the integrated categorical structure but also illustrate its potential for unifying diverse logical constructs under a single, coherent paradigm.

\newpage
\section{Main Contributions}
\begin{itemize}
  \item A systematic construction of local categories corresponding to logical connectives.
  \item Extension of these local structures to a 2-category framework to flexibly capture universal properties via natural isomorphisms.
  \item A novel method for integrating local categories through 2-categorical composition (including pseudo-limits and pseudo-colimits) ensuring coherence.
  \item Application of strictification techniques to obtain a strict 2-category equivalent, thereby rigorously preserving universality and coherence.
  \item Evaluation of the integrated framework via concrete logical examples such as currying and the Deduction Theorem.
\end{itemize}

\section{Organization of the Thesis}
This thesis is organized as follows:
\begin{itemize}
  \item \textbf{Chapter 2} reviews the necessary background in category theory, logical connectives, and higher category theory.
  \item \textbf{Chapter 3} presents the detailed construction of local categories corresponding to various logical connectives.
  \item \textbf{Chapter 4} extends these local categories to a 2-category framework, introducing natural isomorphisms.
  \item \textbf{Chapter 5} discusses the 2-categorical composition techniques used to integrate the local categories.
  \item \textbf{Chapter 6} rigorously verifies the coherence conditions within the integrated category.
  \item \textbf{Chapter 7} applies strictification techniques to reconstitute the integrated structure into a strict 2-category.
  \item \textbf{Chapter 8} evaluates the proposed framework with concrete logical examples.
  \item \textbf{Chapter 9} concludes the thesis and outlines future research directions.
\end{itemize}

\chapter{Preliminaries and Theoretical Background}
\section{Basic Concepts in Category Theory}
\subsection{Objects, Morphisms, and Composition}
\label{subsec:objects_morphisms_composition}

Categories are foundational structures in mathematics that consist of a collection of objects and morphisms (or arrows) between these objects. The composition of these morphisms, along with the existence of identity arrows, is governed by specific axioms that ensure the structure's consistency.

\begin{defn}
\label{defn:category}
A \index{category} \emph{category} \(\mathcal{C}\) consists of:
\begin{enumerate}
  \item A class of \index{object} \emph{objects}, denoted by \(\operatorname{Ob}(\mathcal{C})\).
  \item For every pair of objects \(A, B \in \operatorname{Ob}(\mathcal{C})\), a set of \index{morphism} \emph{morphisms} from \(A\) to \(B\), denoted by \(\operatorname{Hom}_{\mathcal{C}}(A, B)\).
  \item For every object \(A \in \operatorname{Ob}(\mathcal{C})\), an \emph{identity morphism} \(\mathrm{id}_A \in \operatorname{Hom}_{\mathcal{C}}(A, A)\).
  \item A binary operation called \emph{composition}, which assigns to each pair of morphisms 
  \[
  f \in \operatorname{Hom}_{\mathcal{C}}(A, B) \quad \text{and} \quad g \in \operatorname{Hom}_{\mathcal{C}}(B, C)
  \]
  a morphism
  \[
  g \circ f \in \operatorname{Hom}_{\mathcal{C}}(A, C).
  \]
\end{enumerate}
These data are required to satisfy the following axioms:
\begin{enumerate}
  \item \textbf{Associativity:} For any morphisms
  \[
  f \in \operatorname{Hom}_{\mathcal{C}}(A, B),\quad g \in \operatorname{Hom}_{\mathcal{C}}(B, C),\quad \text{and} \quad h \in \operatorname{Hom}_{\mathcal{C}}(C, D),
  \]
  we have
  \[
  h \circ (g \circ f) = (h \circ g) \circ f.
  \]
  \item \textbf{Identity:} For every morphism \( f \in \operatorname{Hom}_{\mathcal{C}}(A, B) \), the following equalities hold:
  \[
  \mathrm{id}_B \circ f = f \quad \text{and} \quad f \circ \mathrm{id}_A = f.
  \]
\end{enumerate}
\end{defn}

This definition encapsulates the essence of categorical structure, ensuring that the composition of morphisms is well-defined and that each object acts as an identity element with respect to composition.

\noindent
Intuitively, objects represent entities (such as sets, spaces, or algebraic structures), while morphisms represent structure-preserving mappings between these entities. The associativity and identity conditions guarantee that the process of composing morphisms is consistent, thereby enabling abstract and coherent mathematical reasoning within the category.

\subsection{Universal Properties}
\label{subsec:universal_properties}

Universal constructions are central in category theory, as they provide a systematic way to characterize objects by their mapping properties. In particular, constructions such as \index{product}products, \index{coproduct}coproducts, and \index{exponential object}exponentiation capture the essence of logical connectives within a categorical framework.

\begin{defn}
\label{defn:product}
Let \(\mathcal{C}\) be a category and \(A, B\) objects in \(\mathcal{C}\). A \emph{product} of \(A\) and \(B\) is an object \(P\) together with a pair of morphisms
\[
\pi_A: P \to A \quad \text{and} \quad \pi_B: P \to B,
\]
such that for any object \(X\) with morphisms \(f: X \to A\) and \(g: X \to B\), there exists a unique morphism \(\langle f, g \rangle: X \to P\) making the diagram commute:
\[
\xymatrix@R=5em@C=5em{
X 
  \ar@/^1.5pc/[drr]^-{g} 
  \ar[dr]^(0.6){\langle f, g \rangle} 
  \ar@/_1.5pc/[ddr]_-{f} & & \\
& P \ar[r]^-{\pi_B} \ar[d]_-{\pi_A} & B \\
& A &
}
\]
\end{defn}

\begin{defn}
\label{defn:coproduct}
Dually, a \emph{coproduct} of \(A\) and \(B\) is an object \(Q\) together with a pair of injection morphisms
\[
\iota_A: A \to Q \quad \text{and} \quad \iota_B: B \to Q,
\]
such that for any object \(X\) with morphisms \(f: A \to X\) and \(g: B \to X\), there exists a unique morphism \([f, g]: Q \to X\) making the following diagram commute:
\[
\begin{tikzcd}
A \arrow[r, "\iota_A"] \arrow[dr, "f"'] 
  & Q \arrow[d, dashed, "{[f, g]}"] 
  & B \arrow[l, "\iota_B"'] \arrow[dl, "g"] \\
& X &
\end{tikzcd}
\]
\end{defn}

\begin{defn}
\label{defn:exponential}
Let \(\mathcal{C}\) be a cartesian closed category and let \(A, B\) be objects in \(\mathcal{C}\). An \emph{exponential object} \(B^A\) is an object together with an \emph{evaluation} morphism
\[
\mathrm{ev}: B^A \times A \to B,
\]
satisfying the universal property that for any object \(X\) and morphism \(f: X \times A \to B\), there exists a unique morphism \(\tilde{f}: X \to B^A\) such that
\[
f = \mathrm{ev} \circ (\tilde{f} \times \mathrm{id}_A).
\]
\end{defn}

\paragraph{Logical Connectives and Universal Properties:}
These universal constructions correspond to logical connectives in the following manner:
\begin{itemize}
  \item \textbf{Conjunction:} The product \(A \times B\) models logical conjunction (\(\land\)) since it provides the unique means to combine information from \(A\) and \(B\).
  \item \textbf{Disjunction:} The coproduct \(A + B\) serves as a categorical analogue of logical disjunction (\(\lor\)), where the injection morphisms represent the inclusion of each alternative.
  \item \textbf{Implication:} The exponential object \(B^A\) reflects the logical implication (\(\to\)) by encapsulating the idea of a function space from \(A\) to \(B\), corresponding to a transformation that preserves the structure of arguments.
\end{itemize}

\noindent
Thus, universal properties provide a robust and abstract framework for interpreting logical operations in categorical semantics, where the focus lies on the uniqueness and existence of mediating morphisms that make diagrams commute.

\subsection{Examples and Motivating Cases}
\label{subsec:examples_motivating_cases}

In this subsection, we present several classical examples that illustrate the abstract concepts developed in this work. These examples not only ground the theoretical framework but also demonstrate its applicability across different logical and mathematical settings.

\paragraph{Example 1: The Category of Sets}
The category \(\mathbf{Set}\) is the prototypical example where:
\begin{itemize}
  \item Objects are sets.
  \item Morphisms are functions between sets.
  \item The product \(A \times B\) corresponds to the Cartesian product, modeling logical conjunction.
  \item The coproduct \(A \sqcup B\) (disjoint union) models logical disjunction.
  \item The exponential object \(B^A\), which is the set of all functions from \(A\) to \(B\), provides a concrete interpretation of logical implication.
\end{itemize}
This example serves as a foundation, highlighting how universal properties naturally arise in familiar mathematical contexts.

\paragraph{Example 2: Cartesian Closed Categories}
Beyond \(\mathbf{Set}\), any \index{cartesian closed category}cartesian closed category furnishes a setting where the exponential object captures the notion of implication. For instance, in the category of finite sets or even in the category of small categories, the exponential \(B^A\) models the space of morphisms from \(A\) to \(B\), which is central to Curryfication—a process that underpins many constructions in type theory and lambda calculus.

\paragraph{Example 3: Topoi and Internal Logic}
A \index{topos}topos is a category that resembles \(\mathbf{Set}\) but comes equipped with an internal logic. In a topos:
\begin{itemize}
  \item Logical formulas can be interpreted as objects.
  \item Logical connectives are represented through categorical constructions such as limits (for conjunction), colimits (for disjunction), and exponentiation (for implication).
  \item The internal language of a topos allows one to seamlessly translate logical reasoning into categorical terms.
\end{itemize}
This framework shows the deep connection between categorical structures and logic, reinforcing the idea that universal constructions are central to understanding logical semantics.

\paragraph{Motivating Cases}
These examples motivate the need for an integrated categorical framework by demonstrating that:
\begin{itemize}
  \item Universal constructions (products, coproducts, and exponentials) in \(\mathbf{Set}\) and related categories naturally model logical connectives.
  \item Cartesian closed categories and topoi extend these ideas to richer, more expressive logical systems.
  \item An abstract treatment using local categories, their integration, and subsequent strictification captures the essential features observed in these classical examples, thus unifying diverse mathematical theories under a common framework.
\end{itemize}

In summary, the classical examples discussed here provide both intuition and motivation for the abstract constructions developed in this paper, highlighting the role of universal properties in bridging logic and category theory.

\newpage
\section{Categorical Semantics of Logical Connectives}
\subsection{Interpretation of Negation, Conjunction, and Disjunction}
\label{subsec:interpretation_logic_connectives}

The categorical interpretation of logical connectives provides a natural semantic framework where each connective is modeled via a universal construction. In this subsection, we explain how negation, conjunction, and disjunction are interpreted within a categorical setting.

\paragraph{Negation:}  
Negation is often modeled through the use of a dualizing object in a closed category. For an object \(A\) in a category \(\mathcal{C}\), its negation, denoted \(\lnot A\), is characterized by an adjunction with a fixed dualizing object \(D\). More precisely, there exists a natural isomorphism
\[
\operatorname{Hom}_{\mathcal{C}}(A, D) \cong \operatorname{Hom}_{\mathcal{C}}(1, \lnot A),
\]
where \(1\) denotes the terminal object in \(\mathcal{C}\). This formulation captures the notion that negation, much like logical complement, reverses the direction of morphisms in a manner compatible with the adjunction structure.

\paragraph{Conjunction:}  
Conjunction corresponds to the \index{product}product in category theory. Given two objects \(A\) and \(B\) in \(\mathcal{C}\), their product \(A \times B\) is defined by the existence of projection morphisms
\[
\pi_A: A \times B \to A \quad \text{and} \quad \pi_B: A \times B \to B,
\]
which satisfy the following universal property: for any object \(X\) with morphisms \(f: X \to A\) and \(g: X \to B\), there exists a unique mediating morphism \(\langle f, g \rangle: X \to A \times B\) such that
\[
\pi_A \circ \langle f, g \rangle = f \quad \text{and} \quad \pi_B \circ \langle f, g \rangle = g.
\]
This unique factorization mirrors the logical "and" operation, as it combines the information from both \(A\) and \(B\) into a single object.

\paragraph{Disjunction:}  
Dually, disjunction is modeled by the \index{coproduct}coproduct. For objects \(A\) and \(B\) in \(\mathcal{C}\), the coproduct \(A + B\) is equipped with injection morphisms
\[
\iota_A: A \to A + B \quad \text{and} \quad \iota_B: B \to A + B.
\]
The universal property of the coproduct states that for any object \(X\) and morphisms \(f: A \to X\) and \(g: B \to X\), there exists a unique morphism \([f, g]: A + B \to X\) such that
\[
[f, g] \circ \iota_A = f \quad \text{and} \quad [f, g] \circ \iota_B = g.
\]
This construction captures the essence of logical "or" by providing a mechanism to include either \(A\) or \(B\) into a larger structure.

\noindent
Collectively, these interpretations illustrate how the abstract notion of universal properties serves to model logical connectives in a coherent and structured manner within category theory.

\subsection{Implication and Exponential Objects}
\label{subsec:implication_exponential}

In a \index{cartesian closed category}cartesian closed category, logical implication is interpreted through the concept of \emph{exponential objects}. For any objects \(A\) and \(B\) in such a category \(\mathcal{C}\), the exponential object \(B^A\) can be viewed as the categorical analogue of the function space from \(A\) to \(B\).

\begin{defn}
Let \(\mathcal{C}\) be a cartesian closed category and let \(A, B \in \operatorname{Ob}(\mathcal{C})\). An \emph{exponential object} \(B^A\) is an object together with an \emph{evaluation map}
\[
\mathrm{ev}: B^A \times A \to B,
\]
which satisfies the following universal property: For every object \(X \in \operatorname{Ob}(\mathcal{C})\) and every morphism 
\[
f: X \times A \to B,
\]
there exists a unique morphism 
\[
\tilde{f}: X \to B^A
\]
such that the following diagram commutes:
\[
\begin{tikzcd}[column sep=large, row sep=large]
X \times A \arrow[r, "f"] \arrow[d, "\tilde{f}\times\mathrm{id}_A"'] & B \\
B^A \times A \arrow[ur, "\mathrm{ev}"'] &
\end{tikzcd}
\]
\end{defn}

\paragraph{Interpretation:}
This definition implies that to give a morphism \(f: X \times A \to B\) is equivalent to giving a morphism \(\tilde{f}: X \to B^A\). In logical terms, this is analogous to the process of currying in lambda calculus, where a function taking two arguments can be transformed into a function returning another function. Hence, the exponential object \(B^A\) models the logical implication \(A \to B\), capturing the idea that implication corresponds to forming a space of functions from \(A\) to \(B\).

Thus, exponential objects not only provide a framework for understanding functional abstraction in categorical terms but also establish a direct correspondence between logical implication and categorical structure.

\subsection{Universal Properties in Logical Frameworks}
\label{subsec:universal_properties_logical}

Universal properties serve as the backbone of categorical semantics, providing an abstract and robust means to interpret logical connectives. In categorical logic, the meaning of a connective is determined by a universal construction that characterizes it uniquely up to isomorphism.

\paragraph{Conjunction via Products:}  
Consider the product of two objects \(A\) and \(B\) in a category \(\mathcal{C}\). The product \(A \times B\) comes equipped with projection maps
\[
\pi_A: A \times B \to A \quad \text{and} \quad \pi_B: A \times B \to B.
\]
The universal property of the product states that for any object \(X\) with morphisms \(f: X \to A\) and \(g: X \to B\), there exists a unique mediating morphism \(\langle f, g \rangle: X \to A \times B\) such that
\[
\pi_A \circ \langle f, g \rangle = f \quad \text{and} \quad \pi_B \circ \langle f, g \rangle = g.
\]
This uniqueness condition mirrors the logical notion that the truth of the conjunction \(A \land B\) is determined precisely by the simultaneous truth of \(A\) and \(B\).

\paragraph{Disjunction via Coproducts:}  
Dually, the coproduct \(A + B\) models logical disjunction. It is defined by the existence of injection maps
\[
\iota_A: A \to A + B \quad \text{and} \quad \iota_B: B \to A + B,
\]
which satisfy the universal property: for any object \(X\) and morphisms \(f: A \to X\) and \(g: B \to X\), there exists a unique morphism \([f, g]: A + B \to X\) such that
\[
[f, g] \circ \iota_A = f \quad \text{and} \quad [f, g] \circ \iota_B = g.
\]
This reflects the intuition that the disjunction \(A \lor B\) holds if at least one of \(A\) or \(B\) is true, with the unique mediating morphism providing a canonical method to combine evidence from either side.

\paragraph{Implication via Exponential Objects:}  
In cartesian closed categories, the exponential object \(B^A\) captures the essence of logical implication. The universal property of the exponential states that for every object \(X\) and every morphism 
\[
f: X \times A \to B,
\]
there exists a unique morphism 
\[
\tilde{f}: X \to B^A
\]
such that the following diagram commutes:
\[
\begin{tikzcd}[column sep=large, row sep=large]
X \times A \arrow[r, "f"] \arrow[d, "\tilde{f}\times \mathrm{id}_A"'] & B \\
B^A \times A \arrow[ur, "\mathrm{ev}"'] &
\end{tikzcd}
\]
This property aligns with the Curryfication process, where implication is understood as a transformation that converts a function of two arguments into a function returning another function. In logical terms, \(B^A\) represents the set (or object) of proofs of the implication \(A \to B\).

\paragraph{Summary:}  
The universal properties of products, coproducts, and exponential objects provide canonical constructions that underlie the semantics of the logical connectives conjunction, disjunction, and implication, respectively. These constructions ensure that the meaning of a logical connective is captured by the existence and uniqueness of mediating morphisms, thereby endowing the logical framework with a precise and structurally robust semantic interpretation.

\newpage
\section{Introduction to Higher Category Theory}
\subsection{2-Categories and Bicategories}
\label{subsec:2cat_bicat}

\begin{defn}
\label{defn:2-category}
A \emph{2-category} \(\mathcal{C}\) consists of:
\begin{enumerate}
  \item A class of \index{object} objects, denoted by \(\operatorname{Ob}(\mathcal{C})\).
  \item For every pair of objects \(A,B \in \operatorname{Ob}(\mathcal{C})\), a category \(\mathcal{C}(A,B)\) whose objects are called \index{1-morphism} 1-morphisms from \(A\) to \(B\) and whose morphisms are called \index{2-morphism} 2-morphisms.
  \item For every triple of objects \(A,B,C \in \operatorname{Ob}(\mathcal{C})\), a composition functor
  \[
  \circ: \mathcal{C}(B,C) \times \mathcal{C}(A,B) \longrightarrow \mathcal{C}(A,C),
  \]
  which is strictly associative and unital; that is, there exist identity 1-morphisms \(\mathrm{id}_A \in \mathcal{C}(A,A)\) such that for any 1-morphism \(f \in \mathcal{C}(A,B)\),
  \[
  f \circ \mathrm{id}_A = f = \mathrm{id}_B \circ f,
  \]
  and the horizontal composition of 2-morphisms satisfies the interchange law with vertical composition.
\end{enumerate}
\end{defn}

\begin{defn}
\label{defn:bicategory}
A \emph{bicategory} \(\mathcal{B}\) consists of:
\begin{enumerate}
  \item A class of \index{object} objects.
  \item For each pair of objects \(A,B\), a category \(\mathcal{B}(A,B)\) of 1-morphisms and 2-morphisms.
  \item For every triple of objects \(A,B,C\), a composition functor
  \[
  \circ: \mathcal{B}(B,C) \times \mathcal{B}(A,B) \longrightarrow \mathcal{B}(A,C),
  \]
  which is associative and unital only up to specified natural isomorphisms. In particular, there exist:
  \begin{itemize}
    \item An \index{associator}associator isomorphism
    \[
    a_{f,g,h}: (h \circ g) \circ f \xrightarrow{\sim} h \circ (g \circ f),
    \]
    for any three composable 1-morphisms \(f: A \to B\), \(g: B \to C\), and \(h: C \to D\).
    \item \index{left unitor}Left and \index{right unitor}right unitor isomorphisms
    \[
    l_f: \mathrm{id}_B \circ f \xrightarrow{\sim} f \quad \text{and} \quad r_f: f \circ \mathrm{id}_A \xrightarrow{\sim} f,
    \]
    for each 1-morphism \(f: A \to B\).
  \end{itemize}
These isomorphisms are required to satisfy the standard \index{pentagon identity}pentagon and \index{triangle identity}triangle coherence conditions.
\end{enumerate}
\end{defn}

\subsection{2-Morphisms and Natural Transformations}
\label{subsec:2morphisms_natural_transformations}

In a 2-category (or bicategory), the notion of morphisms is enriched by the introduction of 2-morphisms. These are arrows between 1-morphisms that provide a way to express the idea of “morphisms between morphisms”. A typical example of 2-morphisms in many familiar categories is given by natural transformations.

\paragraph{2-Morphisms:}
Let \( f, g: A \to B \) be two 1-morphisms in a 2-category \(\mathcal{C}\). A 2-morphism
\[
\alpha: f \Rightarrow g
\]
is an arrow in the hom-category \(\mathcal{C}(A,B)\) that relates \(f\) and \(g\). These 2-morphisms allow us to compare different ways of mapping from \(A\) to \(B\), and they come equipped with two kinds of compositions:
\begin{itemize}
  \item \textbf{Vertical composition:} Given 2-morphisms \(\alpha: f \Rightarrow g\) and \(\beta: g \Rightarrow h\), their vertical composite is a 2-morphism \(\beta \cdot \alpha: f \Rightarrow h\).
  \item \textbf{Horizontal composition:} Given 2-morphisms \(\alpha: f \Rightarrow f'\) (where \(f, f': A \to B\)) and \(\beta: g \Rightarrow g'\) (where \(g, g': B \to C\)), their horizontal composite is a 2-morphism \(\beta \ast \alpha: g \circ f \Rightarrow g' \circ f'\).
\end{itemize}

\paragraph{Natural Transformations as 2-Morphisms:}
In the 2-category \(\mathbf{Cat}\), whose objects are (small) categories, 1-morphisms are functors, and 2-morphisms are natural transformations, the above concepts become concrete. For two functors \(F, G: \mathcal{A} \to \mathcal{B}\), a natural transformation \(\eta: F \Rightarrow G\) assigns to each object \(A \in \mathcal{A}\) a morphism \(\eta_A: F(A) \to G(A)\) in \(\mathcal{B}\) such that for every morphism \(f: A \to A'\) in \(\mathcal{A}\), the following diagram commutes:
\[
\begin{tikzcd}[column sep=large, row sep=large]
F(A) \arrow[r, "F(f)"] \arrow[d, "\eta_A"'] & F(A') \arrow[d, "\eta_{A'}"] \\
G(A) \arrow[r, "G(f)"'] & G(A')
\end{tikzcd}
\]
Here, \(\eta\) is the 2-morphism between the functors \(F\) and \(G\).

\paragraph{Modifications:}
Beyond natural transformations, when dealing with higher structures, one may also consider \emph{modifications}. Given two natural transformations \(\eta, \theta: F \Rightarrow G\) between functors \(F, G: \mathcal{A} \to \mathcal{B}\), a modification \(\mu: \eta \Rrightarrow \theta\) is a family of 2-morphisms (typically identities in \(\mathbf{Cat}\)) that provide a higher level of coherence between \(\eta\) and \(\theta\). While modifications are less commonly discussed in elementary texts, they become essential in the study of higher category theory to ensure the full coherence of transformations at all levels.

\noindent
Overall, 2-morphisms like natural transformations (and modifications, in more complex settings) play a vital role in expressing how various compositional structures in a 2-category or bicategory are related, ensuring that different paths of composition are coherently isomorphic.

\subsection{Pseudo-Limits and Pseudo-Colimits}
\label{subsec:pseudo_limits_colimits}

In higher category theory, strict limits and colimits are often too rigid to capture the full behavior of diagrams. Instead, one uses \emph{pseudo-limits} and \emph{pseudo-colimits}, where the required commutativity holds only up to coherent isomorphism.

\begin{defn}
Let \(D: \mathcal{J} \to \mathcal{C}\) be a 2-functor from an index category \(\mathcal{J}\) to a 2-category \(\mathcal{C}\). A \emph{pseudo-limit} of \(D\) is an object \(L \in \operatorname{Ob}(\mathcal{C})\) equipped with:
\begin{itemize}
  \item A family of 1-morphisms \(\{\pi_j: L \to D(j)\}_{j \in \operatorname{Ob}(\mathcal{J})}\),
  \item For every morphism \(u: j \to k\) in \(\mathcal{J}\), an invertible 2-morphism
  \[
  \theta_u: \pi_k \Longrightarrow D(u) \circ \pi_j,
  \]
\end{itemize}
such that for any other object \(X\) with 1-morphisms \(\{f_j: X \to D(j)\}\) and invertible 2-morphisms \(\{\phi_u: f_k \Longrightarrow D(u) \circ f_j\}\) satisfying analogous coherence conditions, there exists a unique (up to unique isomorphism) 1-morphism \(u: X \to L\) together with invertible 2-morphisms making all the induced diagrams commute.
\end{defn}

\begin{defn}
Dually, given a 2-functor \(D: \mathcal{J} \to \mathcal{C}\), a \emph{pseudo-colimit} of \(D\) is an object \(C \in \operatorname{Ob}(\mathcal{C})\) together with:
\begin{itemize}
  \item A family of 1-morphisms \(\{\iota_j: D(j) \to C\}_{j \in \operatorname{Ob}(\mathcal{J})}\),
  \item For every morphism \(u: j \to k\) in \(\mathcal{J}\), an invertible 2-morphism
  \[
  \psi_u: \iota_k \Longrightarrow \iota_j \circ D(u),
  \]
\end{itemize}
such that for any other object \(X\) receiving a compatible cone from \(D\), there exists a unique 1-morphism \(v: C \to X\) (up to unique isomorphism) making the entire structure commute up to coherent isomorphism.
\end{defn}

\paragraph{Discussion:}  
The key difference between strict and pseudo-limits (or colimits) is that the universal property is required to hold only \emph{up to a coherent isomorphism} rather than on the nose. This relaxation is critical in higher categories, where many natural constructions fail to be strictly associative or unital but can be made so up to coherent equivalence.

A schematic diagram representing the universal property of a pseudo-limit is given by:
\[
\xymatrix@R=5em@C=5em{
X 
  \ar@/^1.5pc/[drr]^-{f_j} 
  \ar[dr]|-{u} 
  \ar@/_1.5pc/[ddr]_-{f_k} & & \\
& L \ar[r]_-{\pi_j} \ar[d]_-{\pi_k} & D(j) \\
& D(k) &
}
\]
Here, the existence of coherent invertible 2-morphisms between the different compositions ensures that the universal property holds in a flexible (or pseudo) sense.

\noindent
Overall, pseudo-limits and pseudo-colimits provide the appropriate generalization of universal constructions in settings where strict equalities are too constraining, thereby capturing the inherent flexibility of higher categorical structures.

\newpage
\section{Coherence Issues and Strictification}
\subsection{Coherence in Weak Categorical Structures}
\label{subsec:coherence_weak_structures}

In bicategories and other weak 2-categories, composition of 1-morphisms and 2-morphisms is defined only up to specified natural isomorphisms. Unlike strict 2-categories, where associativity and unit laws hold on the nose, weak structures rely on coherence data to ensure that various compositional pathways yield equivalent results.

\paragraph{Challenges:}
\begin{itemize}
  \item \textbf{Non-strict Associativity:} The composition of 1-morphisms is only associative up to an associator isomorphism 
  \[
  a_{f,g,h}: (h \circ g) \circ f \xrightarrow{\sim} h \circ (g \circ f),
  \]
  which must satisfy the \emph{pentagon identity} for any four composable 1-morphisms.
  \item \textbf{Unit Laws Up to Isomorphism:} The left and right unit laws are satisfied only up to natural isomorphisms (the unitors)
  \[
  l_f: \mathrm{id}_B \circ f \xrightarrow{\sim} f \quad \text{and} \quad r_f: f \circ \mathrm{id}_A \xrightarrow{\sim} f,
  \]
  which are required to satisfy the \emph{triangle identity} in conjunction with the associator.
  \item \textbf{Complexity of Coherence Data:} As the categorical structure becomes more intricate (with modifications and higher morphisms), the number and complexity of coherence conditions increase, making it challenging to verify that all diagrams commute.
  \item \textbf{Implications for Universal Constructions:} The flexibility of weak structures can complicate the formulation and verification of universal properties, since the usual uniqueness conditions are replaced by uniqueness up to coherent isomorphism.
\end{itemize}

\paragraph{Approaches to Managing Coherence:}
\begin{itemize}
  \item \emph{Strictification Theorems:} It is often possible to replace a weak 2-category or bicategory with a strictly associative one (i.e., a strict 2-category) without loss of essential structure. For example, every bicategory is biequivalent to a strict 2-category.
  \item \emph{Coherence Theorems:} Coherence theorems, such as Mac Lane's Coherence Theorem for monoidal categories, ensure that all diagrams built from the associators and unitors commute. Such results provide a powerful tool to simplify reasoning in weak categorical settings.
\end{itemize}

\noindent
In summary, while weak categorical structures offer greater flexibility, they require intricate coherence conditions to maintain consistency. Addressing these challenges is crucial for applying higher category theory to areas such as categorical logic and the semantics of logical connectives.

\subsection{The Strictification Theorem}
\label{subsec:strictification_theorem}

\begin{thm}[Strictification Theorem]
Every bicategory \(\mathcal{B}\) is biequivalent to a strict 2-category \(\mathcal{B}^{\mathrm{str}}\). In other words, there exists a strict 2-category \(\mathcal{B}^{\mathrm{str}}\) and a pair of pseudofunctors
\[
F: \mathcal{B} \longrightarrow \mathcal{B}^{\mathrm{str}}, \quad G: \mathcal{B}^{\mathrm{str}} \longrightarrow \mathcal{B},
\]
together with pseudonatural equivalences
\[
G \circ F \simeq \operatorname{Id}_{\mathcal{B}} \quad \text{and} \quad F \circ G \simeq \operatorname{Id}_{\mathcal{B}^{\mathrm{str}}},
\]
which satisfy the requisite coherence conditions.
\end{thm}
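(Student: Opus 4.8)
The plan is to prove the theorem by the bicategorical Yoneda embedding, which realises $\mathcal{B}$ inside a presheaf 2-category that is automatically strict. Concretely, I would work with the 2-category $\mathrm{Psd}(\mathcal{B}^{\mathrm{op}}, \mathbf{Cat})$ whose objects are pseudofunctors $\mathcal{B}^{\mathrm{op}} \to \mathbf{Cat}$, whose 1-morphisms are pseudonatural transformations, and whose 2-morphisms are modifications. The decisive structural fact, which I would establish first, is that because the target $\mathbf{Cat}$ is a \emph{strict} 2-category, composition of pseudonatural transformations is strictly associative and strictly unital: the component-wise composites and their naturality 2-cells paste together using only the strict associativity and interchange law of $\mathbf{Cat}$, so no nontrivial associator survives at the level of $\mathrm{Psd}(\mathcal{B}^{\mathrm{op}}, \mathbf{Cat})$. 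Hence this large 2-category is strict on the nose.

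Next I would introduce the Yoneda pseudofunctor $y : \mathcal{B} \to \mathrm{Psd}(\mathcal{B}^{\mathrm{op}}, \mathbf{Cat})$ sending an object $A$ to the representable $\mathcal{B}(-, A)$ and a 1-morphism $f : A \to B$ to post-composition. Its pseudofunctoriality data, namely the comparison 2-cells for composition and for units, are built directly from the associators and unitors of $\mathcal{B}$, and the pentagon and triangle coherence conditions of $\mathcal{B}$ are precisely what make $y$ a well-defined pseudofunctor. I would then invoke the bicategorical Yoneda lemma to show that $y$ is \emph{locally an equivalence}: for all $A, B$ the functor $\mathcal{B}(A, B) \to \mathrm{Psd}(\mathcal{B}^{\mathrm{op}}, \mathbf{Cat})(yA, yB)$ is an equivalence of categories. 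Defining $\mathcal{B}^{\mathrm{str}}$ to be the full sub-2-category spanned by the essential image of $y$ (the representables), I obtain a strict 2-category, since any full sub-2-category of a strict 2-category is itself strict.

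It then remains to package $y$ as a biequivalence. By construction $y$ is locally an equivalence and essentially surjective onto $\mathcal{B}^{\mathrm{str}}$, which is exactly the criterion for a biequivalence; I would set $F := y$ corestricted to $\mathcal{B}^{\mathrm{str}}$ and produce $G$ by choosing, for each representable, a pseudo-inverse together with the adjoint-equivalence data supplied by the local equivalences. The pseudonatural equivalences $G \circ F \simeq \mathrm{Id}_{\mathcal{B}}$ and $F \circ G \simeq \mathrm{Id}_{\mathcal{B}^{\mathrm{str}}}$ are then assembled from these local pseudo-inverses, and the modification axioms witnessing the equivalence follow from the naturality of the Yoneda equivalence in $A$ and $B$.

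The main obstacle I anticipate is concentrated in the first two steps. First, verifying rigorously that $\mathrm{Psd}(\mathcal{B}^{\mathrm{op}}, \mathbf{Cat})$ is strict requires a careful pasting-diagram computation showing that the naturality 2-cells of composite pseudonatural transformations associate strictly; this is where strictness of the target is genuinely used, and it is easy to under-state the bookkeeping. Second, the bicategorical Yoneda lemma must itself be proved or cited with full coherence, since the local equivalence carries nontrivial naturality data that feeds directly into the construction of $G$. By contrast, the free-then-quotient strategy sketched earlier for Theorem~\ref{thm:strictification} is more elementary to state but harder to make precise, because controlling the quotient so that it identifies exactly the coherence-equivalent 2-cells, and no more, is delicate; I would therefore favour the Yoneda route, where strictness comes for free from the ambient presheaf 2-category.
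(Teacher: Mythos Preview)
Your proposal is correct and follows the Yoneda-embedding route, which is genuinely different from the paper's argument. The paper sketches a free-then-quotient construction: build the free strict 2-category on the underlying data of \(\mathcal{B}\), adjoin the associators and unitors as 2-cells, then quotient by the relations forcing the coherence isomorphisms to become identities, and finally exhibit pseudofunctors back and forth witnessing the biequivalence (with a citation to Gordon--Power--Street for details). Your approach instead embeds \(\mathcal{B}\) into \(\mathrm{Psd}(\mathcal{B}^{\mathrm{op}},\mathbf{Cat})\), exploits the strictness of the target \(\mathbf{Cat}\) to get a strict ambient 2-category for free, and then carves out \(\mathcal{B}^{\mathrm{str}}\) as the full sub-2-category on representables; the bicategorical Yoneda lemma supplies the local equivalences needed to promote \(y\) to a biequivalence. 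The Yoneda route is cleaner and more self-verifying: once the pasting computation for strictness of the presheaf 2-category and the bicategorical Yoneda lemma are in hand, no further control over identifications is needed. The paper's free-then-quotient approach is more constructive in spirit but, as you rightly flag, is delicate precisely at the quotient step, where one must ensure that exactly the coherence-equivalent 2-cells are identified and nothing more; the paper does not carry this out in detail and defers to the literature.
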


\begin{proof}
A complete proof of the strictification theorem involves the following key steps:
\begin{enumerate}
    \item \textbf{Free Construction:} Construct the free strict 2-category on the underlying data of \(\mathcal{B}\).
    \item \textbf{Imposition of Coherence:} Impose the coherence isomorphisms (associators and unitors) of \(\mathcal{B}\) as equations by taking an appropriate quotient of the free 2-category. This forces the associativity and identity constraints to hold strictly.
    \item \textbf{Establishing Biequivalence:} Define pseudofunctors \(F: \mathcal{B} \to \mathcal{B}^{\mathrm{str}}\) and \(G: \mathcal{B}^{\mathrm{str}} \to \mathcal{B}\) and construct pseudonatural transformations that show \(G \circ F\) and \(F \circ G\) are each equivalent to the respective identity functors.
\end{enumerate}
For a detailed treatment, see \cite{GordonPowerStreet1995}.
\end{proof}

\paragraph{Discussion:}
The strictification theorem is fundamental because it allows one to work within a strictly associative framework without losing the essential structure of the original bicategory. This result greatly simplifies the verification of coherence conditions and the construction of further categorical structures.
conversion of a weak structure into a strict 2-category.

\subsection{Implications for Categorical Logic}
\label{subsec:implications_categorical_logic_2}

Strictification techniques offer significant benefits in categorical logic, particularly when integrating local categories that model various logical connectives. By converting a weak structure (such as a bicategory) into a strict 2-category, strictification aids in the following ways:

\begin{itemize}
  \item \textbf{Preservation of Universality:} The universal properties (e.g., products, coproducts, and exponentials) inherent in the local categories are preserved up to coherent isomorphism. This means that even after strictification, the essential logical semantics captured by these universal constructions remain valid.
  
  \item \textbf{Enhanced Coherence:} In weak categorical structures, associativity and identity laws hold only up to natural isomorphism, necessitating elaborate coherence conditions. Strictification enforces these laws strictly, reducing the burden of verifying complex coherence diagrams. This streamlining makes it easier to prove that the integrated system maintains the desired logical relationships.
  
  \item \textbf{Simplification of Logical Reasoning:} With strict composition, reasoning about the interaction of logical connectives becomes more transparent. The elimination of extraneous coherence data minimizes potential ambiguities and errors, leading to clearer proofs and more robust semantic interpretations.
\end{itemize}

\noindent
Thus, strictification is not merely a technical tool—it plays a crucial conceptual role by ensuring that the integrated categorical framework accurately reflects the universal properties and coherence required for a sound interpretation of logical connectives.

\newpage
\section{Summary of Theoretical Background}
\label{sec:summary_background}

In this chapter, we laid the foundation for the integration of logical connectives within a categorical framework. Key concepts and results presented include:

\begin{itemize}
  \item \textbf{Local Categories for Logical Connectives:} We defined local categories corresponding to logical connectives such as negation, conjunction, disjunction, and implication. Each local category is characterized by a universal property—products for conjunction, coproducts for disjunction, dualizing objects for negation, and exponentials for implication—thereby establishing a robust semantic interpretation.
  
  \item \textbf{Universal Properties and Their Role:} Universal constructions, such as products, coproducts, and exponential objects, were shown to underpin the semantics of logical connectives by ensuring the existence and uniqueness (up to isomorphism) of mediating morphisms. These properties are essential in modeling the logical operations within categorical semantics.
  
  \item \textbf{2-Categories and Bicategories:} We introduced 2-categories and bicategories as frameworks that extend the notion of categories by incorporating 2-morphisms. These higher structures allow for a flexible treatment of associativity and identity laws via natural isomorphisms, thereby facilitating a more nuanced handling of coherence.
  
  \item \textbf{Pseudo-Limits and Pseudo-Colimits:} Recognizing that strict limits and colimits are often too rigid in higher category theory, we discussed pseudo-limits and pseudo-colimits. These constructions ensure that universal properties hold up to coherent isomorphism, which is crucial in the context of weak categorical structures.
  
  \item \textbf{Strictification Theorem:} A central result presented was the strictification theorem, which guarantees that every bicategory is biequivalent to a strict 2-category. This transformation not only simplifies the verification of coherence conditions but also preserves the universal properties critical to logical semantics.
  
  \item \textbf{Implications for Categorical Logic:} Finally, we explored how strictification and the associated universal constructions enable the integration of local categories, ensuring that the overall framework maintains both universality and coherence. This lays the groundwork for applying the developed theoretical tools to concrete logical systems.
\end{itemize}

This summary sets the stage for the detailed constructions and integration approaches discussed in subsequent chapters.

\chapter{Local Categories Corresponding to Logical Connectives}
\section{Overview of Local Categories for Logical Connectives}
In this chapter, we introduce the local categories corresponding to the logical connectives (negation, product, coproduct, and exponential). Each category is defined with its own objects, morphisms, and universal properties that capture the semantics of the corresponding logical connective.

\section{Negation Category}
\subsection{Definition and Motivation}
\label{subsec:negation_definition_motivation}

The \index{negation category}negation category is introduced as a local category whose purpose is to capture the logical operation of negation in a categorical setting. In categorical logic, negation is often modeled via duality or by means of an adjunction with a fixed dualizing object. This approach allows us to interpret the logical complement in a manner that is both structurally and semantically robust.

\begin{defn}
\label{defn:negation_category}
Let \(\mathcal{C}\) be a category equipped with a distinguished \index{dualizing object}dualizing object \(D\). For each object \(A \in \operatorname{Ob}(\mathcal{C})\), define its \emph{negation} \(\lnot A\) as an object together with a universal morphism 
\[
\eta_A: A \longrightarrow D,
\]
which satisfies the following universal property: For any object \(X\) and any morphism \(f: A \to X\) that factors through \(D\), there exists a unique morphism \(u: \lnot A \to X\) such that the diagram
\[
\begin{tikzcd}[column sep=large, row sep=large]
A \arrow[r, "\eta_A"] \arrow[dr, "f"'] & D \arrow[d, dashed, "u"] \\
& X
\end{tikzcd}
\]
commutes. The \emph{negation category} is then defined as the category whose objects are those of \(\mathcal{C}\) equipped with the negation operation, and whose morphisms are those that preserve this structure.
\end{defn}

\paragraph{Motivation:}
The construction of the negation category is motivated by the desire to provide a categorical counterpart to the classical logical operation of negation. In traditional logic, negation captures the idea of complementarity and contradiction. Similarly, in categorical terms, by using a dualizing object \(D\) and enforcing the universal property in Definition~\ref{defn:negation_category}, we ensure that the operation \(\lnot A\) behaves as the \emph{logical complement} of \(A\). This not only allows for the systematic treatment of negation within the broader framework of categorical logic but also facilitates the integration of negation with other logical connectives (such as conjunction and disjunction) in a coherent and unified manner.

\subsection{Objects and Morphisms}
\label{subsec:negation_objects_morphisms}

In the negation category, each object is an object \(A\) from the underlying category \(\mathcal{C}\) that is equipped with a distinguished morphism
\[
\eta_A: A \longrightarrow D,
\]
where \(D\) is a fixed \index{dualizing object}dualizing object. This morphism \(\eta_A\) encapsulates the notion of “negation” by relating \(A\) to the dualizing object, and the corresponding object \(\lnot A\) (often defined via a universal property) serves as the categorical representation of the logical complement of \(A\).

\paragraph{Objects:}  
An object in the negation category is given by the pair \((A,\eta_A)\), where:
\begin{itemize}
  \item \(A\) is an object of \(\mathcal{C}\).
  \item \(\eta_A: A \to D\) is a fixed morphism that represents the negation mapping.
\end{itemize}
Intuitively, \(\lnot A\) is defined through the universal property that relates \(A\) to any other object \(X\) via a unique mediating morphism when the map factors through \(D\).

\paragraph{Morphisms:}  
A morphism \(f: (A,\eta_A) \to (B,\eta_B)\) in the negation category is a morphism \(f: A \to B\) in \(\mathcal{C}\) that preserves the negation structure. More precisely, \(f\) is \emph{negation-preserving} if the following diagram commutes:
\[
\begin{tikzcd}[column sep=large, row sep=large]
A \arrow[r, "f"] \arrow[d, "\eta_A"'] & B \arrow[d, "\eta_B"] \\
D \arrow[r, equal] & D
\end{tikzcd}
\]
This commutativity ensures that applying \(f\) to \(A\) is compatible with the negation mappings, meaning that the negation structure is maintained under the morphism.

\noindent
In summary, the objects and morphisms of the negation category are defined to inherently include the negation operation via the distinguished maps \(\eta_A\), thereby providing a categorical framework that mirrors the logical concept of negation.

\subsection{Universal Property and Duality}
\label{subsec:universal_property_duality}

In the negation category, the operation of negation is characterized by a universal property that embodies a form of duality between an object \(A\) and its complement \(\lnot A\). Given a fixed dualizing object \(D\) in the underlying category \(\mathcal{C}\), each object \(A\) is equipped with a negation morphism 
\[
\eta_A: A \to D.
\]
The universal property ensures that the complement \(\lnot A\) of \(A\) is defined in such a way that any morphism from \(A\) into an object \(X\) (which, in some sense, factors through \(D\)) uniquely factors through \(\lnot A\).

\begin{defn}
\label{defn:negation_universal_property}
Let \(A\) be an object in \(\mathcal{C}\) with a negation morphism \(\eta_A: A \to D\). An object \(\lnot A\) is said to be the \emph{negation} (or \emph{complement}) of \(A\) if there exists a morphism 
\[
\nu_A: A \to \lnot A,
\]
such that for every object \(X\) and every morphism \(f: A \to X\) satisfying
\[
\begin{tikzcd}[column sep=large, row sep=large]
A \arrow[r, "f"] \arrow[d, "\eta_A"'] & X \\
D \arrow[ur, dashed, "u"'] &
\end{tikzcd}
\]
there exists a unique morphism \(u: \lnot A \to X\) making the following diagram commute:
\[
\begin{tikzcd}[column sep=large, row sep=large]
A \arrow[r, "\nu_A"] \arrow[dr, "\eta_A"'] & \lnot A \arrow[d, "u"] \\
& D
\end{tikzcd}
\]
\end{defn}

\paragraph{Duality and Complementarity:}  
This universal property reflects the duality inherent in the concept of negation. The map \(\nu_A\) canonically embeds \(A\) into its complement \(\lnot A\), and the unique factorization through \(\lnot A\) ensures that \(\lnot A\) behaves as a true categorical complement of \(A\). This construction parallels the classical logical notion where the negation of a proposition corresponds to its complement, establishing a duality between truth and falsity in the logical framework.

\noindent
In summary, the universal property in Definition~\ref{defn:negation_universal_property} not only defines the negation operation categorically but also provides a rigorous foundation for duality and complementarity within the negation category.

\subsection{Examples and Discussion}
\label{subsec:negation_examples_discussion}

To illustrate the significance of the negation category, we now present some concrete examples and related discussions that highlight its universal property and its role as a model for logical complement.

\paragraph{Example 1: Negation in the Category of Sets}  
In the category \(\mathbf{Set}\), let \(U\) be a fixed universal set and consider the dualizing object \(D = U\). For any subset \(A \subseteq U\), we define the negation of \(A\) as its set-theoretic complement:
\[
\lnot A = U \setminus A.
\]
The canonical inclusion \(\eta_A: A \hookrightarrow U\) plays the role of the negation mapping. The universal property in this context asserts that for any function \(f: A \to X\) that factors through \(U\), there exists a unique function \(u: U \setminus A \to X\) such that the following diagram commutes:
\[
\begin{tikzcd}[column sep=large, row sep=large]
A \arrow[r, "\eta_A"] \arrow[dr, "f"'] & U \arrow[d, dashed, "u"] \\
& X
\end{tikzcd}
\]
This example shows how the notion of complementarity in set theory can be captured categorically via a universal factorization property.

\paragraph{Example 2: Negation in Boolean Algebras}  
Consider a Boolean algebra \(\mathcal{B}\), viewed as a category where objects are the elements of \(\mathcal{B}\) and there is a unique morphism \(a \to b\) if and only if \(a \leq b\). The complement (negation) of an element \(a \in \mathcal{B}\) is given by \(\lnot a\), satisfying the equations
\[
a \land \lnot a = 0 \quad \text{and} \quad a \lor \lnot a = 1.
\]
In categorical terms, the complement \(\lnot a\) is characterized by a universal property: for any element \(x\) such that there exists a morphism \(a \to x\) that factors through the designated dualizing element (here, \(0\) or \(1\), depending on the context), there is a unique mediating morphism \(u: \lnot a \to x\). This mirrors the classical logical notion that an element and its complement collectively exhaust the Boolean universe.

\paragraph{Discussion:}  
These examples emphasize the conceptual power of the negation category. In both the category of sets and Boolean algebras, the universal property ensures that the negation operation is not an arbitrary assignment but is defined by a unique factorization condition. This approach guarantees consistency when combining negation with other logical operations in a broader categorical framework. Moreover, such a formulation is flexible enough to be adapted to more complex logical systems, thereby providing a unified foundation for categorical logic.

The negation category thus serves as a crucial component in the integration of local categories for logical connectives, ensuring that the semantics of negation are captured rigorously and coherently.

\newpage
\section{Product Category}
\subsection{Definition of the Product Structure}
\label{subsec:product_definition}

In the product category, the combination of objects and the corresponding projection morphisms are defined so as to satisfy the universal property of products. In this context, given two objects \(A\) and \(B\) from the underlying category \(\mathcal{C}\), their product is an object \(A \times B\) together with two projection morphisms
\[
\pi_A: A \times B \to A \quad \text{and} \quad \pi_B: A \times B \to B.
\]
These projections must satisfy the following universal property:

\begin{defn}
\label{defn:product_category}
Let \(A\) and \(B\) be objects in \(\mathcal{C}\). A \emph{product} of \(A\) and \(B\) is an object \(A \times B\) together with morphisms \(\pi_A\) and \(\pi_B\) such that for any object \(X\) with morphisms
\[
f: X \to A \quad \text{and} \quad g: X \to B,
\]
there exists a unique morphism \(\langle f, g \rangle: X \to A \times B\) making the following diagram commute:
\[
\xymatrix@R=5em@C=5em{
X 
  \ar@/^1.5pc/[drr]^-{g} 
  \ar[dr]|-{\langle f, g \rangle} 
  \ar@/_1.5pc/[ddr]_-{f} & & \\
& A \times B \ar[r]_-{\pi_B} \ar[d]_-{\pi_A} & B \\
& A &
}
\]
That is, the equations
\[
\pi_A \circ \langle f, g \rangle = f \quad \text{and} \quad \pi_B \circ \langle f, g \rangle = g
\]
hold.
\end{defn}

\paragraph{Interpretation:}  
The above definition encapsulates the idea that \(A \times B\) is the most general object that projects onto \(A\) and \(B\) in a way that any other object mapping to \(A\) and \(B\) factors uniquely through \(A \times B\). This universal property is the categorical analogue of the logical conjunction, where having both \(A\) and \(B\) is encoded by the pair \((A, B)\).

\paragraph{Discussion:}  
In our construction of the product category, the focus is on ensuring that the projection morphisms preserve the structure required by the universal property. This guarantees that the product, as defined, is unique up to unique isomorphism and can serve as a building block for more complex categorical constructions in logical semantics.

\subsection{Universal Property of the Product}
\label{subsec:universal_property_product}

Let \(\mathcal{C}\) be a category and let \(A, B \in \mathrm{Ob}(\mathcal{C})\). A \emph{product} of \(A\) and \(B\) is an object \(P\) equipped with two projection morphisms
\[
\pi_A: P \to A \quad \text{and} \quad \pi_B: P \to B,
\]
such that for any object \(X\) and any pair of morphisms
\[
f: X \to A \quad \text{and} \quad g: X \to B,
\]
there exists a unique morphism \(\langle f, g \rangle: X \to P\) making the following diagram commute:
\[
\xymatrix@R=5em@C=5em{
X 
  \ar@/^1.5pc/[drr]^-{g} 
  \ar[dr]|-{\langle f, g \rangle} 
  \ar@/_1.5pc/[ddr]_-{f} & & \\
& P \ar[r]_-{\pi_B} \ar[d]_-{\pi_A} & B \\
& A &
}
\]
That is, the equations
\[
\pi_A \circ \langle f, g \rangle = f \quad \text{and} \quad \pi_B \circ \langle f, g \rangle = g
\]
hold. The uniqueness of \(\langle f, g \rangle\) guarantees that \(P\) is, up to unique isomorphism, the most general object through which every pair of morphisms \(f\) and \(g\) factor.

\subsection{Construction and Examples}
\label{subsec:product_construction_examples}

In this subsection, we present a concrete construction of the product category and illustrate its application with typical examples.

First, recall that for any two objects \(A\) and \(B\) in a category \(\mathcal{C}\), a product \(A \times B\) is defined by an object together with projection morphisms
\[
\pi_A: A \times B \to A \quad \text{and} \quad \pi_B: A \times B \to B,
\]
satisfying the following universal property: for any object \(X\) and any pair of morphisms \(f: X \to A\) and \(g: X \to B\), there exists a unique morphism
\[
\langle f, g \rangle: X \to A \times B
\]
such that
\[
\pi_A \circ \langle f, g \rangle = f \quad \text{and} \quad \pi_B \circ \langle f, g \rangle = g.
\]

\paragraph{Construction in the Category of Sets:}
In the familiar category \(\mathbf{Set}\), the product of two sets \(A\) and \(B\) is the Cartesian product \(A \times B\). The projection maps are the standard coordinate projections:
\[
\pi_A(a,b) = a \quad \text{and} \quad \pi_B(a,b) = b.
\]
Given any set \(X\) and functions \(f: X \to A\) and \(g: X \to B\), the unique function \(\langle f, g \rangle: X \to A \times B\) is defined by
\[
\langle f, g \rangle (x) = (f(x), g(x)).
\]
This construction clearly satisfies the universal property of the product.

\paragraph{Diagrammatic Representation:}
The following commutative diagram summarizes the universal property:
\[
\xymatrix@R=5em@C=5em{
X 
  \ar@/^1.5pc/[drr]^-{g} 
  \ar[dr]|-{\langle f, g \rangle} 
  \ar@/_1.5pc/[ddr]_-{f} & & \\
& A \times B \ar[r]_-{\pi_B} \ar[d]_-{\pi_A} & B \\
& A &
}
\]

\paragraph{Application Example: Logical Conjunction in Categorical Semantics:}
In categorical logic, the product operation is used to model the logical conjunction (\(\land\)). If propositions are interpreted as objects in a category, then the product of two propositions \(A\) and \(B\) (representing “\(A\) and \(B\)”) is modeled by their categorical product \(A \times B\). The universal property guarantees that any evidence proving \(A\) and \(B\) together factors uniquely through the product, aligning with the intuition behind conjunction.

\paragraph{Summary:}
The construction of the product category, exemplified here in the category of sets, provides a robust framework for modeling conjunction in logic and underpins many other applications in categorical semantics. This concrete example serves as a stepping stone to more complex constructions, such as integrating various local categories in a unified semantic framework.

\section{Coproduct Category}
\subsection{Definition of the Coproduct Structure}
\label{subsec:coproduct_definition}

For any objects \(A\) and \(B\) in a category \(\mathcal{C}\), a \emph{coproduct} of \(A\) and \(B\) is an object \(A+B\) together with two injection morphisms
\[
\iota_A: A \to A+B \quad \text{and} \quad \iota_B: B \to A+B,
\]
satisfying the following universal property:

\begin{defn}
Let \(A\) and \(B\) be objects in \(\mathcal{C}\). A \emph{coproduct} of \(A\) and \(B\) is an object \(A+B\) with injections \(\iota_A\) and \(\iota_B\) such that for any object \(X\) and any pair of morphisms
\[
f: A \to X \quad \text{and} \quad g: B \to X,
\]
there exists a unique morphism \([f, g]: A+B \to X\) making the diagram commute:
\[
\begin{tikzcd}[column sep=large, row sep=large]
A \arrow[r, "\iota_A"] \arrow[dr, "f"'] & A+B \arrow[d, dashed, "{[f, g]}"] & B \arrow[l, "\iota_B"'] \arrow[dl, "g"] \\
& X &
\end{tikzcd}
\]
That is, the equations
\[
[f, g] \circ \iota_A = f \quad \text{and} \quad [f, g] \circ \iota_B = g
\]
hold.
\end{defn}

\paragraph{Interpretation:}  
This definition encapsulates the idea that \(A+B\) is the most general object that “contains” both \(A\) and \(B\) via the injection maps. Any pair of morphisms from \(A\) and \(B\) to another object \(X\) factors uniquely through \(A+B\), mirroring the logical interpretation of disjunction.

\subsection{Universal Property of the Coproduct}
\label{subsec:universal_property_coproduct}

Let \(A\) and \(B\) be objects in a category \(\mathcal{C}\). A \emph{coproduct} of \(A\) and \(B\) is an object \(A+B\) equipped with injection morphisms
\[
\iota_A: A \to A+B \quad \text{and} \quad \iota_B: B \to A+B,
\]
satisfying the following universal property:

For any object \(X\) and any pair of morphisms
\[
f: A \to X \quad \text{and} \quad g: B \to X,
\]
there exists a unique morphism \([f, g]: A+B \to X\) such that the following diagram commutes:
\[
\begin{tikzcd}[column sep=large, row sep=large]
A \arrow[r, "\iota_A"] \arrow[dr, "f"'] & A+B \arrow[d, dashed, "{[f, g]}"] & B \arrow[l, "\iota_B"'] \arrow[dl, "g"] \\
& X &
\end{tikzcd}
\]
That is, the equations
\[
[f, g] \circ \iota_A = f \quad \text{and} \quad [f, g] \circ \iota_B = g
\]
hold. The uniqueness of \([f, g]\) ensures that the coproduct \(A+B\) is unique up to unique isomorphism.

\subsection{Construction and Examples}
\label{subsec:coproduct_construction_examples}

In this subsection, we present concrete constructions of the coproduct in a local category and illustrate its application through typical examples.

\paragraph{Construction in the Category of Sets:}  
In \(\mathbf{Set}\), the coproduct of two sets \(A\) and \(B\) is given by the disjoint union \(A \sqcup B\). In order to ensure the disjointness of \(A\) and \(B\) (even if they have elements in common), one typically tags the elements, for example:
\[
A \sqcup B = \{ (a,0) \mid a \in A \} \cup \{ (b,1) \mid b \in B \}.
\]
The injection maps are defined as:
\[
\iota_A: A \to A \sqcup B, \quad a \mapsto (a,0), \qquad \iota_B: B \to A \sqcup B, \quad b \mapsto (b,1).
\]
For any set \(X\) and functions \(f: A \to X\) and \(g: B \to X\), the unique function \([f, g]: A \sqcup B \to X\) is defined by:
\[
[f, g](a,0) = f(a) \quad \text{and} \quad [f, g](b,1) = g(b).
\]
This construction satisfies the universal property of the coproduct.

\paragraph{Diagrammatic Representation:}  
The universal property is captured by the following commutative diagram:
\[
\begin{tikzcd}[column sep=large, row sep=large]
A \arrow[r, "\iota_A"] \arrow[dr, "f"'] & A\sqcup B \arrow[d, dashed, "{[f, g]}"] & B \arrow[l, "\iota_B"'] \arrow[dl, "g"] \\
& X &
\end{tikzcd}
\]

\paragraph{Other Examples and Applications:}  
\begin{itemize}
  \item \textbf{Free Products in Group Theory:}  
  In the category of groups, the coproduct is given by the free product of groups. If \(G\) and \(H\) are groups, their free product \(G * H\) satisfies a universal property analogous to that of the disjoint union in \(\mathbf{Set}\), but within the context of group homomorphisms.
  
  \item \textbf{Logical Disjunction:}  
  In categorical logic, the coproduct models the logical disjunction (\(\lor\)). When propositions are interpreted as objects, the disjunction corresponds to a coproduct where the injection maps represent the inclusion of each alternative. This allows one to reason about “either/or” statements in a structurally robust manner.
  
  \item \textbf{Topos Theory:}  
  In a topos, coproducts (or disjoint unions) play a key role in constructing and interpreting internal logical formulas, further bridging the gap between categorical structures and logical semantics.
\end{itemize}

\paragraph{Summary:}  
The construction of the coproduct in various concrete categories—such as sets, groups, and topoi—demonstrates how the universal property facilitates the unique factorization of morphisms through the disjoint union or free product. This not only mirrors the logical operation of disjunction but also provides a powerful tool in the categorical analysis of logical systems.

\newpage
\section{Exponential Category}
\subsection{Definition of Exponential Objects}
\label{subsec:exponential_definition}

In a cartesian closed category \(\mathcal{C}\), for any objects \(A\) and \(B\), an \emph{exponential object} \(B^A\) is defined together with an evaluation morphism
\[
\mathrm{ev}: B^A \times A \longrightarrow B,
\]
which satisfies the following universal property:

\begin{defn}
Let \(\mathcal{C}\) be a cartesian closed category and \(A, B \in \operatorname{Ob}(\mathcal{C})\). The object \(B^A\) is called an \emph{exponential object} (or \emph{function space}) from \(A\) to \(B\) if there exists a morphism
\[
\mathrm{ev}: B^A \times A \to B
\]
such that for every object \(X\) and for every morphism 
\[
f: X \times A \to B,
\]
there exists a unique morphism
\[
\tilde{f}: X \to B^A
\]
making the following diagram commute:
\[
\begin{tikzcd}[column sep=large, row sep=large]
X \times A \arrow[r, "f"] \arrow[d, "\tilde{f}\times \mathrm{id}_A"'] & B \\
B^A \times A \arrow[ur, "\mathrm{ev}"'] &
\end{tikzcd}
\]
That is, the equation
\[
f = \mathrm{ev} \circ (\tilde{f} \times \mathrm{id}_A)
\]
holds. The unique morphism \(\tilde{f}\) is often referred to as the \emph{currying} of \(f\).
\end{defn}

\paragraph{Interpretation:}  
The exponential object \(B^A\) encapsulates the notion of "functions from \(A\) to \(B\)" within the category \(\mathcal{C}\). The evaluation morphism \(\mathrm{ev}\) plays the role of "function application," and the universal property ensures that any morphism \(f: X \times A \to B\) factors uniquely through \(B^A\) via currying. This construction is fundamental in the categorical interpretation of logical implication and functional abstraction.

\subsection{Universal Property and Currying}
\label{subsec:universal_property_currying}

In a cartesian closed category \(\mathcal{C}\), the exponential object \(B^A\) together with the evaluation morphism
\[
\mathrm{ev}: B^A \times A \longrightarrow B,
\]
satisfies the following universal property, which encapsulates the notion of \emph{currying}.

\begin{defn}
\label{defn:currying_universal_property}
Let \(A, B, X \in \operatorname{Ob}(\mathcal{C})\) and let \(B^A\) be the exponential object with evaluation map \(\mathrm{ev}: B^A \times A \to B\). Then for every morphism
\[
f: X \times A \to B,
\]
there exists a unique morphism
\[
\tilde{f}: X \to B^A,
\]
called the \emph{currying} of \(f\), such that the following diagram commutes:
\[
\begin{tikzcd}[column sep=large, row sep=large]
X \times A \arrow[r, "f"] \arrow[d, "\tilde{f}\times\mathrm{id}_A"'] & B \\
B^A \times A \arrow[ur, "\mathrm{ev}"'] &
\end{tikzcd}
\]
That is,
\[
f = \mathrm{ev} \circ (\tilde{f} \times \mathrm{id}_A).
\]
\end{defn}

\paragraph{Interpretation:}  
This universal property asserts that the process of currying provides a one-to-one correspondence between morphisms \(f: X \times A \to B\) and morphisms \(\tilde{f}: X \to B^A\). The unique morphism \(\tilde{f}\) encapsulates the idea that a function of two arguments can be reinterpreted as a function returning another function. This concept is central to both the categorical understanding of implication and the functional abstraction in logic.

\noindent
In summary, the universal property of the exponential object ensures that any mapping out of a product \(X \times A\) factors uniquely through the evaluation map, thereby formalizing the process of currying in a categorical setting.

\subsection{Construction and Examples}
\label{subsec:exponential_construction_examples}

In a cartesian closed category, exponential objects capture the notion of function spaces and enable the process of currying. We now illustrate the construction of exponential objects and provide concrete examples of currying, using the familiar category \(\mathbf{Set}\) as our primary example.

\paragraph{Construction in \(\mathbf{Set}\):}  
For any two sets \(A\) and \(B\), the exponential object \(B^A\) is defined as the set of all functions from \(A\) to \(B\):
\[
B^A = \{ h \mid h: A \to B \}.
\]
The evaluation morphism
\[
\mathrm{ev}: B^A \times A \to B
\]
is given by
\[
\mathrm{ev}(h, a) = h(a) \quad \text{for all } h \in B^A \text{ and } a \in A.
\]

\paragraph{Currying Process:}  
Given any set \(X\) and any function
\[
f: X \times A \to B,
\]
we define the \emph{currying} of \(f\) as the unique function
\[
\tilde{f}: X \to B^A,
\]
where \(\tilde{f}(x)\) is the function from \(A\) to \(B\) defined by
\[
\tilde{f}(x)(a) = f(x, a) \quad \text{for all } x \in X \text{ and } a \in A.
\]
The universal property of the exponential object asserts that this \(\tilde{f}\) is unique and satisfies the equation:
\[
f = \mathrm{ev} \circ (\tilde{f} \times \mathrm{id}_A).
\]

\paragraph{Commutative Diagram:}  
The universal property is illustrated by the following diagram:
\[
\begin{tikzcd}[column sep=large, row sep=large]
X \times A \arrow[r, "f"] \arrow[d, "\tilde{f} \times \mathrm{id}_A"'] & B \\
B^A \times A \arrow[ur, "\mathrm{ev}"'] &
\end{tikzcd}
\]
This diagram commutes, meaning that for all \(x \in X\) and \(a \in A\), we have:
\[
f(x, a) = \mathrm{ev}(\tilde{f}(x), a).
\]

\paragraph{Abstract Perspective:}  
In any cartesian closed category, the exponential object \(B^A\) and the evaluation morphism \(\mathrm{ev}\) give rise to a natural isomorphism:
\[
\operatorname{Hom}(X \times A, B) \cong \operatorname{Hom}(X, B^A),
\]
which formalizes the currying process. This correspondence is central to the interpretation of logical implication in categorical logic, where \(B^A\) represents the internal hom and the process of currying mirrors the abstraction of functions in logic and computer science.

\paragraph{Summary:}  
The construction of exponential objects in \(\mathbf{Set}\) provides a concrete example of how currying works in a cartesian closed category. The unique factorization through the evaluation map ensures that any function \(f: X \times A \to B\) corresponds uniquely to a curried function \(\tilde{f}: X \to B^A\). This universal property is a cornerstone in categorical semantics, linking logical implication to the structure of function spaces.

\newpage
\section{Summary and Transition to 2-Category Integration}
\label{sec:summary_transition_2cat}

In this section, we summarize the key aspects of each local category and discuss how these structures will serve as the building blocks for the 2-category integration presented in the following chapters.

\subsection*{Summary of Local Categories}
Each local category constructed in this work encapsulates the semantics of a specific logical connective:
\begin{itemize}
  \item \textbf{Negation Category:} Models logical negation using a dualizing object and a universal mapping property that captures the notion of complementarity.
  \item \textbf{Product Category:} Represents logical conjunction via the categorical product. The product object is equipped with projection morphisms and satisfies the universal property ensuring unique factorization of any pair of morphisms from an arbitrary object.
  \item \textbf{Coproduct Category:} Models logical disjunction through the coproduct (or disjoint union), characterized by injection morphisms and a universal property that guarantees a unique mediating morphism.
  \item \textbf{Exponential Category:} Captures logical implication using exponential objects in a cartesian closed category. The evaluation morphism and the process of currying provide a categorical formulation of functional abstraction.
\end{itemize}

\subsection*{Transition to 2-Category Integration}
The local categories described above provide robust semantic interpretations for their respective logical connectives. However, to integrate these diverse structures into a cohesive framework, we extend them into a 2-categorical setting. In the following chapters, we will:
\begin{itemize}
  \item Extend each local category by introducing 2-morphisms (such as natural transformations), thereby forming 2-categories or bicategories.
  \item Integrate these extended structures into a unified 2-category that preserves the universal properties of the local categories while accommodating flexible composition via coherent 2-morphisms.
  \item Apply strictification techniques to convert the resulting weak structure into a strict 2-category, thus simplifying the verification of coherence conditions and enabling more streamlined categorical reasoning.
\end{itemize}

This transition not only maintains the integrity of the universal properties in each local category but also creates a solid foundation for a unified categorical logic that operates at the level of 2-categories.

\chapter{2-Category Extension and Natural Isomorphisms}
\section{Motivation for Extending Local Categories to a 2-Category Framework}
\subsection{Limitations of 1-Category Approaches}
\label{subsec:limitations_1cat}

While 1-categories provide a solid foundation for many mathematical constructions, they often lack the flexibility needed to capture certain universal properties and coherence conditions that arise naturally in more complex settings. In particular:

\paragraph{Strict Universal Properties:}
In a 1-category, universal properties (such as those defining products, coproducts, and exponentials) are formulated with strict commutativity. That is, for a given construction, the mediating morphism is required to make a diagram commute exactly. However, many natural constructions only satisfy these properties up to a unique isomorphism rather than by strict equality. This strictness may be overly rigid in contexts where the uniqueness should only be determined up to isomorphism.

\paragraph{Lack of Coherence Mechanisms:}
Coherence conditions ensure that all different ways of composing morphisms yield equivalent results. In a 1-category, the absence of higher morphisms (such as 2-morphisms) means that there is no systematic way to express or manage the flexibility needed to account for associativity or unit laws up to isomorphism. Consequently, complex structures that inherently require coherent isomorphisms (for instance, those found in bicategories) cannot be adequately modeled within the 1-categorical framework.

\noindent
These limitations motivate the extension to higher categorical frameworks, such as 2-categories and bicategories, where universal properties and coherence conditions can be expressed in a more flexible and natural manner.

\subsection{Advantages of 2-Categories}
\label{subsec:advantages_2cat}

One significant advantage of working within a 2-category is the ability to express universal properties via natural isomorphisms—namely, 2-morphisms. In a 2-category, the requirements for universal constructions such as products, coproducts, or exponentials are relaxed: instead of demanding strict equality, the necessary diagrams need only commute up to a specified natural isomorphism. This feature brings several benefits:

\begin{itemize}
  \item \textbf{Flexible Universal Constructions:}  
  In 2-categories, the uniqueness of mediating morphisms is understood up to a natural isomorphism. This means that for a universal property, while there may not be a unique morphism in the strict sense, there is a unique morphism up to coherent isomorphism. This reflects more accurately many mathematical and logical situations where uniqueness is inherently “up-to-isomorphism.”
  
  \item \textbf{Explicit Coherence Data:}  
  The introduction of 2-morphisms allows for a precise expression of coherence conditions. Associativity and unit laws, which must hold strictly in 1-categories, are relaxed to hold up to coherent isomorphisms (associators and unitors) that satisfy standard coherence conditions like the pentagon and triangle identities.
  
  \item \textbf{Simplification of Complex Structures:}  
  Many naturally occurring structures, such as bicategories, inherently involve weak compositions. By working in a 2-category framework, one can capture these weak structures directly, enabling a more natural integration of various local categories. This simplifies reasoning about transformations between different compositional paths.
\end{itemize}

Overall, the use of 2-categories allows us to model universal properties in a way that is both more flexible and more faithful to many real-world mathematical phenomena. The natural isomorphisms provided by 2-morphisms serve as the key tool to manage and simplify the coherence issues that are unavoidable in higher categorical structures.

\newpage
\subsection{Basic Definitions and Examples}
\label{subsec:basic_definitions_examples}

In this section, we introduce the foundational concepts of 2-categories and bicategories, focusing on their components—objects, 1-morphisms, and 2-morphisms—and providing illustrative examples to elucidate these structures.

\paragraph{2-Categories:}

A \textbf{2-category} extends the notion of a category by incorporating morphisms between morphisms, known as 2-morphisms. Formally, a 2-category consists of:

- **Objects**: Entities denoted as \( A, B, C, \ldots \).
- **1-Morphisms**: Also called arrows or morphisms, these are maps between objects. For objects \( A \) and \( B \), a 1-morphism \( f \) is represented as \( f: A \to B \).
- **2-Morphisms**: Also referred to as 2-cells, these are morphisms between 1-morphisms. Given two 1-morphisms \( f, g: A \to B \), a 2-morphism \( \alpha \) from \( f \) to \( g \) is denoted \( \alpha: f \Rightarrow g \).

These components are subject to two types of composition:

1. **Vertical Composition**: The composition of 2-morphisms between the same pair of 1-morphisms. For \( \alpha: f \Rightarrow g \) and \( \beta: g \Rightarrow h \), their vertical composition \( \beta \circ \alpha: f \Rightarrow h \) satisfies associativity and has identity 2-morphisms as units.

2. **Horizontal Composition**: The composition of 1-morphisms and their corresponding 2-morphisms. For \( f: A \to B \) and \( g: B \to C \), their horizontal composition is \( g \circ f: A \to C \). For 2-morphisms \( \alpha: f \Rightarrow f' \) and \( \beta: g \Rightarrow g' \), the horizontal composition \( \beta \ast \alpha: g \circ f \Rightarrow g' \circ f' \) is defined, adhering to associativity and unit laws.

An essential coherence condition in 2-categories is the \textbf{interchange law}, which governs the interaction between vertical and horizontal compositions of 2-morphisms:

\[
(\beta \circ \alpha) \ast (\delta \circ \gamma) = (\beta \ast \delta) \circ (\alpha \ast \gamma)
\]

\paragraph{Example: The 2-Category \(\mathbf{Cat}\):}

A canonical example of a 2-category is \(\mathbf{Cat}\), defined as follows:

- **Objects**: Small categories.
- **1-Morphisms**: Functors between categories.
- **2-Morphisms**: Natural transformations between functors.

In \(\mathbf{Cat}\), vertical composition corresponds to the composition of natural transformations, while horizontal composition corresponds to the composition of functors, with natural transformations composing accordingly.

\paragraph{Bicategories:}

A \textbf{bicategory} generalizes the concept of a 2-category by relaxing the strictness of composition:

- **Associativity and Unit Constraints**: In a bicategory, the composition of 1-morphisms is associative and unital only up to specified natural isomorphisms (called associators and unitors), which themselves satisfy coherence conditions, such as the pentagon and triangle identities.

\paragraph{Example: The Bicategory \(\mathbf{Rel}\):}

An illustrative example of a bicategory is \(\mathbf{Rel}\):

- **Objects**: Sets.
- **1-Morphisms**: Relations between sets.
- **2-Morphisms**: Inclusions of relations.

In \(\mathbf{Rel}\), the composition of relations is associative up to isomorphism, exemplifying the relaxed associativity inherent in bicategories.

\paragraph{Summary:}

Understanding the structures of 2-categories and bicategories is fundamental for modeling contexts where morphisms between morphisms and coherence conditions play a critical role. The examples of \(\mathbf{Cat}\) and \(\mathbf{Rel}\) demonstrate how these higher-categorical frameworks naturally extend traditional category theory, accommodating more complex compositional structures and relationships.

\subsection{Bicategories and Their Weak Structures}
\label{subsec:bicategories_weak}

A \emph{bicategory} is a generalization of a 2-category in which the associativity and identity laws hold only up to specified natural isomorphisms rather than strictly. In a bicategory \(\mathcal{B}\), the structure is given by:

\begin{itemize}
  \item \textbf{Objects:} The objects of \(\mathcal{B}\) are the same as in a 1-category.
  \item \textbf{1-Morphisms:} For any two objects \(A\) and \(B\), the hom-category \(\mathcal{B}(A,B)\) contains the 1-morphisms \(f: A \to B\).
  \item \textbf{2-Morphisms:} For any pair of 1-morphisms \(f, g: A \to B\), 2-morphisms \(\alpha: f \Rightarrow g\) are the morphisms in the hom-category \(\mathcal{B}(A,B)\).
\end{itemize}

In contrast to strict 2-categories, the composition in a bicategory is governed by the following weak structures:

\paragraph{Weak Associativity:}  
For any three composable 1-morphisms
\[
f: A \to B,\quad g: B \to C,\quad h: C \to D,
\]
the two possible ways to compose them, namely \((h \circ g) \circ f\) and \(h \circ (g \circ f)\), are not necessarily equal but are related by an \emph{associator} isomorphism
\[
a_{f,g,h}: (h \circ g) \circ f \xrightarrow{\sim} h \circ (g \circ f).
\]
These associator isomorphisms must satisfy the well-known \textbf{pentagon coherence condition} for any four composable 1-morphisms.

\paragraph{Weak Identity Laws:}  
For each 1-morphism \(f: A \to B\), there exist \emph{left} and \emph{right unitors}:
\[
l_f: \mathrm{id}_B \circ f \xrightarrow{\sim} f \quad \text{and} \quad r_f: f \circ \mathrm{id}_A \xrightarrow{\sim} f,
\]
which are required to satisfy the \textbf{triangle identity} in conjunction with the associators.

\paragraph{Need for Coherence:}  
Since the associativity and unit laws hold only up to natural isomorphism, a variety of different composites can arise. Coherence conditions, such as the pentagon and triangle identities, ensure that all these different ways of composing morphisms are suitably equivalent. In essence, these conditions guarantee that any diagram constructed from associators and unitors commutes, which is crucial for the internal consistency of the bicategory.

\noindent
In summary, bicategories capture the notion of “weak” categorical structures, where the rigidity of strict composition is relaxed in favor of a flexible framework that still retains coherent behavior through a system of natural isomorphisms. This approach is essential for modeling many naturally occurring mathematical phenomena where strict associativity or identity does not hold.

\newpage
\subsection{Strategy for Extension}
\label{subsec:strategy_extension}

To lift the local category structures (negation, product, coproduct, exponential) into a unified 2-categorical framework, we adopt a multi-step strategy that ensures both the preservation of universal properties and the management of coherence via 2-morphisms. The strategy consists of the following key steps:

\begin{enumerate}
  \item \textbf{Enrich Local Categories with 2-Morphisms:}  
  Each local category is extended to a 2-category by introducing 2-morphisms that capture the natural isomorphisms inherent in their universal properties. For example, while a product in a 1-category requires strict commutativity, in the 2-categorical setting, the associativity of the product is mediated by a natural isomorphism (the associator).

  \item \textbf{Define Pseudofunctors for Integration:}  
  Construct pseudofunctors that map the objects, 1-morphisms, and 2-morphisms from each local category into a global 2-category. These pseudofunctors preserve the structure of the local categories up to coherent isomorphism, ensuring that universal constructions (like products or exponentials) remain valid.

  \item \textbf{Establish Coherence Data:}  
  Introduce appropriate coherence isomorphisms (associators, unitors, etc.) to control the weak composition laws. These isomorphisms must satisfy standard coherence conditions (such as the pentagon and triangle identities), which guarantee that different composition orders yield equivalent results.

  \item \textbf{Employ Pseudo-Limits and Pseudo-Colimits:}  
  Use pseudo-limits and pseudo-colimits to glue together the local categories. The flexibility of pseudo-limits/colimits allows the integrated structure to maintain the universal properties of the individual local categories, even when these properties hold only up to isomorphism.

  \item \textbf{Strictification (Optional):}  
  When needed for further applications, apply strictification techniques to convert the weak 2-categorical structure into a strict 2-category without losing the essential universal properties. This step simplifies the verification of coherence conditions in later stages.
\end{enumerate}

By following this strategy, the local categories—each capturing a specific logical connective—are effectively "lifted" into a 2-categorical setting. This integrated framework not only preserves the universal properties of each local structure but also provides the necessary flexibility to manage coherence via 2-morphisms.

\subsection{Incorporating Natural Isomorphisms}
\label{subsec:incorporating_nat_iso}

In the 2-categorical framework, universal properties are often satisfied only up to a natural isomorphism. In other words, rather than requiring strict commutativity of diagrams, we allow them to commute up to a specified 2-morphism. This flexibility is essential for capturing many naturally occurring mathematical phenomena where the uniqueness of mediating morphisms holds only up to isomorphism.

Consider a universal construction in a 1-category where, for any object \(X\) and any pair of morphisms satisfying certain conditions, there exists a unique mediating morphism \(u: X \to U\) making a diagram strictly commute. In a 2-category, the analogous situation involves a 1-morphism \(u: X \to U\) together with a natural isomorphism
\[
\theta: \pi \circ u \xRightarrow{\sim} f,
\]
where \(\pi\) represents the family of projection morphisms from the universal object \(U\), and \(f\) is the given cone over a diagram \(D\). Here, the isomorphism \(\theta\) is a 2-morphism, and it constitutes the coherence data ensuring that the universal property is satisfied up to isomorphism.

This incorporation of natural isomorphisms has several key advantages:
\begin{itemize}
  \item \textbf{Flexibility in Universal Properties:} The universal constructions (such as products, coproducts, and exponentials) need only satisfy their defining properties up to natural isomorphism, which aligns with the inherent variability in many mathematical settings.
  \item \textbf{Management of Coherence:} Natural isomorphisms serve to manage the coherence conditions in a 2-category. They ensure that all reasonable compositions of 1-morphisms yield results that are equivalent in a coherent fashion.
  \item \textbf{Simplification of Integration:} When lifting local categories into a global 2-categorical framework, natural isomorphisms allow different parts of the structure to be integrated without forcing strict equality, thereby simplifying the overall construction.
\end{itemize}

In summary, by introducing natural isomorphisms (2-morphisms), we capture the universal properties of categorical constructions in a flexible manner. This approach not only preserves the essential features of the original universal property but also provides the necessary framework to handle coherence in higher categorical structures.

\subsection{Formal Construction}
\label{subsec:formal_construction}

We now present a formal construction of the integrated 2-category \(\mathcal{D}\) obtained by "lifting" and combining the local categories \(\{\mathcal{C}_i\}_{i \in I}\) (each modeling a logical connective such as negation, product, coproduct, or exponential) using pseudo-limits and pseudo-colimits. The objective is to build a 2-category that preserves the universal properties of the local categories while managing the necessary coherence conditions via 2-morphisms.

\begin{defn}
\label{defn:integrated_2category}
Let \(\{\mathcal{C}_i\}_{i \in I}\) be a collection of local categories. The \emph{integrated 2-category} \(\mathcal{D}\) is defined by the following data:
\begin{enumerate}
  \item \textbf{Objects:} The objects of \(\mathcal{D}\) are given by the disjoint union of the objects of the local categories:
  \[
  \operatorname{Ob}(\mathcal{D}) = \bigsqcup_{i \in I} \operatorname{Ob}(\mathcal{C}_i).
  \]
  \item \textbf{1-Morphisms:} For any two objects \(A\) and \(B\) in \(\mathcal{D}\), the hom-category \(\mathcal{D}(A,B)\) is constructed as a pseudo-limit or pseudo-colimit of the diagram formed by the corresponding hom-categories of the local categories. This construction ensures that the universal properties (e.g., those defining products or exponentials) hold up to a coherent natural isomorphism.
  \item \textbf{2-Morphisms:} The 2-morphisms in \(\mathcal{D}(A,B)\) are defined as the natural transformations between the 1-morphisms, together with additional coherence data inherited from the pseudo-limiting or pseudo-colimiting process.
  \item \textbf{Composition:} Composition of 1-morphisms (and 2-morphisms) in \(\mathcal{D}\) is defined via the universal properties of pseudo-limits and pseudo-colimits. That is, given composable diagrams in the local categories, the composite in \(\mathcal{D}\) is the unique (up to coherent isomorphism) mediating morphism provided by the pseudo-limit construction.
\end{enumerate}
\end{defn}

\paragraph{Role of Pseudo-Limits and Pseudo-Colimits:}  
The key to this construction is the use of pseudo-limits and pseudo-colimits:
\begin{itemize}
  \item \textbf{Pseudo-Limits:} These allow us to define limits in a 2-category where the universal property holds up to natural isomorphism rather than strictly. This flexibility is essential when integrating local categories that satisfy their universal properties only up to isomorphism.
  \item \textbf{Pseudo-Colimits:} Similarly, pseudo-colimits enable the construction of colimits in a way that accommodates the inherent "weakness" (i.e., up-to-isomorphism uniqueness) of the local structures.
\end{itemize}

\paragraph{Conclusion:}  
The integrated 2-category \(\mathcal{D}\) defined in Definition~\ref{defn:integrated_2category} provides a unifying framework in which the universal properties of local categories are preserved via pseudo-limits and pseudo-colimits. This construction not only maintains the essential logical semantics encoded in each local category but also ensures that coherence is managed through the introduction of appropriate 2-morphisms, setting the stage for further integration and strictification in subsequent chapters.

\newpage
\subsection{Definition and Role}
\label{subsec:pseudo_limits_colimits_definition_role}

In a 2-category, the concept of limits and colimits is generalized by allowing the required universal properties to hold only up to a specified natural isomorphism. These generalized constructions are known as \emph{pseudo-limits} and \emph{pseudo-colimits}.

\begin{defn}
Let \(D: \mathcal{J} \to \mathcal{C}\) be a 2-functor from an index 2-category \(\mathcal{J}\) to a 2-category \(\mathcal{C}\). A \emph{pseudo-limit} of \(D\) consists of:
\begin{enumerate}
  \item An object \(L\) in \(\mathcal{C}\),
  \item A family of 1-morphisms \(\{\pi_j: L \to D(j)\}_{j \in \operatorname{Ob}(\mathcal{J})}\),
  \item For each morphism \(u: j \to k\) in \(\mathcal{J}\), an invertible 2-morphism
  \[
  \theta_u: \pi_k \Longrightarrow D(u) \circ \pi_j,
  \]
\end{enumerate}
satisfying appropriate coherence conditions. Moreover, for any object \(X\) with a cone \(\{f_j: X \to D(j)\}\) and invertible 2-morphisms relating \(f_j\) with \(D(u) \circ f_k\) for every \(u: j \to k\), there exists a unique (up to a coherent isomorphism) 1-morphism \(u: X \to L\) such that
\[
f_j \cong \pi_j \circ u,
\]
for all \(j\in\operatorname{Ob}(\mathcal{J})\).
\end{defn}

\begin{defn}
Dually, a \emph{pseudo-colimit} of a 2-functor \(D: \mathcal{J} \to \mathcal{C}\) consists of:
\begin{enumerate}
  \item An object \(C\) in \(\mathcal{C}\),
  \item A family of 1-morphisms \(\{\iota_j: D(j) \to C\}_{j \in \operatorname{Ob}(\mathcal{J})}\),
  \item For each morphism \(u: j \to k\) in \(\mathcal{J}\), an invertible 2-morphism
  \[
  \psi_u: \iota_j \Longrightarrow \iota_k \circ D(u),
  \]
\end{enumerate}
satisfying dual coherence conditions. For any object \(X\) and cocone \(\{g_j: D(j) \to X\}\) with appropriate invertible 2-morphisms, there exists a unique 1-morphism \(v: C \to X\) (up to coherent isomorphism) such that
\[
g_j \cong v \circ \iota_j,
\]
for all \(j\in\operatorname{Ob}(\mathcal{J})\).
\end{defn}

\paragraph{Role and Relevance:}  
Pseudo-limits and pseudo-colimits allow us to capture universal properties in settings where strict equalities are too rigid. In many naturally occurring 2-categorical constructions, the requirement that diagrams commute strictly is unrealistic; instead, commutativity up to a coherent natural isomorphism is both necessary and sufficient. This flexibility is crucial when integrating local categories that model logical connectives:
\begin{itemize}
  \item They enable the preservation of universal properties (e.g., those of products, coproducts, and exponentials) even when the underlying structures are defined only up to isomorphism.
  \item They provide a systematic way to manage the coherence data arising from weak composition laws, ensuring that all associators, unitors, and related 2-morphisms interact consistently.
\end{itemize}

In summary, pseudo-limits and pseudo-colimits serve as essential tools in higher category theory, allowing for the construction of integrated 2-categories where universal properties are maintained in a flexible, yet coherent, manner.

\subsection{Examples in the Context of Logical Connectives}
\label{subsec:pseudo_limits_logical_connectives}

Pseudo-limits and pseudo-colimits allow us to express universal properties in a flexible way—up to coherent natural isomorphism—which is especially useful when modeling logical connectives in higher category theory. Below are examples that illustrate how these constructions capture the universality of products, coproducts, and exponentials.

\paragraph{Product as Logical Conjunction:}  
In a 2-category, the product \(A \times B\) is characterized by the universal property that for any object \(X\) and any pair of morphisms
\[
f: X \to A \quad \text{and} \quad g: X \to B,
\]
there exists a unique mediating morphism \(\langle f, g \rangle: X \to A \times B\) making the diagram commute up to natural isomorphism:
\[
\xymatrix@R=5em@C=5em{
X 
  \ar@/^1.5pc/[drr]^-{g} 
  \ar[dr]|-{\langle f, g \rangle} 
  \ar@/_1.5pc/[ddr]_-{f} & & \\
& A \times B \ar[r]_-{\pi_B} \ar[d]_-{\pi_A} & B \\
& A &
}
\]
This diagram expresses the pseudo-limit property of the product, corresponding to logical conjunction.

\paragraph{Coproduct as Logical Disjunction:}  
Dually, the coproduct \(A+B\) models logical disjunction. Given any object \(X\) and morphisms
\[
f: A \to X \quad \text{and} \quad g: B \to X,
\]
the universal property of the coproduct states that there is a unique mediating morphism \([f, g]: A+B \to X\) (up to coherent isomorphism) such that:
\[
\begin{tikzcd}[column sep=large, row sep=large]
A \arrow[r, "\iota_A"] \arrow[dr, "f"'] & A+B \arrow[d, dashed, "{[f, g]}"] & B \arrow[l, "\iota_B"'] \arrow[dl, "g"] \\
& X &
\end{tikzcd}
\]
This pseudo-colimit formulation captures the essence of logical disjunction.

\paragraph{Exponential as Logical Implication:}  
In a cartesian closed category, the exponential object \(B^A\) models logical implication. Its universal property is expressed by the existence of a unique (up to natural isomorphism) morphism \(\tilde{f}: X \to B^A\) for any morphism
\[
f: X \times A \to B,
\]
such that:
\[
\begin{tikzcd}[column sep=large, row sep=large]
X \times A \arrow[r, "f"] \arrow[d, "\tilde{f}\times\mathrm{id}_A"'] & B \\
B^A \times A \arrow[ur, "\mathrm{ev}"'] &
\end{tikzcd}
\]
This diagram illustrates the currying process, where \(B^A\) acts as the function space from \(A\) to \(B\), corresponding to logical implication.

\paragraph{Discussion:}  
These examples demonstrate that pseudo-limits and pseudo-colimits provide the necessary flexibility to capture universal properties in a 2-categorical framework. By allowing diagrams to commute up to a coherent natural isomorphism, they enable the modeling of logical connectives—such as conjunction, disjunction, and implication—in a way that mirrors the inherent "up-to-isomorphism" nature of many mathematical constructions.

\newpage
\section{Case Studies: Extending Specific Logical Connectives}
\subsection{Negation Category Extension}
\label{subsec:negation_category_extension}

In the classical negation category, each object is a pair \((A,\eta_A)\) where \(A\) is an object in the underlying category \(\mathcal{C}\) and \(\eta_A: A \to D\) is a fixed morphism to a dualizing object \(D\). To extend this structure into a 2-categorical context, we enrich both the morphisms and the universal properties by incorporating 2-morphisms that capture natural isomorphisms.

In the extended negation category \(\mathcal{N}\):
\begin{enumerate}
  \item \textbf{Objects:} Remain as pairs \((A,\eta_A)\) with \(\eta_A: A \to D\) representing the negation structure.
  \item \textbf{1-Morphisms:} A 1-morphism \(f: (A,\eta_A) \to (B,\eta_B)\) is a morphism \(f: A \to B\) in \(\mathcal{C}\) such that the following diagram commutes up to a specified natural isomorphism:
  \[
  \begin{tikzcd}[column sep=large, row sep=large]
  A \arrow[r, "f"] \arrow[d, "\eta_A"'] & B \arrow[d, "\eta_B"] \\
  D \arrow[r, equals] & D
  \end{tikzcd}
  \]
  \item \textbf{2-Morphisms:} For two 1-morphisms \(f, g: (A,\eta_A) \to (B,\eta_B)\), a 2-morphism \(\alpha: f \Rightarrow g\) is a 2-cell in the ambient 2-category which, together with the coherence data, guarantees that the negation structure is preserved. That is, the following diagram commutes up to higher coherence isomorphisms:
  \[
  \begin{tikzcd}[column sep=large, row sep=large]
  A \arrow[r, shift left=0.5ex, "f"] \arrow[r, shift right=0.5ex, "g"'] \arrow[d, "\eta_A"'] & B \arrow[d, "\eta_B"] \\
  D \arrow[r, equals] & D
  \end{tikzcd}
  \]
\end{enumerate}

\paragraph{Duality and Complementarity in the Extended Setting:}  
In this 2-categorical framework, the notion of duality is enhanced by the introduction of 2-morphisms. The weak commutativity in the defining diagram for negation is now witnessed by a natural isomorphism rather than strict equality. This approach captures the idea that the universal property of negation holds uniquely only up to a coherent isomorphism. As a result:
\begin{itemize}
  \item Different 1-morphisms that preserve the negation structure can be compared via 2-morphisms, allowing for a more flexible interpretation of logical complement.
  \item The coherence data provided by 2-morphisms ensures that all diagrams expressing the negation universal property commute in a weak sense, preserving the duality between an object and its complement.
\end{itemize}

\paragraph{Conclusion:}  
By lifting the classical negation category into a 2-categorical context, we obtain a framework where logical negation is modeled more faithfully. The duality and complementary aspects are preserved and enriched via natural isomorphisms, thus enabling a robust treatment of negation that integrates seamlessly with other logical connectives in a higher categorical setting.

\subsection{Product, Coproduct, and Exponential Categories}
\label{subsec:lifting_standard_categories}

The process of lifting product, coproduct, and exponential categories into a 2-categorical framework involves enriching the classical (1-categorical) constructions with additional layers of 2-morphisms. These 2-morphisms serve as natural isomorphisms that witness the universal properties in a flexible manner, ensuring that the essential features of these constructions are preserved even when strict commutativity is relaxed.

\paragraph{Lifting Process:}
For each of the standard constructions—product, coproduct, and exponential—the lifting process follows a similar strategy:
\begin{enumerate}
  \item \textbf{Enrichment of Hom-Categories:}  
  Replace the hom-sets in the classical category with hom-categories. In these enriched categories, 1-morphisms remain as the original morphisms, while 2-morphisms are natural transformations between these morphisms.
  
  \item \textbf{Universal Property Up to Isomorphism:}  
  In the 1-categorical setting, the universal property (e.g., the existence and uniqueness of the mediating morphism) holds strictly. When lifted to a 2-category, this property is weakened so that the relevant diagrams commute only up to a specified natural isomorphism. For example, for the product \(A \times B\), given any object \(X\) and morphisms \(f: X \to A\) and \(g: X \to B\), there exists a unique mediating 1-morphism \(\langle f, g \rangle: X \to A \times B\) along with a natural isomorphism ensuring that
  \[
  \pi_A \circ \langle f, g \rangle \cong f \quad \text{and} \quad \pi_B \circ \langle f, g \rangle \cong g.
  \]
  
  \item \textbf{Coherence via 2-Morphisms:}  
  The natural isomorphisms (2-morphisms) introduced in the lifting process provide the coherence data required to manage the weak commutativity of diagrams. These 2-morphisms ensure that any two different ways of constructing the universal morphisms are coherently isomorphic, thus preserving the intended universal property.
\end{enumerate}

\paragraph{Examples:}
\begin{itemize}
  \item \textbf{Product Categories:}  
  In the lifted product category, the classical product \(A \times B\) is enriched by defining 2-morphisms between any two mediating morphisms that satisfy the projection conditions. The associativity of products, for instance, is now governed by an associator 2-morphism, making the universal property hold up to coherent isomorphism.
  
  \item \textbf{Coproduct Categories:}  
  Similarly, the coproduct \(A+B\) is lifted by replacing the strict uniqueness of the mediating morphism with uniqueness up to a natural isomorphism. The injection morphisms \(\iota_A\) and \(\iota_B\) now come with coherence 2-morphisms that ensure any pair of morphisms from \(A\) and \(B\) into another object \(X\) factors uniquely (up to isomorphism) through \(A+B\).
  
  \item \textbf{Exponential Categories:}  
  In a cartesian closed category, the exponential object \(B^A\) is defined together with an evaluation morphism. When lifted into the 2-categorical setting, the universal property of exponentials (i.e., the currying correspondence between morphisms \(X \times A \to B\) and \(X \to B^A\)) holds up to a coherent natural isomorphism. This ensures that the process of currying is well-defined even when the underlying diagrams commute only weakly.
\end{itemize}

\paragraph{Conclusion:}
By enriching local categories with 2-morphisms and using pseudo-limits and pseudo-colimits, the lifting process transforms strict universal properties into ones that hold up to natural isomorphism. This approach preserves the essence of product, coproduct, and exponential constructions while providing the flexibility necessary for managing coherence in higher categorical structures.

\newpage
\section{Summary and Transition}
\label{sec:summary_transition_4}

In this chapter, we have extended our classical local categories into the 2-categorical framework. The key points of this extension include:

\begin{itemize}
  \item \textbf{Enrichment with 2-Morphisms:}  
  Each local category—whether modeling negation, product, coproduct, or exponential structures—has been enriched with 2-morphisms. These natural isomorphisms allow the universal properties to hold up to coherent isomorphism rather than strict equality.

  \item \textbf{Pseudo-Limits and Pseudo-Colimits:}  
  By employing pseudo-limits and pseudo-colimits, we have ensured that the essential universal properties of these constructions are maintained even in a weak (2-categorical) setting.

  \item \textbf{Preservation of Coherence:}  
  The introduction of associators, unitors, and other coherence isomorphisms guarantees that various compositions in the 2-categorical framework are compatible, thereby overcoming the limitations of strict 1-categories.

  \item \textbf{Integration Readiness:}  
  With local categories now lifted into a 2-categorical context, we have established a robust foundation for further integration. The flexible structure provided by 2-morphisms paves the way for unifying these diverse logical constructs into a cohesive global framework.
\end{itemize}

In the next chapter, we will delve into 2-categorical composition and integration, where the enriched structure of our local categories will be composed and combined. This will involve detailed analysis of the horizontal and vertical compositions of 2-morphisms, as well as the methods to ensure coherence across the integrated framework.

\chapter{2-Category Composition and Integration of Local Categories}
\section{Introduction to 2-Category Composition and Integration}
\subsection{Motivation and Objectives}
\label{subsec:motivation_objectives}

In modern categorical logic, local categories have been constructed to model individual logical connectives—such as negation, conjunction, disjunction, and implication—each with its own universal property. However, to capture the full semantics of logical systems, it is essential to integrate these diverse structures into a unified framework. This motivates the extension of local categories into a 2-categorical setting.

The main objectives of this integration are:
\begin{itemize}
  \item \textbf{Preservation of Universal Properties:}  
  Ensure that the universal properties (e.g., those of products, coproducts, and exponentials) are maintained up to coherent natural isomorphism when local categories are lifted into the 2-categorical framework.
  
  \item \textbf{Ensuring Coherence via 2-Morphisms:}  
  Introduce 2-morphisms that serve as natural isomorphisms to manage the inherent flexibility of composition. These 2-morphisms guarantee that all associativity and unit laws hold up to coherent isomorphism, thereby resolving the limitations of strict equality in 1-categories.
  
  \item \textbf{Unified Structural Integration:}  
  Combine the enriched local categories into a global 2-category that facilitates seamless interactions among the various logical connectives, enabling a coherent categorical semantics for complex logical systems.
\end{itemize}

By achieving these goals, the integration process will provide a robust framework where the logical operations and their interrelations are modeled in a manner that reflects both their individual universal properties and their global coherence.

\newpage
\subsection{Overview of the Integration Process}
\label{subsec:integration_overview}

The integration process of local categories into a unified 2-category is achieved by systematically combining various composition techniques. The following roadmap outlines the main steps and tools used in this integration:

\begin{enumerate}
  \item \textbf{Direct Product Composition:}  
  Initially, local categories are combined via the direct product, which provides a straightforward way to merge objects and morphisms from different sources while preserving their structure.

  \item \textbf{Bifunctors:}  
  Bifunctors are then employed to mediate between pairs of local categories. These functors act on both objects and morphisms simultaneously, ensuring that the interactions between different logical connectives (e.g., conjunction and disjunction) are coherently managed.

  \item \textbf{Pseudo-Limits and Pseudo-Colimits:}  
  Finally, pseudo-limits and pseudo-colimits are used to glue together the local categories in a flexible manner. Since universal properties in higher category theory often hold only up to natural isomorphism, these constructions allow the integrated structure to maintain its universal properties and coherence without requiring strict equality.
\end{enumerate}

Collectively, these techniques yield an integrated 2-category that faithfully reflects the universal properties and logical semantics inherent in the individual local categories.

\newpage
\subsection{Direct Product and Its Role in Integration}
\label{subsec:direct_product_integration}

The direct product is a fundamental construction that provides a natural way to combine objects from different local categories. In a 1-category, given objects \(A\) and \(B\) in a category \(\mathcal{C}\), their direct product \(A \times B\) is defined by the existence of projection morphisms
\[
\pi_A: A \times B \to A \quad \text{and} \quad \pi_B: A \times B \to B,
\]
satisfying the universal property: for any object \(X\) and any pair of morphisms \(f: X \to A\) and \(g: X \to B\), there exists a unique mediating morphism \(\langle f, g \rangle: X \to A \times B\) such that
\[
\pi_A \circ \langle f, g \rangle = f \quad \text{and} \quad \pi_B \circ \langle f, g \rangle = g.
\]

When we lift local categories into a 2-categorical setting, the direct product construction is enriched in the following ways:
\begin{itemize}
  \item \textbf{Enrichment of Hom-Categories:}  
  In the 2-category, the hom-sets are replaced by hom-categories where 1-morphisms are as in the 1-category, and 2-morphisms (natural isomorphisms) relate different factorizations. The universal property of the product then holds up to coherent isomorphism.
  
  \item \textbf{Preservation of Universal Properties:}  
  The projection 1-morphisms \(\pi_A\) and \(\pi_B\) now come with associated 2-morphisms ensuring that for any object \(X\) with 1-morphisms \(f: X \to A\) and \(g: X \to B\), there exists a unique mediating 1-morphism (up to a coherent natural isomorphism) that factors the cone through \(A \times B\).
  
  \item \textbf{Foundation for Integration:}  
  The direct product serves as a basic building block in the integration process. By combining local structures through direct products, we set the stage for more complex constructions (e.g., pseudo-limits and bifunctors) that integrate these structures into a unified 2-category.
\end{itemize}

In summary, the direct product not only merges objects from different local categories but also preserves their universal properties via natural isomorphisms. This enrichment is essential for integrating local categories into a global 2-categorical framework where the interplay of 1-morphisms and 2-morphisms ensures overall coherence.

\subsection{Bifunctors for Mapping Local Categories}
\label{subsec:bifunctors_mapping}

Bifunctors provide a systematic way to integrate local categories into a unified 2-categorical framework. In our setting, a bifunctor acts on pairs of objects and morphisms from two local categories and maps them into the integrated category while preserving the structure and universal properties inherent in each local category.

Suppose we have two local categories \(\mathcal{C}\) and \(\mathcal{D}\). A bifunctor
\[
F: \mathcal{C} \times \mathcal{D} \to \mathcal{E}
\]
is defined by:
\begin{itemize}
  \item \textbf{On Objects:} For each pair \((A, B)\) with \(A \in \operatorname{Ob}(\mathcal{C})\) and \(B \in \operatorname{Ob}(\mathcal{D})\), the bifunctor assigns an object \(F(A, B) \in \operatorname{Ob}(\mathcal{E})\).
  \item \textbf{On Morphisms:} For morphisms \(f: A \to A'\) in \(\mathcal{C}\) and \(g: B \to B'\) in \(\mathcal{D}\), it assigns a morphism
  \[
  F(f, g): F(A, B) \to F(A', B')
  \]
  in \(\mathcal{E}\).
\end{itemize}

The bifunctor \(F\) satisfies the following coherence conditions:
\begin{align*}
F(\mathrm{id}_A, \mathrm{id}_B) &= \mathrm{id}_{F(A,B)}, \quad \text{and} \\
F(f_2 \circ f_1,\, g_2 \circ g_1) &= F(f_2, g_2) \circ F(f_1, g_1),
\end{align*}
ensuring that the composition and identity structures of the local categories are preserved in \(\mathcal{E}\).

\paragraph{Preservation of Universal Properties:}  
When local categories come equipped with universal properties—such as the existence of products, coproducts, or exponentials—the corresponding universal diagrams are mapped into \(\mathcal{E}\) via \(F\) in such a way that these properties hold up to natural isomorphism. For example, if \(\mathcal{C}\) has a product \(A \times B\) with its universal cone, then \(F\) maps this cone to a cone in \(\mathcal{E}\) that satisfies the product universal property up to a coherent 2-isomorphism.

\paragraph{Role in Integration:}  
By mapping local categories into a global integrated structure, bifunctors serve as the glue that connects different logical connectives. They ensure that the key properties—such as the unique factorization of morphisms (up to natural isomorphism)—are maintained throughout the integration process. In this way, bifunctors enable us to:
\begin{itemize}
  \item Combine the structures of different local categories without losing their intrinsic universal properties.
  \item Systematically lift 1-categorical data into the 2-categorical setting, where natural isomorphisms (2-morphisms) handle the necessary flexibility and coherence.
\end{itemize}

Overall, the construction of bifunctors is essential for integrating local categories into a unified 2-category, as they provide the framework needed to preserve both structure and universal properties across the integrated system.

\newpage
\subsection{Definition and Theoretical Background}
\label{subsec:pseudo_limits_colimits_background}

In a 2-category, the classical notions of limits and colimits are generalized to \emph{pseudo-limits} and \emph{pseudo-colimits} in order to accommodate the inherent weakness of 2-categorical structures. Rather than requiring that diagrams commute strictly, these constructions allow them to commute up to a coherent natural isomorphism. This relaxation is essential when working with 2-categories and bicategories, where composition and identities hold only up to specified 2-morphisms.

\begin{defn}
\label{defn:pseudo_limit}
Let \(D: \mathcal{J} \to \mathcal{C}\) be a 2-functor from an index 2-category \(\mathcal{J}\) to a 2-category \(\mathcal{C}\). A \emph{pseudo-limit} of \(D\) consists of:
\begin{enumerate}
  \item An object \(L\) of \(\mathcal{C}\),
  \item A family of 1-morphisms \(\{\pi_j: L \to D(j)\}_{j \in \operatorname{Ob}(\mathcal{J})}\),
  \item For each 1-morphism \(u: j \to k\) in \(\mathcal{J}\), an invertible 2-morphism 
  \[
  \theta_u: \pi_k \Longrightarrow D(u) \circ \pi_j,
  \]
\end{enumerate}
such that for any other object \(X\) equipped with a cone \(\{f_j: X \to D(j)\}\) and invertible 2-morphisms \(\{\phi_u: f_k \Longrightarrow D(u) \circ f_j\}\) satisfying the analogous coherence conditions, there exists a unique (up to a unique invertible 2-morphism) 1-morphism \(u: X \to L\) making all the corresponding diagrams commute up to the given 2-morphisms.
\end{defn}

\begin{defn}
\label{defn:pseudo_colimit}
Dually, a \emph{pseudo-colimit} of a 2-functor \(D: \mathcal{J} \to \mathcal{C}\) is defined by an object \(C\) in \(\mathcal{C}\) together with:
\begin{enumerate}
  \item A family of 1-morphisms \(\{\iota_j: D(j) \to C\}_{j \in \operatorname{Ob}(\mathcal{J})}\),
  \item For each 1-morphism \(u: j \to k\) in \(\mathcal{J}\), an invertible 2-morphism 
  \[
  \psi_u: \iota_j \Longrightarrow \iota_k \circ D(u),
  \]
\end{enumerate}
such that for any other object \(X\) with a cocone \(\{g_j: D(j) \to X\}\) and corresponding coherent 2-morphisms, there exists a unique (up to a unique invertible 2-morphism) 1-morphism \(v: C \to X\) satisfying
\[
g_j \cong v \circ \iota_j
\]
for all \(j\in\operatorname{Ob}(\mathcal{J})\).
\end{defn}

\paragraph{Relevance in Ensuring Flexible Universal Properties:}  
Pseudo-limits and pseudo-colimits are indispensable in higher category theory because they allow us to retain the essence of universal constructions without the rigidity imposed by strict equalities. Instead, the universal properties are satisfied up to coherent isomorphisms, which is more in line with the natural behavior of many mathematical structures. This flexibility is particularly critical when integrating local categories into a global 2-categorical framework, as it:
\begin{itemize}
  \item Preserves the universal properties of the local structures (e.g., products, coproducts, exponentials) in a manner that respects their inherent "up-to-isomorphism" uniqueness.
  \item Facilitates the management of coherence by systematically employing 2-morphisms to mediate between different compositional paths.
\end{itemize}

In summary, pseudo-limits and pseudo-colimits provide the theoretical foundation needed to extend universal constructions from 1-categories to 2-categories, thereby ensuring that the essential properties of local categories are maintained within a flexible yet coherent global framework.

\subsection{Application to Logical Connectives}
\label{subsec:application_logical_connectives}

Pseudo-limits and pseudo-colimits are instrumental in capturing the flexible universal properties of standard constructions—such as products, coproducts, and exponential objects—in a 2-categorical setting. In the context of logical connectives, these constructions provide a categorical semantics where the universal properties hold up to coherent isomorphism.

\paragraph{Products (Logical Conjunction):}
In a 2-category, the product \(A \times B\) is defined with projection 1-morphisms
\[
\pi_A: A \times B \to A \quad \text{and} \quad \pi_B: A \times B \to B,
\]
such that for any object \(X\) and a pair of 1-morphisms \(f: X \to A\) and \(g: X \to B\), there exists a unique mediating 1-morphism \(\langle f, g \rangle: X \to A \times B\) along with a coherent 2-morphism expressing that
\[
\pi_A \circ \langle f, g \rangle \cong f \quad \text{and} \quad \pi_B \circ \langle f, g \rangle \cong g.
\]
Here, the pseudo-limit aspect allows the diagram to commute up to a natural isomorphism, which mirrors the logical interpretation of conjunction that “\(A\) and \(B\)” holds if both \(A\) and \(B\) are satisfied, modulo a coherent equivalence.

\paragraph{Coproducts (Logical Disjunction):}
Dually, the coproduct \(A+B\) is constructed with injection 1-morphisms
\[
\iota_A: A \to A+B \quad \text{and} \quad \iota_B: B \to A+B.
\]
For any object \(X\) and any pair of 1-morphisms \(f: A \to X\) and \(g: B \to X\), the pseudo-colimit property guarantees a unique 1-morphism \([f, g]: A+B \to X\) such that
\[
[f, g] \circ \iota_A \cong f \quad \text{and} \quad [f, g] \circ \iota_B \cong g.
\]
This weak universal property captures the essence of logical disjunction, where “\(A\) or \(B\)” is satisfied in a way that is unique up to a coherent natural isomorphism.

\paragraph{Exponential Objects (Logical Implication):}
In a cartesian closed 2-category, the exponential object \(B^A\) is defined along with an evaluation 1-morphism
\[
\mathrm{ev}: B^A \times A \to B.
\]
For any object \(X\) and any 1-morphism \(f: X \times A \to B\), there is a unique 1-morphism \(\tilde{f}: X \to B^A\) (the currying of \(f\)) such that the diagram
\[
\begin{tikzcd}[column sep=large, row sep=large]
X \times A \arrow[r, "f"] \arrow[d, "\tilde{f}\times \mathrm{id}_A"'] & B \\
B^A \times A \arrow[ur, "\mathrm{ev}"'] &
\end{tikzcd}
\]
commutes up to a natural isomorphism. The pseudo-limit structure here ensures that the equivalence between the mapping \(f\) and its curried form \(\tilde{f}\) holds in a flexible manner, corresponding to the logical notion of implication.

\paragraph{Summary:}
By allowing the universal properties to hold up to coherent isomorphism rather than strictly, pseudo-limits and pseudo-colimits provide the necessary flexibility to model logical connectives in a higher categorical setting. This approach preserves the essential behavior of products, coproducts, and exponentials—thus faithfully representing logical conjunction, disjunction, and implication—while accommodating the inherent "up-to-isomorphism" nature of 2-categorical structures.

\subsection{Formal Construction of the Integrated Category}
\label{subsec:formal_construction_integrated}

We now present the formal procedure for constructing the integrated 2-category \(\mathcal{I}\) from a collection of local categories \(\{\mathcal{C}_i\}_{i\in I}\), each endowed with its own universal constructions (products, coproducts, exponentials, negation, etc.). The key idea is to use pseudo-limits and pseudo-colimits to “glue” these local structures together in a way that preserves their universal properties up to coherent natural isomorphism.

\paragraph{Step 1: Object Formation.}  
Define the objects of \(\mathcal{I}\) as the disjoint union of the objects of each local category:
\[
\operatorname{Ob}(\mathcal{I}) = \bigsqcup_{i\in I}\operatorname{Ob}(\mathcal{C}_i).
\]
Each object retains its local structure (e.g., the negation mapping, product structure, etc.) within its originating category.

\paragraph{Step 2: Hom-Categories via Pseudo-Limits/Colimits.}  
For any two objects \(A\) and \(B\) in \(\mathcal{I}\), define the hom-category \(\mathcal{I}(A,B)\) as follows:
\begin{itemize}
  \item If \(A\) and \(B\) both belong to the same local category \(\mathcal{C}_i\), then set
  \[
  \mathcal{I}(A,B) := \mathcal{C}_i(A,B).
  \]
  \item If \(A\) and \(B\) belong to different local categories (or when integrating overlapping structure from different connectives), construct \(\mathcal{I}(A,B)\) as a pseudo-limit or pseudo-colimit of the diagram formed by the corresponding hom-categories. That is, let \(D: \mathcal{J} \to \mathbf{Cat}\) be a diagram whose objects are the relevant hom-categories, and define
  \[
  \mathcal{I}(A,B) = \mathrm{pLim}\, D \quad \text{or} \quad \mathcal{I}(A,B) = \mathrm{pColim}\, D,
  \]
  ensuring that the universal properties hold up to natural isomorphism.
\end{itemize}

\paragraph{Step 3: Composition of 1-Morphisms and 2-Morphisms.}  
The composition in \(\mathcal{I}\) is defined using the universal properties provided by the pseudo-limits and pseudo-colimits. Specifically, for composable 1-morphisms
\[
f: A \to B \quad \text{and} \quad g: B \to C,
\]
their composite \(g\circ f: A \to C\) is given by the unique (up to coherent isomorphism) mediating 1-morphism obtained from the pseudo-limit (or pseudo-colimit) construction on the corresponding diagram. Similarly, the vertical and horizontal compositions of 2-morphisms are inherited from the corresponding compositions in the local hom-categories, with coherence ensured by the natural isomorphisms in the pseudo-constructions.

\paragraph{Step 4: Coherence Data and Compatibility.}  
To complete the construction, we specify coherence isomorphisms (associators, unitors, etc.) in \(\mathcal{I}\) by:
\begin{enumerate}
  \item Invoking the coherence isomorphisms already present in the local categories.
  \item Extending these isomorphisms to the integrated hom-categories via the universal property of the pseudo-limits/colimits.  
\end{enumerate}
These coherence data ensure that all diagrams (such as the pentagon and triangle diagrams) commute up to a coherent 2-isomorphism, thereby maintaining the integrity of the integrated structure.

\paragraph{Conclusion:}  
The integrated 2-category \(\mathcal{I}\) constructed via the above steps successfully unifies the local categories by:
\begin{itemize}
  \item Combining their objects in a disjoint union,
  \item Forming enriched hom-categories via pseudo-limits and pseudo-colimits,
  \item Defining composition that respects the universal properties up to coherent isomorphism, and
  \item Incorporating the necessary coherence data.
\end{itemize}
This framework preserves the universal properties of products, coproducts, exponentials, and other logical constructions while accommodating the flexibility required by weak (2-categorical) structures.

\newpage
\subsection{Coherence Diagrams in the Integrated Setting}
\label{subsec:coherence_diagrams}

To ensure that the integrated 2-category maintains its logical and categorical structure, several key coherence diagrams must commute (up to natural isomorphism). These diagrams verify that the various ways of composing 1-morphisms and 2-morphisms yield equivalent results. Below are a couple of representative examples.

\paragraph{Triangle Coherence Diagram (Unitors):}  
In any bicategory or 2-category, the left and right unitors must satisfy the triangle identity. For any 1-morphisms \(f: A \to B\) and \(g: B \to C\), the following triangle must commute up to a natural isomorphism:
\[
\begin{tikzcd}[column sep=huge]
(f \circ \mathrm{id}_A) \circ g \arrow[rr, "a_{f,\mathrm{id}_A,g}"] \arrow[dr, "r_f \circ \mathrm{id}_g"'] & & f \circ (\mathrm{id}_A \circ g) \arrow[dl, "\mathrm{id}_f \circ l_g"] \\
& f \circ g &
\end{tikzcd}
\]
Here, \(a_{f,\mathrm{id}_A,g}\) denotes the associator 2-morphism, while \(r_f\) and \(l_g\) denote the right and left unitors, respectively. This diagram ensures that the unit constraints are compatible with the associativity of composition.

\paragraph{Square Coherence Diagram (Interchange Law):}  
Another fundamental coherence condition is the interchange law, which governs the interaction between vertical and horizontal compositions of 2-morphisms. For 2-morphisms \(\alpha, \alpha': f \Rightarrow f'\) and \(\beta, \beta': g \Rightarrow g'\), the following square illustrates that the horizontal composition of the vertical composites equals the vertical composite of the horizontal compositions:
\[
\begin{tikzcd}[column sep=huge, row sep=huge]
g \circ f \arrow[r, Rightarrow, "\alpha \ast \beta"] \arrow[d, Rightarrow, "\beta' \circ \alpha"'] & g' \circ f \arrow[d, Rightarrow, "\beta' \ast \alpha"] \\
g \circ f' \arrow[r, Rightarrow, "\beta \ast \alpha'"'] & g' \circ f'
\end{tikzcd}
\]
(Here, the double arrows represent the 2-morphisms and their compositions.) This diagram encapsulates the interchange law, a key condition ensuring that the two ways of composing 2-morphisms (horizontally then vertically, or vice versa) are coherently equivalent.

\paragraph{Discussion:}  
These coherence diagrams are not merely technical artifacts—they ensure that the universal properties, as well as the associativity and unit conditions of the integrated 2-category, are maintained in a flexible manner. By requiring these diagrams to commute up to natural isomorphism, the framework accommodates the inherent "weakness" of higher categorical structures while still preserving a robust and consistent semantic foundation.

\newpage
\subsection{Verification of Multiple Composition Paths}
\label{subsec:verification_multiple_paths}

Consider three composable 1-morphisms in a 2-category:
\[
f: A \to B,\quad g: B \to C,\quad h: C \to D.
\]
There are two natural ways to compose these morphisms:
\[
u = (h \circ g) \circ f \quad \text{and} \quad v = h \circ (g \circ f).
\]
In a strict 2-category these would be equal; however, in a weak setting they are related by the \emph{associator} 2-morphism
\[
a_{f,g,h}: (h \circ g) \circ f \Rightarrow h \circ (g \circ f).
\]
This associator is a natural isomorphism that ensures that all different composition routes are coherently equivalent.

A typical coherence diagram illustrating this equivalence is given by:
\[
\begin{tikzcd}[column sep=large, row sep=large]
(h \circ g) \circ f \arrow[rr, "a_{f,g,h}"] \arrow[dr, dashed, "\alpha"'] & & h \circ (g \circ f) \\
& h \circ (g \circ f) \arrow[ur, equal] &
\end{tikzcd}
\]
Here, the dashed arrow represents a (possibly trivial) 2-morphism showing that the two paths from \((h \circ g) \circ f\) to \(h \circ (g \circ f)\) are equivalent. In a fully coherent 2-category, such equivalences are required to satisfy additional coherence conditions (like the pentagon identity) ensuring that all higher compositions are compatible.

\paragraph{Interpretation:}  
The existence of the associator \(a_{f,g,h}\) and similar coherence 2-morphisms (such as unitors) demonstrates that even though there are multiple ways to compose 1-morphisms in a weak 2-category, they are all equivalent up to a unique, coherent natural isomorphism. This flexibility is essential for modeling complex structures where strict associativity or identity does not hold, while still maintaining overall consistency.

\subsection{Discussion on the Role of Natural Isomorphisms}
\label{subsec:role_nat_iso}

Natural isomorphisms (2-morphisms) play a pivotal role in ensuring the overall consistency of an integrated 2-categorical structure. In contrast to strict categories, where equations hold exactly, 2-categories and bicategories allow equations to hold up to a specified natural isomorphism. This flexibility is essential in capturing the true behavior of many mathematical constructions and logical connectives. The following points elaborate on their significance:

\paragraph{Ensuring Coherence:}  
Natural isomorphisms provide the necessary coherence data by linking different composition routes. For example, while the composition of 1-morphisms might not be strictly associative, the existence of associator 2-morphisms (and corresponding unitors) guarantees that all ways of composing a sequence of 1-morphisms are coherently isomorphic. This is formally encapsulated in coherence conditions such as the pentagon and triangle identities, which ensure that any diagram constructed using associators and unitors commutes up to a unique natural isomorphism.

\paragraph{Preserving Universal Properties:}  
When universal properties (such as those defining products, coproducts, and exponentials) are extended into the 2-categorical setting, the uniqueness conditions are replaced by uniqueness up to natural isomorphism. The natural isomorphisms ensure that even though the mediating morphisms may not be strictly unique, they are uniquely determined in a coherent manner. This preservation is crucial for maintaining the logical semantics associated with these constructions.

\paragraph{Facilitating Integration:}  
In the process of integrating local categories into a global 2-category, various structures (each with its own universal properties) must be combined. Natural isomorphisms serve as the "glue" that binds these disparate structures together, ensuring that the transitions between local and integrated levels respect the intended logical relationships. They allow for the flexible composition of morphisms from different local categories, while ensuring that the resulting integrated structure is consistent and well-behaved.

\paragraph{Abstract Consistency:}  
Overall, natural isomorphisms embody the notion that “equality” in higher category theory is replaced by a coherent equivalence. This shift from strict equality to equivalence up to isomorphism is not a loss of information but rather a more accurate reflection of many naturally occurring mathematical phenomena. In this way, the integrated structure remains robust and reliable, as all potential discrepancies in composition are systematically resolved via natural isomorphisms.

\noindent
In summary, natural isomorphisms are indispensable in the integrated 2-category, ensuring that the universal properties and coherence conditions are maintained across various layers of the structure. Their role is fundamental in achieving a consistent, flexible, and logically sound framework in higher category theory.

\newpage
\section{Case Studies and Examples}
\subsection{Integration of Product and Coproduct Categories}
\label{subsec:integration_prod_coproduct}

In our integrated 2-categorical framework, the local structures modeling product and coproduct operations are combined via pseudo-limit and pseudo-colimit constructions. Recall that in the product category, for any objects \(A\) and \(B\), the product \(A \times B\) is defined by the existence of projection 1-morphisms
\[
\pi_A: A \times B \to A \quad \text{and} \quad \pi_B: A \times B \to B,
\]
satisfying the universal property: for any object \(X\) with 1-morphisms \(f: X \to A\) and \(g: X \to B\), there exists a unique mediating 1-morphism \(\langle f, g \rangle: X \to A \times B\) (up to a coherent natural isomorphism) making
\[
\pi_A \circ \langle f, g \rangle \cong f \quad \text{and} \quad \pi_B \circ \langle f, g \rangle \cong g.
\]

Dually, in the coproduct category, the coproduct \(A+B\) is defined by injection 1-morphisms
\[
\iota_A: A \to A+B \quad \text{and} \quad \iota_B: B \to A+B,
\]
with the universal property that for any object \(X\) and any pair of 1-morphisms \(f: A \to X\) and \(g: B \to X\), there exists a unique 1-morphism \([f, g]: A+B \to X\) (again, up to a coherent natural isomorphism) such that
\[
[f, g] \circ \iota_A \cong f \quad \text{and} \quad [f, g] \circ \iota_B \cong g.
\]

\paragraph{Integration via Pseudo-Constructions:}  
To integrate these local structures into a unified 2-category \(\mathcal{I}\), we proceed as follows:
\begin{enumerate}
  \item \textbf{Object Formation:}  
  The objects of \(\mathcal{I}\) are taken to be the disjoint union of the objects from the product and coproduct categories.
  
  \item \textbf{Hom-Categories via Pseudo-Limits/Colimits:}  
  For any pair of objects in \(\mathcal{I}\), the hom-category is constructed using pseudo-limit (or pseudo-colimit) techniques, ensuring that the universal properties of products and coproducts are preserved up to a natural isomorphism.
  
  \item \textbf{Compatibility via 2-Morphisms:}  
  Additional 2-morphisms are introduced to relate the product and coproduct structures, guaranteeing that any morphism interacting with these constructions factors uniquely (up to coherent isomorphism) through the integrated structure.
\end{enumerate}

\paragraph{Illustrative Examples:}  
The following diagrams represent the universal properties of the product and coproduct constructions, respectively, in the integrated setting.

\medskip

\textbf{Product Diagram:}
\[
\xymatrix@R=5em@C=5em{
X 
  \ar@/^1.5pc/[drr]^-{g} 
  \ar[dr]|-{\langle f, g \rangle} 
  \ar@/_1.5pc/[ddr]_-{f} & & \\
& A \times B \ar[r]_-{\pi_B} \ar[d]_-{\pi_A} & B \\
& A &
}
\]

\medskip

\textbf{Coproduct Diagram:}
\[
\begin{tikzcd}[column sep=large, row sep=large]
A \arrow[r, "\iota_A"] \arrow[dr, "f"'] & A+B \arrow[d, dashed, "{[f, g]}"] & B \arrow[l, "\iota_B"'] \arrow[dl, "g"] \\
& X &
\end{tikzcd}
\]

\paragraph{Conclusion:}  
By using pseudo-limits and pseudo-colimits, the integrated 2-category \(\mathcal{I}\) retains the universal properties of the product and coproduct constructions—essential for modeling logical conjunction and disjunction—while allowing the flexibility of “up-to-isomorphism” commutativity. The incorporation of natural isomorphisms (2-morphisms) ensures that the overall integrated structure is coherent, providing a robust foundation for further logical and categorical developments.

\subsection{Integration of the Exponential Category}
\label{subsec:integration_exponential}

In a cartesian closed category, the exponential object \(B^A\) together with the evaluation morphism 
\[
\mathrm{ev}: B^A \times A \to B
\]
encapsulates the notion of logical implication via the process of currying. When integrating exponential objects into the global 2-categorical framework, our goal is to preserve their universal property up to coherent natural isomorphism. The integration proceeds as follows:

\begin{enumerate}
  \item \textbf{Lifting to a 2-Categorical Structure:}  
  The local exponential category is enriched by promoting its hom-sets to hom-categories. In this setting, a morphism \(f: X \times A \to B\) has a unique (up to a natural isomorphism) currying \(\tilde{f}: X \to B^A\) such that:
  \[
  f = \mathrm{ev} \circ (\tilde{f} \times \mathrm{id}_A).
  \]
  This currying process is expressed via a natural isomorphism, which is incorporated as a 2-morphism in the integrated structure.
  
  \item \textbf{Preservation of Evaluation:}  
  The evaluation map \(\mathrm{ev}\) is lifted as a 1-morphism in the integrated 2-category. Its role in “applying” a function to an argument is maintained by ensuring that for every \(f: X \times A \to B\) the corresponding diagram
  \[
  \begin{tikzcd}[column sep=large, row sep=large]
  X \times A \arrow[r, "f"] \arrow[d, "\tilde{f}\times \mathrm{id}_A"'] & B \\
  B^A \times A \arrow[ur, "\mathrm{ev}"'] &
  \end{tikzcd}
  \]
  commutes up to a coherent 2-morphism. This guarantees that the evaluation operation remains consistent within the integrated setting.
  
  \item \textbf{Coherence via Pseudo-Limits:}  
  The universal property of exponentials is maintained using pseudo-limits. The pseudo-limit construction allows the currying correspondence to hold up to natural isomorphism, ensuring that any alternative way of factorizing a morphism \(f\) through \(B^A\) is coherently equivalent to \(\tilde{f}\).  
\end{enumerate}

Thus, the integrated exponential category successfully carries over the currying process and evaluation map from the local setting to the global 2-category, preserving the universal properties of exponential objects and enabling a categorical treatment of logical implication in a flexible, coherent framework.

\newpage
\subsection{Comparative Analysis}
\label{subsec:comparative_analysis}

The integration of local categories into a unified 2-categorical framework offers several significant advantages over traditional 1-categorical approaches. Below is a comparative analysis highlighting these benefits:

\paragraph{Flexibility of Universal Properties:}  
In traditional 1-categories, universal properties (such as those of products, coproducts, and exponentials) are required to hold strictly—diagrams must commute exactly. In contrast, the 2-category framework allows these universal properties to hold \emph{up to a natural isomorphism}. This flexibility is crucial because many natural constructions in mathematics and logic only satisfy their universal conditions up to isomorphism, not strict equality.

\paragraph{Enhanced Coherence and Consistency:}  
1-categorical models lack the structure to adequately manage coherence issues arising from multiple composition paths. The introduction of 2-morphisms in 2-categories provides the necessary coherence data (e.g., associators and unitors) that ensure all different compositional routes yield equivalent outcomes. This ensures that the integrated structure remains consistent even when strict commutativity is relaxed.

\paragraph{Modularity in Integration:}  
The 2-categorical approach facilitates the integration of diverse local categories—each modeling different logical connectives—by using pseudo-limits and pseudo-colimits. These constructions allow for a modular assembly where the local universal properties are preserved and coherently interrelated. Traditional methods often struggle to accommodate such modularity without resorting to ad hoc or overly rigid constructions.

\paragraph{Simplified Reasoning and Strictification:}  
Once integrated, the resulting 2-category can be further simplified via strictification, transforming the weak structure into a strict 2-category without losing essential properties. This process greatly simplifies subsequent reasoning and proof construction, which is particularly beneficial when handling complex logical semantics. In a 1-categorical approach, such simplification is typically unavailable or forced by unnatural constraints.

\paragraph{Summary of Advantages:}  
Overall, the 2-category framework:
\begin{itemize}
  \item Captures universal properties more naturally by allowing commutativity up to isomorphism.
  \item Provides explicit coherence data through 2-morphisms that ensure consistency across multiple composition paths.
  \item Offers a modular and flexible method for integrating local structures, making it well-suited for modeling complex logical connectives.
  \item Enables further simplification through strictification, thus streamlining reasoning and proof processes.
\end{itemize}

These advantages illustrate why the 2-categorical integration of local categories is a powerful and more faithful approach to modeling logical semantics compared to traditional 1-categorical methods.

\newpage
\section{Summary and Transition}
\label{sec:summary_transition}

In this chapter, we have detailed the process of integrating local categories into a unified 2-categorical framework. Key points include:

\begin{itemize}
  \item The \textbf{lifting of local categories}—each capturing logical connectives such as negation, product, coproduct, and exponential—into the 2-categorical setting by enriching them with 2-morphisms.
  \item The use of \textbf{pseudo-limits and pseudo-colimits} to preserve universal properties in a flexible manner, ensuring that diagrams commute up to natural isomorphism.
  \item The establishment of \textbf{coherence conditions} via natural isomorphisms (associators, unitors, etc.), which guarantee that multiple composition routes yield equivalent results.
  \item The application of \textbf{bifunctors} to map and integrate the local categories into the global 2-category, preserving both structure and universal properties.
\end{itemize}

These steps ensure that the integrated structure is both consistent and faithful to the logical semantics encoded in the local categories. In the next chapter, we will explore strictification techniques and further evaluate the integrated structure, setting the stage for streamlined reasoning and more robust categorical logic.

\chapter{Coherence Verification in the Integrated Category}
\section{Introduction to Coherence Verification}
\subsection{Motivation and Importance}
\label{subsec:coherence_motivation}

In any integrated 2-categorical framework, ensuring coherence is essential for the following reasons:

\begin{itemize}
  \item \textbf{Consistency of Composition:}  
  In a 2-category, 1-morphisms and 2-morphisms can be composed in multiple ways. Coherence conditions guarantee that all these different composition routes lead to the same result up to a natural isomorphism. Without such conditions, the integrated structure could become ambiguous, undermining the reliability of categorical reasoning.

  \item \textbf{Preservation of Universal Properties:}  
  Universal properties in local categories (such as those of products, coproducts, and exponentials) often hold only up to isomorphism. Coherence ensures that these properties are preserved consistently when local categories are integrated into a global 2-category, allowing us to reason about logical connectives and other constructions in a robust manner.

  \item \textbf{Flexibility in Weak Structures:}  
  In many higher categorical settings, equations are replaced by isomorphisms. Coherence plays a critical role in managing these isomorphisms so that, despite the inherent flexibility, the overall structure behaves in a predictable and controllable way.

  \item \textbf{Facilitating Strictification:}  
  Coherence verification is a prerequisite for strictification procedures, which convert weak 2-categorical structures into strict ones without losing essential properties. This conversion simplifies further constructions and proofs while maintaining the intended semantic content.
\end{itemize}

By ensuring that all necessary diagrams commute up to natural isomorphism, the process of coherence verification underpins the reliability and consistency of the integrated 2-categorical framework. This foundational step is critical for the subsequent stages of strictification and further evaluation of the logical semantics embodied in the integrated category.

\subsection{Overview of Coherence Conditions}
\label{subsec:overview_coherence_conditions}

In the integrated 2-category, several key coherence diagrams must commute (up to natural isomorphism) to ensure that the various ways of composing 1-morphisms and 2-morphisms yield consistent results. Below is an overview of the principal diagrams that are verified in this framework:

\begin{itemize}
  \item \textbf{Triangle Diagram:}  
  This diagram ensures that the unitors interact correctly with the associator. For any 1-morphism \(f: A \to B\), the left and right unitors \(l_f\) and \(r_f\) and the associator \(a\) satisfy the triangle identity:
  \[
  \begin{tikzcd}[column sep=huge]
  f \circ \mathrm{id}_A \arrow[rr, "a_{f,\mathrm{id}_A,\mathrm{id}_A}"] \arrow[dr, "r_f"'] & & f \arrow[dl, "l_f"] \\
  & f &
  \end{tikzcd}
  \]

  \item \textbf{Square Diagram (Interchange Law):}  
  The square diagram captures the interplay between vertical and horizontal compositions of 2-morphisms. It expresses that the horizontal composite of vertical compositions is equal (up to a natural isomorphism) to the vertical composite of horizontal compositions:
  \[
  \begin{tikzcd}[column sep=huge, row sep=huge]
  g \circ f \arrow[r, Rightarrow, "\alpha \ast \beta"] \arrow[d, Rightarrow, "\alpha' \circ \beta"'] & g' \circ f \arrow[d, Rightarrow, "\alpha' \ast \beta"] \\
  g \circ f' \arrow[r, Rightarrow, "\alpha \ast \beta'"'] & g' \circ f'
  \end{tikzcd}
  \]
  (Here, \(\alpha, \alpha'\) and \(\beta, \beta'\) denote appropriate 2-morphisms.)

  \item \textbf{Pentagon Diagram:}  
  For any four composable 1-morphisms
  \[
  f: A \to B,\quad g: B \to C,\quad h: C \to D,\quad k: D \to E,
  \]
  the associators must satisfy the pentagon coherence condition, ensuring that the following diagram commutes up to a unique natural isomorphism:
  \[
  \begin{tikzcd}[column sep=huge, row sep=huge]
  ((k \circ h) \circ g) \circ f \arrow[r, "a_{k,h,g}\circ \mathrm{id}_f"] \arrow[d, "a_{k\circ h,g,f}"'] & (k \circ (h \circ g)) \circ f \arrow[r, "a_{k,h\circ g,f}"] & k \circ ((h \circ g) \circ f) \arrow[d, "\mathrm{id}_k \circ a_{h,g,f}"] \\
  (k \circ h) \circ (g \circ f) \arrow[rr, "a_{k,h,g\circ f}"'] & & k \circ (h \circ (g \circ f))
  \end{tikzcd}
  \]
\end{itemize}

\noindent
These coherence diagrams are fundamental to the integrated structure, as they ensure that all alternative ways of composing morphisms and 2-morphisms are equivalent up to a coherent natural isomorphism. This guarantees that the integrated 2-category is consistent and that its universal properties are preserved in a flexible, yet controlled, manner.

\newpage
\section{Construction of Coherence Diagrams}
\subsection{Diagrammatic Representation of Universal Properties}
\label{subsec:diagrammatic_representation}

In order to verify that universal properties are preserved in the integrated 2-categorical setting, we construct coherence diagrams that express these properties. Below, we outline the general method for building such diagrams for the product, coproduct, and exponential constructions.

\paragraph{1. Product Universal Property:}
For any objects \(A\) and \(B\) in a local category, the product \(A \times B\) is defined by the existence of projection morphisms:
\[
\pi_A: A \times B \to A,\quad \pi_B: A \times B \to B.
\]
The universal property states that for any object \(X\) with morphisms \(f: X \to A\) and \(g: X \to B\), there exists a unique mediating morphism \(\langle f, g \rangle: X \to A \times B\) such that:
\[
\pi_A \circ \langle f, g \rangle = f,\quad \pi_B \circ \langle f, g \rangle = g.
\]
This is represented diagrammatically as follows:
\[
\xymatrix@R=5em@C=5em{
X 
  \ar@/^1.5pc/[drr]^-{g} 
  \ar[dr]|-{\langle f, g \rangle} 
  \ar@/_1.5pc/[ddr]_-{f} & & \\
& A \times B \ar[r]^-{\pi_B} \ar[d]_-{\pi_A} & B \\
& A &
}
\]

\paragraph{2. Coproduct Universal Property:}
Dually, the coproduct \(A+B\) comes with injection morphisms:
\[
\iota_A: A \to A+B,\quad \iota_B: B \to A+B.
\]
For any object \(X\) with morphisms \(f: A \to X\) and \(g: B \to X\), there is a unique mediating morphism \([f, g]: A+B \to X\) satisfying:
\[
[f, g] \circ \iota_A = f,\quad [f, g] \circ \iota_B = g.
\]
Its diagrammatic representation is:
\[
\begin{tikzcd}[column sep=large, row sep=large]
A \arrow[r, "\iota_A"] \arrow[dr, "f"'] & A+B \arrow[d, dashed, "{[f, g]}"] & B \arrow[l, "\iota_B"'] \arrow[dl, "g"] \\
& X &
\end{tikzcd}
\]

\paragraph{3. Exponential Universal Property (Currying):}
In a cartesian closed category, the exponential object \(B^A\) is defined together with an evaluation morphism:
\[
\mathrm{ev}: B^A \times A \to B.
\]
For any object \(X\) and any morphism \(f: X \times A \to B\), there exists a unique (up to natural isomorphism) morphism \(\tilde{f}: X \to B^A\) (the curried form of \(f\)) such that:
\[
f = \mathrm{ev} \circ (\tilde{f} \times \mathrm{id}_A).
\]
The corresponding diagram is:
\[
\begin{tikzcd}[column sep=large, row sep=large]
X \times A \arrow[r, "f"] \arrow[d, "\tilde{f}\times\mathrm{id}_A"'] & B \\
B^A \times A \arrow[ur, "\mathrm{ev}"'] &
\end{tikzcd}
\]

\paragraph{Methodology for Constructing Coherence Diagrams:}
\begin{enumerate}
  \item \textbf{Identify the Universal Property:}  
  Determine the required projections/injections and the unique factorization morphism that characterizes the construction (product, coproduct, or exponential).

  \item \textbf{Construct the Basic Diagram:}  
  Draw the standard commutative diagram representing the universal property, ensuring that all arrows (morphisms) and nodes (objects) are clearly labeled.

  \item \textbf{Incorporate 2-Morphisms:}  
  In the 2-categorical setting, modify the diagram to indicate that the required equalities hold only up to natural isomorphism. Replace strict equalities with commutative diagrams that include 2-morphisms (often depicted as double arrows or labeled isomorphisms).

  \item \textbf{Verify Coherence:}  
  Ensure that additional coherence diagrams (such as triangle, square, or pentagon diagrams) are constructed to show that all different composition paths are coherently isomorphic. These diagrams verify that the universal properties extend to the integrated 2-category.
\end{enumerate}

\noindent
This approach provides a systematic way to represent and verify the universal properties of local categories within the integrated framework, ensuring that all structures—though weakened to hold up to isomorphism—remain consistent and robust.

\subsection{Typical Coherence Diagrams in the Integrated Category}
\label{subsec:typical_coherence_diagrams}

A central aspect of the integrated 2-categorical framework is ensuring that different composition paths yield results that are equivalent up to a coherent natural isomorphism. The following diagrams are typical examples of the coherence conditions that must be verified.

\paragraph{Pentagon Diagram (Associativity Coherence):}  
For any four composable 1-morphisms
\[
f: A \to B,\quad g: B \to C,\quad h: C \to D,\quad k: D \to E,
\]
the associators provide the following coherence, which is captured by the pentagon diagram:
\[
\begin{tikzcd}[column sep=huge, row sep=huge]
((k \circ h) \circ g) \circ f 
  \arrow[r, "{a_{k,h,g}\circ \mathrm{id}_f}"]
  \arrow[d, "a_{k\circ h,g,f}"'] 
  & (k \circ (h \circ g)) \circ f 
    \arrow[r, "a_{k,h\circ g,f}"]
  & k \circ ((h \circ g) \circ f) 
    \arrow[d, "{\mathrm{id}_k\circ a_{h,g,f}}"] \\
(k \circ h) \circ (g \circ f) 
  \arrow[rr, "a_{k,h,g\circ f}"']
  & 
  & k \circ (h \circ (g \circ f))
\end{tikzcd}
\]
This diagram ensures that all ways of associatively composing the four 1-morphisms are coherently isomorphic.

\paragraph{Square Diagram (Interchange Law):}  
Another fundamental coherence condition is the interchange law, which governs the interaction between vertical and horizontal compositions of 2-morphisms. For 2-morphisms \(\alpha: f \Rightarrow f'\) and \(\beta: g \Rightarrow g'\), the following square must commute (up to a specified natural isomorphism):
\[
\begin{tikzcd}[column sep=huge, row sep=huge]
g \circ f \arrow[r, Rightarrow, "\alpha \ast \beta"] \arrow[d, Rightarrow, "\beta' \circ \alpha"'] 
  & g' \circ f \arrow[d, Rightarrow, "\beta' \ast \alpha"] \\
g \circ f' \arrow[r, Rightarrow, "\alpha \ast \beta'"'] 
  & g' \circ f'
\end{tikzcd}
\]
Here, the double arrows represent 2-morphisms, and the diagram expresses that horizontal composition distributes over vertical composition in a coherent manner.

\paragraph{Discussion:}  
These diagrams are instrumental in verifying that the integrated 2-category is well-behaved. The pentagon diagram confirms that different ways of associatively composing 1-morphisms lead to naturally isomorphic results, while the square diagram ensures that the interplay between vertical and horizontal compositions of 2-morphisms is consistent. Together, they form the backbone of the coherence verification in the integrated framework, guaranteeing that all composite morphisms behave predictably up to coherent natural isomorphism.

\subsection{Role of 2-Morphisms in Diagram Commutation}
\label{subsec:role_2morphisms}

In an integrated 2-category, 2-morphisms (or natural isomorphisms) are essential for ensuring that diagrams commute, not strictly, but up to coherent isomorphism. This flexible notion of commutativity allows for the reconciliation of different composition routes that would otherwise yield different outcomes in a strictly 1-categorical setting.

For example, consider two distinct ways of composing a series of 1-morphisms \(f: A \to B\), \(g: B \to C\), and \(h: C \to D\):
\[
u = (h \circ g) \circ f \quad \text{and} \quad v = h \circ (g \circ f).
\]
In a 2-category, these two composites are not necessarily equal, but there exists an associator 2-morphism
\[
a_{f,g,h}: (h \circ g) \circ f \Rightarrow h \circ (g \circ f),
\]
which serves as the bridge between the two. This natural isomorphism guarantees that the diagram relating \(u\) and \(v\) commutes in the weak sense:

\[
\begin{tikzcd}[column sep=huge, row sep=huge]
((h \circ g) \circ f) \arrow[r, "a_{f,g,h}"] \arrow[dr, dashed, "\alpha"'] & h \circ (g \circ f) \\
& h \circ (g \circ f) \arrow[u, equal]
\end{tikzcd}
\]
In the above diagram, the dashed arrow represents any alternative 2-morphism that might arise from different choices in the composition process, and the associator \(a_{f,g,h}\) confirms that all such paths are coherently equivalent.

Similarly, for other constructions—such as those defining products, coproducts, and exponentials—the corresponding universal diagrams commute up to specified natural isomorphisms. These 2-morphisms ensure that the integrated structure respects the intended universal properties despite the inherent flexibility of weak compositions.

In summary, natural isomorphisms (2-morphisms) play a critical role in the integrated 2-categorical framework by:
\begin{itemize}
  \item Ensuring that all different composition paths yield equivalent results, thereby maintaining overall consistency.
  \item Providing the necessary coherence data that permits the weakening of strict commutativity without loss of essential structural properties.
  \item Enabling the integration of local categories into a global framework where universal properties hold up to isomorphism, thereby faithfully representing logical constructs.
\end{itemize}

\newpage
\section{Verification of Coherence Conditions}
\subsection{Analysis of Multiple Composition Paths}
\label{subsec:analysis_multiple_paths}

A key aspect of coherence verification in an integrated 2-category is to show that different ways of composing a sequence of 1-morphisms yield results that are equivalent up to a unique 2-isomorphism. To illustrate this, consider a situation with three composable 1-morphisms:
\[
f: A \to B,\quad g: B \to C,\quad h: C \to D.
\]
There are two natural composite 1-morphisms:
\[
u = (h \circ g) \circ f \quad \text{and} \quad v = h \circ (g \circ f).
\]
In a weak 2-category, these two composites are not necessarily equal, but they are related by the associator 2-morphism:
\[
a_{f,g,h}: (h \circ g) \circ f \xRightarrow{\sim} h \circ (g \circ f).
\]

The verification of coherence requires demonstrating that for any additional ways to decompose the composition (for instance, when further compositions are involved), all such paths are connected by canonical 2-isomorphisms. A typical method involves constructing a commutative diagram, such as the well-known pentagon diagram, which ensures that the various associators satisfy the pentagon identity.

For the simpler case of three composable morphisms, the coherence can be visualized by the following diagram:

\[
\begin{tikzcd}[column sep=huge, row sep=huge]
((h \circ g) \circ f) \arrow[rr, "a_{f,g,h}"] \arrow[dr, dashed, "\alpha"'] & & h \circ (g \circ f) \\
& h \circ (g \circ f) \arrow[ur, equal] &
\end{tikzcd}
\]

In this diagram:
\begin{itemize}
  \item The top arrow \(a_{f,g,h}\) is the associator, a canonical 2-morphism that identifies the two composite ways of combining \(f\), \(g\), and \(h\).
  \item The dashed arrow \(\alpha\) represents any alternative 2-morphism obtained from a different sequence of compositions or coherence adjustments.
  \item The equality at the right indicates that, once the coherence data are fully accounted for, both composition paths yield the same effective result.
\end{itemize}

\paragraph{Verification Method:}
To verify that multiple composition paths agree up to a 2-isomorphism, one typically follows these steps:
\begin{enumerate}
  \item \textbf{Identify all possible composition paths:} List out the distinct ways to compose a given sequence of 1-morphisms.
  \item \textbf{Construct coherence diagrams:} Build diagrams (such as pentagon, triangle, or square diagrams) that relate these paths via the available 2-morphisms (associators, unitors, etc.).
  \item \textbf{Check the commutativity up to isomorphism:} Verify that the composite 2-morphisms obtained by following different paths in the diagram are themselves naturally isomorphic. This is usually done by applying known coherence theorems (e.g., Mac Lane's Coherence Theorem) which guarantee that, under the prescribed conditions, all such diagrams commute.
\end{enumerate}

Through this systematic approach, one can demonstrate that even though the compositions in a weak 2-category do not strictly equal one another, they are equivalent in a coherent and consistent manner, thereby ensuring the overall structural integrity of the integrated 2-categorical framework.

\subsection{Formal Proof Strategies}
\label{subsec:formal_proof_strategies}

To rigorously verify the commutativity of coherence diagrams in an integrated 2-categorical framework, several formal proof strategies are employed. These techniques ensure that all alternative composition paths yield results that are equivalent up to a unique natural isomorphism. Key strategies include:

\begin{enumerate}
  \item \textbf{Diagram Chasing:}  
  This classical method involves "chasing" elements or morphisms through the diagram to verify that all composite 2-morphisms yield the same outcome. For instance, one may explicitly compare the composite 2-morphisms in a pentagon diagram to confirm that they are coherently isomorphic.

  \item \textbf{String Diagram Calculus:}  
  String diagrams provide a visual and intuitive way to represent 2-morphisms and their compositions. By translating algebraic expressions into graphical forms, one can often observe the equivalence of different composition paths directly. This approach is particularly useful for complex diagrams, as it simplifies the verification of coherence conditions.

  \item \textbf{Application of Coherence Theorems:}  
  Coherence theorems (such as Mac Lane's Coherence Theorem for monoidal categories and its extensions to bicategories) guarantee that all diagrams constructed from the basic coherence isomorphisms (associators, unitors, etc.) commute. By appealing to these theorems, one can reduce the verification of an entire family of coherence conditions to checking a finite set of key diagrams.

  \item \textbf{Formal Rewriting Systems:}  
  Another rigorous approach is to employ formal rewriting systems, where equations corresponding to the coherence conditions are manipulated according to well-defined rewriting rules. This method provides an algorithmic way to verify that any two parallel 2-morphisms are isomorphic, ensuring overall consistency of the structure.
\end{enumerate}

\noindent
By combining these techniques, one obtains a robust framework for verifying the commutativity of coherence diagrams in the integrated 2-category. This rigorous verification is essential to ensure that the universal properties and structural relationships are preserved across the entire categorical system.

\subsection{Use of Natural Isomorphisms}
\label{subsec:use_nat_iso}

Natural isomorphisms (2-morphisms) play a central role in ensuring the uniqueness and overall consistency of constructions in an integrated 2-category. Instead of requiring strict equality between composite morphisms, we allow them to be isomorphic via natural isomorphisms. This “up-to-isomorphism” approach guarantees that different composition paths lead to essentially the same result.

\paragraph{Guaranteeing Uniqueness:}  
Consider the universal property of the product in a 2-category. For objects \(A\) and \(B\), the product \(A \times B\) is defined by projection 1-morphisms \(\pi_A\) and \(\pi_B\). For any object \(X\) and any pair of 1-morphisms \(f: X \to A\) and \(g: X \to B\), there exists a unique mediating 1-morphism \(\langle f, g \rangle: X \to A \times B\) \emph{up to a natural isomorphism} \(\theta\) satisfying:
\[
\pi_A \circ \langle f, g \rangle \cong f \quad \text{and} \quad \pi_B \circ \langle f, g \rangle \cong g.
\]
The natural isomorphism \(\theta\) ensures that even if there are different choices for the mediating morphism, they are uniquely related by \(\theta\), thereby preserving uniqueness in the 2-categorical sense.

\paragraph{Ensuring Coherence:}  
In a weak 2-category, various composition paths for a sequence of 1-morphisms do not strictly equal each other. Instead, they are connected by associator and unitor isomorphisms. For example, given three composable 1-morphisms
\[
f: A \to B,\quad g: B \to C,\quad h: C \to D,
\]
there are two natural composites:
\[
(h \circ g) \circ f \quad \text{and} \quad h \circ (g \circ f).
\]
The associator 2-morphism
\[
a_{f,g,h}: (h \circ g) \circ f \xRightarrow{\sim} h \circ (g \circ f)
\]
guarantees that both composites are equivalent. This coherence is crucial, as it ensures that any diagram involving multiple compositions commutes up to a canonical natural isomorphism.

\paragraph{Concrete Example in \(\mathbf{Cat}\):}  
Consider the 2-category \(\mathbf{Cat}\), where:
\begin{itemize}
  \item Objects are small categories.
  \item 1-morphisms are functors.
  \item 2-morphisms are natural transformations.
\end{itemize}
For two functors \(F, G: \mathcal{A} \to \mathcal{B}\), a natural transformation \(\eta: F \Rightarrow G\) provides a family of isomorphisms (if \(\eta\) is a natural isomorphism) that adjust the difference between \(F\) and \(G\). If one constructs the product of two categories (with the usual projection functors) and then considers different mediating functors arising from various choices of component functions, any two such mediating functors will be naturally isomorphic. This natural isomorphism guarantees that the product is unique up to isomorphism, and all coherence diagrams involving these functors commute up to a natural transformation.

\paragraph{Summary:}  
Natural isomorphisms ensure that even though compositions in a 2-category might not be strictly equal, they are equivalent in a well-controlled and coherent way. By providing the necessary corrections, these 2-morphisms maintain the uniqueness of factorization in universal properties and ensure that all possible composition paths yield consistent, commutative diagrams. This mechanism is key to the flexibility and robustness of the integrated 2-categorical framework.

\newpage
\section{Case Studies and Examples}
\subsection{Coherence in Product and Coproduct Structures}
\label{subsec:coherence_prod_coproduct}

In this subsection, we present case studies and examples that verify the coherence of product and coproduct structures in the integrated 2-category. Our goal is to demonstrate that the various composition paths, arising from the universal properties of products and coproducts, commute up to natural isomorphism.

\paragraph{Coherence in Product Structures:}
Consider the product \(A \times B\) in a local category. By definition, for any object \(X\) and morphisms 
\[
f: X \to A \quad \text{and} \quad g: X \to B,
\]
there exists a unique mediating morphism \(\langle f, g \rangle: X \to A \times B\) such that the following diagram commutes (up to a natural isomorphism in the 2-categorical setting):
\[
\xymatrix@R=5em@C=5em{
X 
  \ar@/^1.5pc/[drr]^-{g} 
  \ar[dr]|-{\langle f, g \rangle} 
  \ar@/_1.5pc/[ddr]_-{f} & & \\
& A \times B \ar[r]^-{\pi_B} \ar[d]_-{\pi_A} & B \\
& A &
}
\]
In an integrated 2-category, the commutativity is not strict; instead, there exists a natural isomorphism \(\theta\) such that:
\[
\pi_A \circ \langle f, g \rangle \cong f \quad \text{and} \quad \pi_B \circ \langle f, g \rangle \cong g.
\]
This coherence data guarantees that even if different mediating morphisms are obtained (e.g., through different decomposition paths), they are uniquely isomorphic.

\paragraph{Coherence in Coproduct Structures:}
Dually, consider the coproduct \(A+B\) with injection morphisms \(\iota_A: A \to A+B\) and \(\iota_B: B \to A+B\). For any object \(X\) and any pair of morphisms
\[
f: A \to X \quad \text{and} \quad g: B \to X,
\]
the universal property guarantees the existence of a unique mediating morphism \([f, g]: A+B \to X\) (up to natural isomorphism) making the diagram
\[
\begin{tikzcd}[column sep=large, row sep=large]
A \arrow[r, "\iota_A"] \arrow[dr, "f"'] & A+B \arrow[d, dashed, "{[f, g]}"] & B \arrow[l, "\iota_B"'] \arrow[dl, "g"] \\
& X &
\end{tikzcd}
\]
commute in the weak sense:
\[
[f, g] \circ \iota_A \cong f \quad \text{and} \quad [f, g] \circ \iota_B \cong g.
\]
Again, the natural isomorphisms ensure that any two ways of obtaining the mediating morphism are coherently equivalent.

\paragraph{Verification Strategy:}
In both cases, the verification of coherence involves:
\begin{itemize}
  \item Identifying all potential composition routes (for example, when further composing with additional morphisms).
  \item Constructing commutative diagrams (triangles, squares, etc.) where the difference between routes is mediated by natural isomorphisms.
  \item Applying coherence theorems, which guarantee that all such diagrams commute up to a unique isomorphism.
\end{itemize}

These examples illustrate that, in an integrated 2-category, the universal properties of product and coproduct constructions are preserved in a flexible manner, with natural isomorphisms ensuring the overall consistency and coherence of the structure.

\subsection{Coherence in Exponential Structures}
\label{subsec:coherence_exponential_structures}

In a cartesian closed 2-category, the exponential object \(B^A\) and the evaluation morphism
\[
\mathrm{ev}: B^A \times A \to B
\]
capture the notion of function spaces and logical implication via currying. However, in this enriched setting, the universal property of exponentials holds only up to a coherent natural isomorphism. This section details how the coherence of the exponential structure is verified through diagrammatic representations and concrete examples.

\paragraph{Currying and Evaluation:}
Given any object \(X\) and a 1-morphism
\[
f: X \times A \to B,
\]
the universal property of the exponential object asserts that there exists a unique (up to natural isomorphism) 1-morphism \(\tilde{f}: X \to B^A\) (the curried form of \(f\)) such that the following diagram commutes up to a specified 2-morphism:
\[
\begin{tikzcd}[column sep=large, row sep=large]
X \times A \arrow[r, "f"] \arrow[d, "\tilde{f}\times\mathrm{id}_A"'] & B \\
B^A \times A \arrow[ur, "\mathrm{ev}"'] &
\end{tikzcd}
\]
The natural isomorphism ensures that any two choices of \(\tilde{f}\) are uniquely isomorphic, thus preserving the universal property in a flexible, yet coherent, manner.

\paragraph{Example in \(\mathbf{Set}\):}
In the category \(\mathbf{Set}\) (a cartesian closed category), the exponential object \(B^A\) is the set of all functions from \(A\) to \(B\), and the evaluation map is defined by
\[
\mathrm{ev}(h, a) = h(a) \quad \text{for } h \in B^A \text{ and } a \in A.
\]
Given a function \(f: X \times A \to B\), currying produces a function \(\tilde{f}: X \to B^A\) defined by:
\[
\tilde{f}(x)(a) = f(x, a).
\]
Although in \(\mathbf{Set}\) this correspondence is strictly unique, in a general 2-categorical setting, the uniqueness is only up to a natural isomorphism. This natural isomorphism (a 2-morphism) plays a crucial role in ensuring that different ways of factoring \(f\) through \(B^A\) are coherently equivalent.

\paragraph{Verifying Coherence:}
To verify coherence in exponential structures, the following steps are taken:
\begin{enumerate}
  \item \textbf{Diagram Construction:}  
  Construct the diagram representing the currying process, as shown above. This diagram must commute up to a natural isomorphism.
  
  \item \textbf{Identification of Natural Isomorphisms:}  
  Identify the 2-morphisms that relate any alternative choices for the mediating morphism \(\tilde{f}\). These isomorphisms provide the necessary corrections ensuring that the evaluation map behaves uniformly.
  
  \item \textbf{Coherence Checks:}  
  Verify that the natural isomorphisms satisfy the expected coherence conditions, such as compatibility with associators and unitors in the 2-category. In practice, this involves checking that composite diagrams (which may involve further compositions with other morphisms) commute up to a unique natural isomorphism.
\end{enumerate}

\paragraph{Conclusion:}
The use of natural isomorphisms in the context of exponential structures ensures that the currying process and evaluation map are coherent, even when the underlying diagrams do not strictly commute. This flexibility is essential for the integrated 2-categorical framework, as it allows the universal properties of exponentials—and, by extension, logical implication—to be preserved in a robust and consistent manner.

\subsection{Comparative Analysis with Traditional Approaches}
\label{subsec:comparative_analysis_traditional}

The integrated 2-categorical framework provides several advantages over traditional 1-categorical methods. Below is a comparative analysis highlighting the key differences and benefits:

\begin{itemize}
  \item \textbf{Flexibility in Universal Properties:}  
  In 1-categories, universal properties are defined by strict commutativity of diagrams. This rigidity can be problematic when natural constructions only satisfy their universal conditions up to isomorphism. In contrast, the 2-categorical approach allows universal properties to hold \emph{up to natural isomorphism}, thereby accommodating a wider range of mathematical phenomena.

  \item \textbf{Handling of Coherence:}  
  Traditional 1-categorical frameworks lack the means to manage coherence when different composition paths yield non-identical outcomes. The 2-categorical setting introduces 2-morphisms (natural isomorphisms) that systematically relate different composition routes, ensuring that all diagrams commute up to a coherent isomorphism. This leads to a more robust and reliable integrated structure.

  \item \textbf{Modularity in Integration:}  
  The use of pseudo-limits and pseudo-colimits in 2-categories enables a modular integration of local categories. Each local category, whether modeling products, coproducts, or exponentials, contributes its universal properties in a way that is preserved in the global structure. Traditional approaches often struggle to integrate such structures without imposing unnatural restrictions.

  \item \textbf{Simplification via Strictification:}  
  Once integrated, the weak 2-categorical structure can be further simplified by strictification—converting the structure into a strict 2-category without losing the essential universal properties. This step, which is generally not available in 1-categorical methods, facilitates easier reasoning and proof construction.
\end{itemize}

\noindent
In summary, the 2-categorical framework not only overcomes the limitations of strict equality in 1-categories but also provides a natural setting for the flexible and coherent treatment of universal properties. This approach yields a more faithful representation of logical semantics and a robust structure for integrating diverse local categories.

\newpage
\subsection{Challenges Encountered and Resolutions}
\label{subsec:challenges_resolutions}

In the process of verifying and developing the integrated 2-categorical framework, we encountered several technical and conceptual challenges. Below is a summary of the main issues along with the strategies we adopted to resolve them:

\begin{enumerate}
  \item \textbf{Fragile Commands in Moving Arguments:}  
  When using fragile commands—such as \verb|\langle| and \verb|\rangle|—within \texttt{tikz-cd} labels, we experienced errors \\(e.g., “\verb|! Missing \endcsname inserted.|”).  \\
  \textbf{Resolution:}  
  We addressed this issue by enclosing such commands in braces and using protective macros like \verb|\ensuremath| or \verb|\protect|. For instance, replacing \\ \verb|"$\langle f, g \rangle$"| with \verb|"{\ensuremath{\langle f, g \rangle}}"|\\ ensured correct expansion in moving arguments.
  
  \item \textbf{Empty Cells Leading to Missing Nodes:}  
  In the \texttt{tikz-cd} environment, leaving cells empty sometimes caused errors due to the absence of a node, which interfered with arrow placement.  
  \textbf{Resolution:}  
  We mitigated this problem by inserting empty groups (\verb|{}|) in cells that would otherwise be empty, ensuring that every cell contains a node and that all arrows have properly defined sources and targets.
  
  \item \textbf{Verifying Coherence in Multiple Composition Paths:}  
  Since different sequences of composing 1-morphisms can lead to results that are only isomorphic rather than strictly equal, it was challenging to verify that all these alternative paths are coherently related.  
  \textbf{Resolution:}  
  We combined techniques such as diagram chasing and string diagram calculus, and applied established coherence theorems (for example, Mac Lane's Coherence Theorem) to demonstrate that all such diagrams commute up to a unique natural isomorphism.
  
  \item \textbf{Integrating Diverse Local Structures:}  
  Local categories modeling various logical connectives (e.g., product, coproduct, and exponential structures) each possess distinct universal properties. Integrating these into one unified 2-category without losing the inherent structure was nontrivial.  
  \textbf{Resolution:}  
  The introduction of bifunctors along with the use of pseudo-limits and pseudo-colimits provided a modular approach to integration. This framework preserved the universal properties of each local category, while natural isomorphisms ensured the overall coherence of the integrated structure.
\end{enumerate}

Overall, these resolutions not only overcame specific technical obstacles but also deepened our understanding of the flexible and coherent nature of higher categorical structures.

\subsection{Implications for the Overall Integrated Category}
\label{subsec:implications_overall_integrated}

The establishment of an integrated 2-category, in which all local categories (modeling negation, products, coproducts, and exponentials) are coherently combined, has profound implications for both the theoretical and practical aspects of categorical logic.

\paragraph{Significance of Coherence:}  
By ensuring that all coherence diagrams commute up to natural isomorphism, the integrated category inherits the universal properties of the individual local categories while accommodating the flexibility inherent in higher categorical structures. This means that:
\begin{itemize}
  \item The logical operations are represented in a manner that accurately reflects their "up-to-isomorphism" nature, leading to a more robust semantic interpretation.
  \item Multiple composition paths, which are common in complex constructions, are guaranteed to yield consistent results, thereby enhancing the reliability of the integrated framework.
\end{itemize}

\paragraph{Advantages for Logical Semantics:}  
The integrated 2-category provides a unified semantic foundation for various logical connectives. In such a setting:
\begin{itemize}
  \item Conjunction, disjunction, and implication can be modeled via products, coproducts, and exponentials, respectively, with their universal properties preserved even when strict commutativity is relaxed.
  \item The use of natural isomorphisms (2-morphisms) facilitates the manipulation of these logical constructs in a way that is both flexible and coherent.
\end{itemize}

\paragraph{Future Directions:}  
With the integrated category as a solid foundation, future work can focus on:
\begin{itemize}
  \item \textbf{Strictification and Optimization:} Further refining the integrated structure by applying strictification techniques to obtain a strict 2-category that retains the original semantics.
  \item \textbf{Applications to Computer Science and Logic:} Exploring practical applications, such as type theory, programming language semantics, and formal verification, where the flexible handling of logical connectives can lead to more powerful computational models.
  \item \textbf{Extension to Higher Dimensions:} Investigating the possibility of extending these integration techniques to \(n\)-categories, thereby enriching the framework for even more complex logical systems.
\end{itemize}

\noindent
In summary, the coherent integration of local categories into a unified 2-category not only enhances our theoretical understanding of logical semantics but also lays the groundwork for numerous practical applications and further theoretical advancements.

\subsection{Future Directions in Coherence Verification}
\label{subsec:future_coherence_verification}

Looking forward, several research avenues remain open to further enhance and refine coherence verification in integrated 2-categorical frameworks:

\begin{itemize}
  \item \textbf{Advanced Strictification Techniques:}  
  Investigate new methods for strictifying weak 2-categorical structures while preserving their essential universal properties. This includes developing algorithms that can systematically reduce the complexity of coherence data without sacrificing flexibility.

  \item \textbf{Refinement of Coherence Conditions:}  
  Explore additional coherence conditions beyond the classical pentagon and triangle identities. This may involve the study of higher coherence diagrams that naturally arise in more complex integrations or in \(n\)-categorical settings, and formulating general coherence theorems that can be applied to a wider class of structures.

  \item \textbf{Algorithmic Verification:}  
  Develop automated tools and formal proof assistants that can handle the verification of complex coherence diagrams. Such tools would be invaluable for managing the intricate web of natural isomorphisms present in integrated 2-categories.

  \item \textbf{Extension to Higher Categories:}  
  Generalize the current framework to \(n\)-categories, where similar issues of coherence and strictification become even more pronounced. Investigating the patterns and structures in higher-dimensional coherence could lead to new theoretical insights and practical techniques.

  \item \textbf{Applications to Logical and Computational Systems:}  
  Apply the refined coherence verification methods to concrete areas such as type theory, programming language semantics, and formal verification. These applications could drive the development of more robust logical frameworks and computational models that inherently manage coherence in a flexible manner.
\end{itemize}

\noindent
These future directions promise to deepen our understanding of coherence in higher categories and to extend the applicability of these techniques to a broader range of mathematical and computational problems.

\chapter{Strictification and Reconstitution of the Integrated Category}
\section{Introduction to Strictification}
\subsection{Motivation for Strictification}
\label{subsec:motivation_strictification}

In many naturally occurring mathematical and logical settings, structures such as bicategories or weak 2-categories come equipped with associativity and unit laws that hold only up to a natural isomorphism rather than strictly. While this flexibility reflects the true nature of these structures, it also introduces a level of complexity that can complicate theoretical analysis and practical applications.

\paragraph{Reasons for Strictification:}
\begin{itemize}
  \item \textbf{Simplification of Composition:}  
  In weak structures, multiple composition paths may exist, all of which are only equivalent up to coherent 2-morphisms. By converting these structures into strict ones, where composition is strictly associative and unital, one obtains a simpler framework that is easier to work with both conceptually and computationally.
  
  \item \textbf{Streamlined Coherence Verification:}  
  When coherence is handled strictly, verifying the commutativity of diagrams becomes considerably more straightforward. Strictification eliminates the need to constantly manage and track complex natural isomorphisms, thereby reducing potential sources of error in proofs and constructions.
  
  \item \textbf{Facilitation of Applications:}  
  Many applications in logic, computer science, and category theory—such as formal verification, type theory, and programming language semantics—benefit from strict structures. They provide a more stable foundation for constructing models and performing computations, where the burden of managing weak equivalences is minimized.
  
  \item \textbf{Equivalence Preservation:}  
  The Strictification Theorem assures us that every weak 2-category (or bicategory) is biequivalent to a strict 2-category. This means that, despite converting to a stricter setting, no essential information or structure is lost, ensuring that the semantic content is preserved.
\end{itemize}

\paragraph{Conclusion:}  
Overall, strictification serves as a powerful tool to reconcile the inherent flexibility of weak structures with the practical need for simplicity and ease of analysis. By converting weak associativity and unit laws into strict ones (while retaining the original categorical semantics up to equivalence), strictification paves the way for more effective theoretical and computational applications.

\subsection{Overview of the Strictification Process}
\label{subsec:strictification_overview}

The strictification process transforms a weak 2-category—where associativity and unit laws hold only up to coherent natural isomorphism—into an equivalent strict 2-category where these laws hold on the nose. The process can be outlined as follows:

\begin{enumerate}
  \item \textbf{Identification of Weak Coherence Conditions:}  
  Analyze the given weak 2-category to identify instances where associativity, unit laws, and other coherence conditions hold only up to specified natural isomorphisms (e.g., associators and unitors).

  \item \textbf{Free Construction:}  
  Construct a free strict 2-category on the underlying data (objects, 1-morphisms, and 2-morphisms) of the weak structure, without imposing any coherence conditions.

  \item \textbf{Imposition of Coherence Relations:}  
  Incorporate the coherence isomorphisms (such as the pentagon and triangle identities) by imposing relations on the free strict 2-category. This step involves formalizing the desired equalities among different composite 2-morphisms.

  \item \textbf{Quotienting by Coherence Data:}  
  Take a quotient of the free strict 2-category by the equivalence relation generated by the coherence isomorphisms. This yields a strict 2-category in which the universal properties of the original weak structure are preserved up to a coherent isomorphism.

  \item \textbf{Establishing Biequivalence:}  
  Finally, construct pseudofunctors between the original weak 2-category and the strictified 2-category, along with pseudonatural transformations that demonstrate a biequivalence. This confirms that no essential structure is lost during strictification.
\end{enumerate}

This systematic approach not only simplifies the theoretical analysis by eliminating the need to continuously manage weak coherence data but also ensures that the integrated logical semantics remain intact.

\newpage
\section{Theoretical Foundations}
\subsection{Recap of Bicategories and Weak 2-Categories}
\label{subsec:recap_bicategories}

A \emph{bicategory} is a generalization of a 2-category in which the composition of 1-morphisms is associative and unital only up to coherent natural isomorphisms. Formally, a bicategory \(\mathcal{B}\) consists of:
\begin{itemize}
  \item \textbf{Objects:} The elements of \(\mathcal{B}\), denoted \(A, B, C, \ldots\).
  \item \textbf{1-Morphisms:} For any two objects \(A\) and \(B\), there is a category \(\mathcal{B}(A,B)\) whose objects are the 1-morphisms \(f: A \to B\).
  \item \textbf{2-Morphisms:} The morphisms in \(\mathcal{B}(A,B)\) are called 2-morphisms. For two 1-morphisms \(f, g: A \to B\), a 2-morphism \(\alpha: f \Rightarrow g\) is a morphism in \(\mathcal{B}(A,B)\).
\end{itemize}

In a bicategory, the composition of 1-morphisms is accompanied by additional structure:
\begin{itemize}
  \item \textbf{Associator:} For any three composable 1-morphisms 
  \[
  f: A \to B,\quad g: B \to C,\quad h: C \to D,
  \]
  there is an invertible 2-morphism 
  \[
  a_{f,g,h}: (h \circ g) \circ f \xRightarrow{\sim} h \circ (g \circ f),
  \]
  which satisfies the \emph{pentagon coherence condition}.
  
  \item \textbf{Unitors:} For each 1-morphism \(f: A \to B\), there exist invertible 2-morphisms (the left and right unitors)
  \[
  l_f: \mathrm{id}_B \circ f \xRightarrow{\sim} f \quad \text{and} \quad r_f: f \circ \mathrm{id}_A \xRightarrow{\sim} f,
  \]
  which satisfy the \emph{triangle identity}.
\end{itemize}

A \emph{weak 2-category} is essentially synonymous with a bicategory, emphasizing that the associativity and unit laws hold only up to these specified natural isomorphisms rather than strictly. This flexibility allows bicategories to model a wide range of structures where strict equality is too rigid.

In summary, bicategories (or weak 2-categories) provide a framework in which the usual categorical constructions are maintained up to coherent isomorphism, enabling a more natural treatment of complex composition and coherence phenomena in higher category theory.

\subsection{Statement of the Strictification Theorem}
\label{subsec:strictification_theorem_statement}

\begin{thm}[Strictification Theorem]
Every bicategory \(\mathcal{B}\) is biequivalent to a strict 2-category \(\mathcal{B}^{\mathrm{str}}\). That is, there exists a strict 2-category \(\mathcal{B}^{\mathrm{str}}\) and a pair of pseudofunctors
\[
F: \mathcal{B} \longrightarrow \mathcal{B}^{\mathrm{str}}, \quad G: \mathcal{B}^{\mathrm{str}} \longrightarrow \mathcal{B},
\]
along with pseudonatural equivalences
\[
G \circ F \simeq \operatorname{Id}_{\mathcal{B}} \quad \text{and} \quad F \circ G \simeq \operatorname{Id}_{\mathcal{B}^{\mathrm{str}}},
\]
which satisfy the requisite coherence conditions.
\end{thm}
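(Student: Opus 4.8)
The plan is to bypass the free-construction-and-quotient route sketched above in favour of the bicategorical Yoneda embedding, which yields the cleanest proof. The central observation is that for any bicategory \(\mathcal{B}\), the bicategory \(\mathrm{Hom}(\mathcal{B}^{\mathrm{op}}, \mathbf{Cat})\) of pseudofunctors, pseudonatural transformations, and modifications is in fact a \emph{strict} 2-category. This is because its hom-categories are assembled from functors and natural transformations valued in \(\mathbf{Cat}\), whose composition is strictly associative and unital on the nose; the weakness of \(\mathcal{B}\) is absorbed into the coherence data carried \emph{inside} each pseudofunctor and pseudonatural transformation rather than appearing in the ambient composition law. Thus \(\mathrm{Hom}(\mathcal{B}^{\mathrm{op}}, \mathbf{Cat})\) supplies a strict target into which \(\mathcal{B}\) can be faithfully embedded.

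First I would define the Yoneda pseudofunctor
\[
y : \mathcal{B} \longrightarrow \mathrm{Hom}(\mathcal{B}^{\mathrm{op}}, \mathbf{Cat}), \qquad A \longmapsto \mathcal{B}(-, A),
\]
together with its structure constraint 2-cells, which repackage the associators and unitors of \(\mathcal{B}\). Second, I would establish the bicategorical Yoneda lemma, producing for each pair of objects \(A, B\) an equivalence of categories
\[
\mathcal{B}(A, B) \;\simeq\; \mathrm{Hom}(\mathcal{B}^{\mathrm{op}}, \mathbf{Cat})\bigl(y A,\, y B\bigr),
\]
which exhibits \(y\) as a \emph{local equivalence} (fully faithful in the bicategorical sense). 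Third, I would let \(\mathcal{B}^{\mathrm{str}}\) be the full sub-2-category of \(\mathrm{Hom}(\mathcal{B}^{\mathrm{op}}, \mathbf{Cat})\) spanned by the representables \(\{\, y A : A \in \operatorname{Ob}(\mathcal{B}) \,\}\). As a full sub-2-category of a strict 2-category, \(\mathcal{B}^{\mathrm{str}}\) is automatically strict, and \(y\) corestricts to a pseudofunctor \(F : \mathcal{B} \to \mathcal{B}^{\mathrm{str}}\).

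It then remains to assemble the biequivalence. By construction \(F\) is essentially surjective on objects — every object of \(\mathcal{B}^{\mathrm{str}}\) is some \(y A\) — and it is a local equivalence by the Yoneda lemma. A standard argument shows that any pseudofunctor that is simultaneously essentially surjective and a local equivalence is a biequivalence: choosing for each object a quasi-inverse at the level of hom-categories, together with a weak preimage for each representable, yields a pseudofunctor \(G : \mathcal{B}^{\mathrm{str}} \to \mathcal{B}\), and the pointwise equivalences can be coherently bundled into pseudonatural equivalences \(G \circ F \simeq \operatorname{Id}_{\mathcal{B}}\) and \(F \circ G \simeq \operatorname{Id}_{\mathcal{B}^{\mathrm{str}}}\) satisfying the required conditions.

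The hard part will be the bicategorical Yoneda lemma and, relatedly, the verification that the structure constraints of \(y\) satisfy the pseudofunctor axioms. Unlike its 1-categorical analogue, one must track invertible 2-cells throughout and confirm that the comparison equivalence is natural up to coherent modification; the bookkeeping of associators and unitors in proving full faithfulness on 2-cells is where essentially all the genuine content resides. By contrast, the strictness of the target and the essential-surjectivity step are comparatively formal once this lemma is in hand. For the detailed coherence computations underlying the Yoneda lemma I would follow the treatment in \cite{GordonPowerStreet1995}.
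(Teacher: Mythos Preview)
Your proposal is correct and follows a genuinely different route from the paper. The paper sketches the free-construction-and-quotient method: one freely generates a strict 2-category on the underlying data of \(\mathcal{B}\), adjoins the associators and unitors as formal 2-cells, and then quotients by the relations forcing them to become identities, before exhibiting pseudofunctors in both directions. Your approach instead embeds \(\mathcal{B}\) into a strict 2-category that already exists, namely \(\mathrm{Hom}(\mathcal{B}^{\mathrm{op}},\mathbf{Cat})\), and extracts \(\mathcal{B}^{\mathrm{str}}\) as the full image of the Yoneda pseudofunctor. The Yoneda route is arguably cleaner: strictness is inherited for free from \(\mathbf{Cat}\), there is no quotient to control, and the real work is concentrated in a single reusable lemma (the bicategorical Yoneda lemma), whereas the free-and-quotient method must separately verify that the quotient is well-defined, that the result is a strict 2-category, and that nothing essential is collapsed. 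On the other hand, the paper's syntactic construction has the virtue of making the strictified composites explicit as equivalence classes of formal strings, which can be advantageous when one wants to compute with \(\mathcal{B}^{\mathrm{str}}\) directly rather than reason about it abstractly. One minor remark: the reference \cite{GordonPowerStreet1995} is primarily about coherence for tricategories; for the bicategorical Yoneda lemma itself the original source is Street's \emph{Fibrations in bicategories}.
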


\paragraph{Hypotheses:}  
\begin{itemize}
  \item The starting point is a bicategory \(\mathcal{B}\) in which the composition of 1-morphisms is associative and unital only up to coherent natural isomorphisms (i.e., associators and unitors that satisfy the pentagon and triangle identities).
  \item The bicategory \(\mathcal{B}\) may have additional structure (such as products, coproducts, or exponentials) that are defined only up to isomorphism.
\end{itemize}

\paragraph{Implications:}  
\begin{itemize}
  \item There exists a strict 2-category \(\mathcal{B}^{\mathrm{str}}\) in which the associativity and identity laws hold strictly.
  \item The pseudofunctors \(F\) and \(G\) provide an equivalence between the weak structure of \(\mathcal{B}\) and the strict structure of \(\mathcal{B}^{\mathrm{str}}\). This ensures that no essential information or logical semantics are lost during the strictification process.
  \item The strictification simplifies subsequent theoretical analysis and practical applications by removing the need to constantly manage coherence isomorphisms in proofs and constructions.
\end{itemize}

\subsection{Related Work and Historical Context}
\label{subsec:related_work_historical}

The challenge of strictification—transforming weak categorical structures into strict ones—has been a central theme in higher category theory for several decades. Early insights can be traced back to Mac Lane's Coherence Theorem for monoidal categories, which demonstrated that all diagrams built using associativity and unit constraints commute. This result provided a foundational understanding that, under suitable conditions, the seemingly “weak” structure of a monoidal category could be treated as if it were strict.

Building on these ideas, researchers extended the concept of strictification to bicategories and weak 2-categories. A seminal contribution in this area is the Strictification Theorem, proven by Gordon, Power, and Street \cite{GordonPowerStreet1995}, which establishes that every bicategory is biequivalent to a strict 2-category. This breakthrough not only clarified the theoretical landscape by showing that weak coherence conditions can be rigidified without loss of essential structure but also provided practical tools for simplifying complex categorical constructions.

Subsequent work by various authors, including Lack and others, has further explored strictification in different contexts and identified additional conditions under which higher categorical structures can be made strict. These advances have significantly influenced applications in algebraic topology, logic, and computer science, where managing coherence efficiently is critical.

In summary, the evolution of strictification results—from Mac Lane’s early work to the comprehensive treatment by Gordon, Power, and Street—has provided both a deep theoretical understanding and practical methods for addressing coherence in higher categories.

\newpage
\section{Methodology for Strictification}
\subsection{Identifying Weak Structures in the Integrated Category}
\label{subsec:identifying_weak_structures}

In the integrated 2-category, many structural properties that are strict in a 1-category are only satisfied up to natural isomorphism. To systematically pinpoint these weak structures, we proceed as follows:

\begin{itemize}
  \item \textbf{Examine Composition Laws:}  
  Analyze the composition of 1-morphisms. In a strict 2-category, the associativity law holds exactly, i.e.,
  \[
  (h \circ g) \circ f = h \circ (g \circ f).
  \]
  However, in our integrated structure, these two compositions are related by a natural isomorphism (the associator) 
  \[
  a_{f,g,h} : (h \circ g) \circ f \xRightarrow{\sim} h \circ (g \circ f).
  \]
  
  \item \textbf{Inspect Unit Laws:}  
  Similarly, the left and right unit laws in the integrated category are satisfied only up to natural isomorphisms (the unitors):
  \[
  l_f: \mathrm{id}_B \circ f \xRightarrow{\sim} f,\quad r_f: f \circ \mathrm{id}_A \xRightarrow{\sim} f.
  \]
  
  \item \textbf{Identify Pseudo-Constructions:}  
  The use of pseudo-limits and pseudo-colimits in the integration process inherently introduces natural isomorphisms in place of strict universal properties. By analyzing the construction diagrams for products, coproducts, and exponentials, we can identify the points where these pseudo-constructions replace strict commutativity with coherence isomorphisms.
  
  \item \textbf{Coherence Diagrams:}  
  Construct and inspect the coherence diagrams (such as pentagon, triangle, and square diagrams) that relate multiple composition paths. These diagrams reveal where different paths yield results that are only isomorphic rather than equal.
\end{itemize}

Through this analysis, we can clearly map the weak structures in the integrated category. This understanding is crucial for applying strictification techniques, which aim to replace these natural isomorphisms with strict equalities, thereby simplifying the overall structure for further theoretical analysis and applications.

\subsection{Techniques for Replacing Weak Equalities with Strict Equalities}
\label{subsec:techniques_strictification}

To simplify reasoning in an integrated 2-category, it is often desirable to convert weak structures—where associativity and unit laws hold only up to natural isomorphism—into strict ones. The following techniques are commonly employed for strictification:

\begin{enumerate}
  \item \textbf{Free Strict 2-Category Construction:}  
  Begin by constructing a free strict 2-category on the underlying data (objects, 1-morphisms, and 2-morphisms) of the weak structure. This free construction does not impose any coherence conditions; it merely “forgets” the weak aspects.

  \item \textbf{Imposing Coherence Relations:}  
  Next, incorporate the coherence isomorphisms (such as associators and unitors) by imposing equivalence relations. Specifically, one identifies those composite 2-morphisms that are related by the natural isomorphisms present in the weak structure. This step enforces that all associativity and unit conditions hold strictly.

  \item \textbf{Quotienting by Coherence Data:}  
  Form the quotient of the free strict 2-category by the relations generated by the coherence isomorphisms. The resulting category, often denoted \(\mathcal{B}^{\mathrm{str}}\), is a strict 2-category that is biequivalent to the original weak 2-category.

  \item \textbf{Establishing Biequivalence:}  
  Finally, construct pseudofunctors \(F: \mathcal{B} \to \mathcal{B}^{\mathrm{str}}\) and \(G: \mathcal{B}^{\mathrm{str}} \to \mathcal{B}\) along with pseudonatural transformations showing that the composites \(G \circ F\) and \(F \circ G\) are equivalent to the identity. This step confirms that no essential structure or universal property is lost in the process.

\end{enumerate}

These techniques, collectively known as \emph{strictification}, ensure that while the original weak structure allows flexibility via natural isomorphisms, one can replace these “weak equalities” with strict ones for easier manipulation and analysis, without losing any of the underlying categorical semantics.

\subsection{Step-by-Step Implementation}
\label{subsec:strictification_step_by_step}

The formal process of strictifying an integrated bicategory \(\mathcal{B}\) into a strict 2-category \(\mathcal{B}^{\mathrm{str}}\) can be outlined in the following steps:

\begin{enumerate}
  \item \textbf{Free Strict 2-Category Construction:}  
  Construct the free strict 2-category \(\mathcal{F}\) on the underlying data of \(\mathcal{B}\). This involves taking the objects, 1-morphisms, and 2-morphisms of \(\mathcal{B}\) and freely generating a strict 2-category without imposing the coherence isomorphisms.
  
  \item \textbf{Introduction of Coherence Relations:}  
  Identify the coherence isomorphisms in \(\mathcal{B}\) (such as associators and unitors) and impose them as equations in \(\mathcal{F}\). Specifically, for any three composable 1-morphisms \(f\), \(g\), and \(h\), include the relation
  \[
  (h \circ g) \circ f \sim h \circ (g \circ f),
  \]
  along with the corresponding relations for the unit laws.
  
  \item \textbf{Quotient by Coherence Data:}  
  Form the quotient of the free strict 2-category \(\mathcal{F}\) by the congruence generated by these coherence relations. Denote the resulting strict 2-category by \(\mathcal{B}^{\mathrm{str}}\). This step ensures that the associativity and unit constraints hold on the nose in \(\mathcal{B}^{\mathrm{str}}\).
  
  \item \textbf{Establishing Biequivalence:}  
  Construct pseudofunctors
  \[
  F: \mathcal{B} \to \mathcal{B}^{\mathrm{str}} \quad \text{and} \quad G: \mathcal{B}^{\mathrm{str}} \to \mathcal{B},
  \]
  along with pseudonatural transformations showing that the composites \(G \circ F\) and \(F \circ G\) are each equivalent to the respective identity 2-functors. This biequivalence confirms that the strictification process preserves all essential properties and universal constructions of \(\mathcal{B}\).
\end{enumerate}

This step-by-step procedure transforms the weak structure of \(\mathcal{B}\) into a strict 2-category, simplifying subsequent analyses and applications while maintaining the original logical semantics.

\newpage
\section{Preservation of Universal Properties}
\subsection{Analysis of Universal Property Preservation}
\label{subsec:universal_property_preservation}

A key feature of the strictification process is that it preserves the universal properties of local constructions—such as products, coproducts, and exponentials—even though the weak equalities (holding up to natural isomorphism) are replaced by strict equalities in the strictified 2-category. This preservation is achieved via the biequivalence between the original weak 2-category and its strictification.

For instance, consider the product \(A \times B\) in a local category. Its universal property states that for any object \(X\) with morphisms 
\[
f: X \to A \quad \text{and} \quad g: X \to B,
\]
there exists a unique mediating morphism \(\langle f, g \rangle: X \to A \times B\) such that:
\[
\pi_A \circ \langle f, g \rangle = f \quad \text{and} \quad \pi_B \circ \langle f, g \rangle = g.
\]
When strictification is applied, the resulting strict 2-category \(\mathcal{B}^{\mathrm{str}}\) is biequivalent to the original bicategory \(\mathcal{B}\). Under this biequivalence, the universal property of the product is preserved in the following sense:
\[
\forall X,\; \operatorname{Hom}_{\mathcal{B}}(X, A \times B) \cong \operatorname{Hom}_{\mathcal{B}^{\mathrm{str}}}(X, A \times B),
\]
with the mediating morphism remaining unique (in the strict sense) in \(\mathcal{B}^{\mathrm{str}}\).

Similarly, for coproducts and exponentials:
\begin{itemize}
  \item \textbf{Coproducts:} The universal property of the coproduct \(A+B\) is maintained. Any pair of morphisms \(f: A \to X\) and \(g: B \to X\) factors uniquely (up to a unique isomorphism) through \(A+B\) in the strictified setting.
  
  \item \textbf{Exponentials:} In a cartesian closed category, the exponential object \(B^A\) along with the evaluation morphism 
  \[
  \mathrm{ev}: B^A \times A \to B,
  \]
  satisfies the currying isomorphism:
  \[
  \operatorname{Hom}(X \times A, B) \cong \operatorname{Hom}(X, B^A).
  \]
  Under strictification, this correspondence holds strictly, ensuring that the universal property of exponentials is preserved.
\end{itemize}

Thus, the strictification process converts the weak, “up-to-isomorphism” universal properties into strict equalities in a way that is fully compatible with the original structure, ensuring that all essential logical and categorical semantics remain intact.

\subsection{Role of 2-Equivalences}
\label{subsec:role_2equivalences}

A fundamental aspect of the strictification process is that the strictified 2-category \(\mathcal{B}^{\mathrm{str}}\) is not merely an alternative structure but is \emph{2-equivalent} to the original weak 2-category \(\mathcal{B}\). This 2-equivalence means that there exist pseudofunctors
\[
F: \mathcal{B} \to \mathcal{B}^{\mathrm{str}} \quad \text{and} \quad G: \mathcal{B}^{\mathrm{str}} \to \mathcal{B},
\]
together with pseudonatural transformations establishing that the composites \(G \circ F\) and \(F \circ G\) are equivalent to the respective identity 2-functors.

\paragraph{Implications of 2-Equivalence:}
\begin{itemize}
  \item \textbf{Preservation of Essential Properties:}  
  Despite the change in structure from weak to strict, all the universal properties—such as those of products, coproducts, and exponentials—and the logical semantics of the original bicategory are preserved up to coherent isomorphism. Thus, the strictified 2-category retains all the essential features of \(\mathcal{B}\).

  \item \textbf{Interchangeability for Analysis:}  
  Since \(\mathcal{B}\) and \(\mathcal{B}^{\mathrm{str}}\) are 2-equivalent, one can choose to work within the strictified framework for simplicity without loss of generality. This equivalence provides a solid theoretical foundation for applying more straightforward reasoning and computational techniques.

  \item \textbf{Coherence Simplification:}  
  The process of strictification, validated by 2-equivalence, simplifies the management of coherence conditions. By moving to a structure where associativity and unit laws hold strictly, the necessity for tracking complex coherence data in every argument is greatly reduced.
\end{itemize}

In summary, the notion of 2-equivalence ensures that while the presentation of the integrated structure is altered for ease of manipulation, its fundamental categorical and logical properties remain intact. This guarantees that the strictified 2-category is a faithful and robust representation of the original weak structure.

\subsection{Outline of Formal Proofs}
\label{subsec:outline_formal_proofs}

To rigorously show that the universal properties remain intact after strictification, the formal proofs generally follow a structured approach. The key stages in the proof strategy are as follows:

\begin{enumerate}
  \item \textbf{Construction of the Free Strict 2-Category:}  
  Start by building a free strict 2-category \(\mathcal{F}\) on the underlying data (objects, 1-morphisms, and 2-morphisms) of the original weak 2-category \(\mathcal{B}\). This free construction includes all the compositional data without enforcing the coherence isomorphisms.

  \item \textbf{Imposition of Coherence Relations:}  
  Identify the coherence isomorphisms (such as the associators and unitors) that express the weak structure of \(\mathcal{B}\). Introduce formal relations in \(\mathcal{F}\) corresponding to these coherence isomorphisms. For example, for any composable 1-morphisms \(f\), \(g\), and \(h\), impose the relation
  \[
  (h \circ g) \circ f \sim h \circ (g \circ f),
  \]
  along with similar relations for the unitors.

  \item \textbf{Quotient by Coherence Data:}  
  Take the quotient of the free strict 2-category \(\mathcal{F}\) by the equivalence relation generated by the coherence relations. The resulting strict 2-category, denoted \(\mathcal{B}^{\mathrm{str}}\), has strict associativity and unit laws while being biequivalent to \(\mathcal{B}\).

  \item \textbf{Establishment of 2-Equivalence:}  
  Construct pseudofunctors
  \[
  F: \mathcal{B} \to \mathcal{B}^{\mathrm{str}} \quad \text{and} \quad G: \mathcal{B}^{\mathrm{str}} \to \mathcal{B},
  \]
  along with pseudonatural transformations that establish the biequivalence:
  \[
  G \circ F \simeq \operatorname{Id}_{\mathcal{B}}, \quad F \circ G \simeq \operatorname{Id}_{\mathcal{B}^{\mathrm{str}}}.
  \]
  This step confirms that all essential universal properties, such as those defining products, coproducts, and exponentials, are preserved in the strictified structure.

  \item \textbf{Verification of Universal Property Preservation:}  
  Finally, verify that every universal construction in \(\mathcal{B}\) has a counterpart in \(\mathcal{B}^{\mathrm{str}}\) which satisfies the same universal property strictly. This is accomplished by demonstrating that the mediating morphisms and the corresponding coherence diagrams in \(\mathcal{B}^{\mathrm{str}}\) exactly reflect the factorization and uniqueness conditions present in \(\mathcal{B}\).
\end{enumerate}

\paragraph{Summary:}
This outline provides a roadmap for the formal proofs showing that the strictification process preserves universal properties. By constructing a free strict 2-category, imposing coherence relations, forming the appropriate quotient, and establishing a biequivalence with the original weak structure, one ensures that the universal properties—central to the logical and categorical semantics—remain intact after strictification.

\newpage
\section{Examples and Case Studies}
\subsection{Strictification in Product and Coproduct Contexts}
\label{subsec:strictification_prod_coproduct}

In weak 2-categories, the universal properties of products and coproducts hold only up to a coherent natural isomorphism. Through strictification, these properties are “rigidified” so that they hold strictly in the resulting strict 2-category. Below, we present concrete examples illustrating this process.

\paragraph{Example: Product Structure}

\textbf{Before Strictification (Weak Product):}
In a weak 2-category, the product \(A \times B\) is defined with projection 1-morphisms
\[
\pi_A: A \times B \to A,\quad \pi_B: A \times B \to B,
\]
such that for any object \(X\) and 1-morphisms \(f: X \to A\) and \(g: X \to B\), there exists a mediating 1-morphism \(\langle f, g \rangle: X \to A \times B\) satisfying
\[
\pi_A \circ \langle f, g \rangle \cong f \quad \text{and} \quad \pi_B \circ \langle f, g \rangle \cong g.
\]
The commutativity is expressed by a natural isomorphism:
\[
\xymatrix@R=5em@C=5em{
X 
  \ar@/^1.5pc/[drr]^-{g} 
  \ar[dr]|-{\langle f, g \rangle} 
  \ar@/_1.5pc/[ddr]_-{f} & & \\
& A \times B \ar[r]^-{\pi_B} \ar[d]_-{\pi_A} & B \\
& A &
}
\]
with a 2-morphism (say, \(\theta\)) providing the isomorphism between \(\pi_A \circ \langle f, g \rangle\) and \(f\) (and similarly for \(\pi_B\) and \(g\)).

\textbf{After Strictification (Strict Product):}
Strictification replaces the above isomorphisms with strict equalities. In the strict 2-category \(\mathcal{B}^{\mathrm{str}}\), the product \(A \times B\) satisfies:
\[
\pi_A \circ \langle f, g \rangle = f \quad \text{and} \quad \pi_B \circ \langle f, g \rangle = g,
\]
with no need for additional 2-morphism data to mediate the equivalence. The same diagram now commutes on the nose:
\[
\xymatrix@R=5em@C=5em{
X 
  \ar@/^1.5pc/[drr]^-{g} 
  \ar[dr]|-{\langle f, g \rangle} 
  \ar@/_1.5pc/[ddr]_-{f} & & \\
& A \times B \ar[r]^-{\pi_B} \ar[d]_-{\pi_A} & B \\
& A &
}
\]

\paragraph{Example: Coproduct Structure}

\textbf{Before Strictification (Weak Coproduct):}
In a weak 2-category, the coproduct \(A+B\) is defined with injection 1-morphisms
\[
\iota_A: A \to A+B,\quad \iota_B: B \to A+B,
\]
such that for any object \(X\) and 1-morphisms \(f: A \to X\) and \(g: B \to X\), there exists a unique 1-morphism \([f, g]: A+B \to X\) satisfying
\[
[f, g] \circ \iota_A \cong f \quad \text{and} \quad [f, g] \circ \iota_B \cong g.
\]
This is captured by the diagram:
\[
\begin{tikzcd}[column sep=large, row sep=large]
A \arrow[r, "\iota_A"] \arrow[dr, "f"'] & A+B \arrow[d, dashed, "{[f, g]}"] & B \arrow[l, "\iota_B"'] \arrow[dl, "g"] \\
& X &
\end{tikzcd}
\]
with 2-morphisms ensuring the cone's commutativity.

\textbf{After Strictification (Strict Coproduct):}
After strictification, the universal property holds strictly. In the strict 2-category \(\mathcal{B}^{\mathrm{str}}\), we have:
\[
[f, g] \circ \iota_A = f \quad \text{and} \quad [f, g] \circ \iota_B = g,
\]
so that the above diagram commutes exactly, with no additional coherence data required.

\paragraph{Summary:}
These examples demonstrate that strictification transforms weak universal properties—where commutativity is ensured up to natural isomorphism—into strict equalities. This process not only simplifies the theoretical framework by eliminating the need to manage 2-morphism coherence data but also preserves the essential categorical semantics of logical connectives.

\subsection{Strictification of Exponential Structures}
\label{subsec:strictification_exponential}

Exponential objects play a central role in modeling logical implication via currying and evaluation. In a weak 2-category, the universal property of an exponential object \(B^A\) with its evaluation map
\[
\mathrm{ev}: B^A \times A \to B,
\]
holds only up to a coherent natural isomorphism. The strictification process transforms this weak structure into one where the currying correspondence and evaluation hold strictly. We illustrate this with the following steps and examples.

\paragraph{Before Strictification (Weak Exponential):}  
In a cartesian closed weak 2-category, for any object \(X\) and any 1-morphism 
\[
f: X \times A \to B,
\]
there exists a 1-morphism (the curried form) \(\tilde{f}: X \to B^A\) such that
\[
f \cong \mathrm{ev} \circ (\tilde{f} \times \mathrm{id}_A),
\]
where the equivalence is witnessed by a natural isomorphism. This means that while the diagram
\[
\begin{tikzcd}[column sep=large, row sep=large]
X \times A \arrow[r, "f"] \arrow[d, "\tilde{f}\times\mathrm{id}_A"'] & B \\
B^A \times A \arrow[ur, "\mathrm{ev}"'] &
\end{tikzcd}
\]
commutes, the equality holds only up to a specified 2-morphism.

\paragraph{Strictification Process:}  
The strictification procedure involves the following steps:
\begin{enumerate}
  \item \textbf{Free Construction:} Construct a free strict 2-category on the underlying data of the weak structure, where the exponential object and its associated maps are included without enforcing coherence.
  
  \item \textbf{Imposition of Coherence Relations:} Introduce relations corresponding to the natural isomorphisms that witness the weak currying property. For instance, impose that for any \(f: X \times A \to B\),
  \[
  \mathrm{ev} \circ (\tilde{f}\times\mathrm{id}_A) = f,
  \]
  as a strict equality.
  
  \item \textbf{Quotienting:} Form the quotient by these relations to obtain a strict 2-category \(\mathcal{B}^{\mathrm{str}}\) in which the evaluation and currying maps satisfy the universal property on the nose.
  
  \item \textbf{Biequivalence:} Establish a biequivalence between the original weak structure and the strictified one via pseudofunctors and pseudonatural transformations, ensuring that the essential exponential (and logical) semantics are preserved.
\end{enumerate}

\paragraph{Example in \(\mathbf{Set}\):}  
In the familiar category \(\mathbf{Set}\), the exponential object \(B^A\) is the set of all functions from \(A\) to \(B\), and the evaluation map is defined as
\[
\mathrm{ev}(h,a)=h(a).
\]
For any function \(f: X \times A \to B\), currying gives the unique function \(\tilde{f}: X \to B^A\) defined by
\[
\tilde{f}(x)(a) = f(x,a).
\]
In \(\mathbf{Set}\), the currying correspondence holds strictly. In a general weak 2-category, however, the equality
\[
f = \mathrm{ev} \circ (\tilde{f}\times\mathrm{id}_A)
\]
is replaced by a natural isomorphism. The strictification process, as described above, converts this “up-to-isomorphism” condition into a strict equality in the strictified 2-category \(\mathcal{B}^{\mathrm{str}}\), ensuring that the exponential structure behaves exactly as in \(\mathbf{Set}\).

\paragraph{Conclusion:}  
The strictification of exponential structures ensures that the currying process and the evaluation map, fundamental to the interpretation of logical implication, hold strictly. This not only simplifies subsequent reasoning but also guarantees that the universal properties inherent in exponentiation are preserved in the integrated, strict 2-categorical framework.

\subsection{Comparative Analysis}
\label{subsec:comparative_analysis_comparison}

In the integrated 2-categorical framework, universal properties and coherence conditions are initially maintained only up to natural isomorphism, leading to a weak structure. This “weak” phase is characterized by:

\begin{itemize}
  \item \textbf{Weak Universal Properties:}  
  Constructions such as products, coproducts, and exponentials satisfy their universal properties up to coherent natural isomorphisms. For instance, given morphisms \(f: X \to A\) and \(g: X \to B\), there exists a mediating 1-morphism \(\langle f, g \rangle: X \to A \times B\) with
  \[
  \pi_A \circ \langle f, g \rangle \cong f \quad \text{and} \quad \pi_B \circ \langle f, g \rangle \cong g,
  \]
  where “\(\cong\)” denotes a natural isomorphism.

  \item \textbf{Complex Coherence Data:}  
  The presence of associators, unitors, and other coherence isomorphisms requires intricate verification to ensure that different composition paths yield equivalent results.
\end{itemize}

After strictification, the structure is transformed into a strict 2-category where:

\begin{itemize}
  \item \textbf{Strict Universal Properties:}  
  The same constructions now satisfy their universal properties exactly; that is,
  \[
  \pi_A \circ \langle f, g \rangle = f \quad \text{and} \quad \pi_B \circ \langle f, g \rangle = g,
  \]
  with no need for additional 2-morphism data to mediate between different composition routes.
  
  \item \textbf{Simplified Coherence:}  
  With the associativity and unit laws holding strictly, the overall coherence of the structure is significantly simplified. This reduction in complexity enhances both theoretical analysis and practical usability, as the management of coherence isomorphisms becomes unnecessary.
\end{itemize}

\noindent
In summary, strictification transforms the integrated weak structure—where universal properties are preserved only up to natural isomorphism—into a more manageable, strict framework that retains all essential logical and categorical properties. This improvement not only simplifies the verification of coherence but also streamlines further constructions and applications.

\newpage
\section{Discussion and Implications}
\subsection{Advantages of the Strictified Structure}
\label{subsec:advantages_strictified}

Strictification yields a 2-category in which the associativity and unit laws hold exactly, rather than merely up to isomorphism. This improvement offers several benefits:

\begin{itemize}
  \item \textbf{Simpler Composition Rules:}  
  With strict associativity and unitality, the rules for composing 1-morphisms and 2-morphisms become straightforward. There is no need to track the additional coherence data (such as associators and unitors), which simplifies both the formulation and manipulation of composite morphisms.

  \item \textbf{Easier Proof Manipulation:}  
  The elimination of weak equalities (i.e., equalities holding only up to natural isomorphism) streamlines formal proofs. With strict equalities, diagrams commute exactly, reducing the complexity of verifying and managing coherence conditions in proofs.

  \item \textbf{Enhanced Clarity and Modularity:}  
  A strict 2-category provides a clearer and more modular framework, making it easier to isolate and analyze individual components of the structure. This clarity facilitates the integration of additional constructions or logical connectives without being encumbered by complex coherence isomorphisms.

  \item \textbf{Preservation of Essential Structure:}  
  Although strictification replaces weak coherence with strict equalities, the underlying logical and categorical properties are preserved up to 2-equivalence. This ensures that the semantic content remains intact while benefiting from a simpler operational framework.
\end{itemize}

In summary, obtaining a strict 2-category through strictification greatly simplifies composition rules and proof techniques, thereby enhancing both the theoretical and practical aspects of categorical analysis.

\subsection{Limitations and Open Questions}
\label{subsec:limitations_open_questions}

While strictification offers significant simplifications by converting weak 2-categorical structures into strict 2-categories, several limitations and open questions remain:

\begin{itemize}
  \item \textbf{Loss of Weak Structure Information:}  
  Strictification replaces weak equalities (up to natural isomorphism) with strict equalities. Although the strictified 2-category is biequivalent to the original structure, some nuances of the weak coherence data may be obscured, potentially losing insights into the inherent flexibility of the original system.

  \item \textbf{Complexity in Higher Dimensions:}  
  The techniques for strictification are well-developed for bicategories and weak 2-categories. However, extending these methods to \(n\)-categories (for \(n > 2\)) presents significant challenges. It remains an open question how to effectively strictify higher categorical structures while preserving all essential properties.

  \item \textbf{Computational and Practical Challenges:}  
  In practical applications, especially those involving computer-assisted proof verification or categorical semantics in computer science, the process of strictification may introduce additional computational complexity. Finding efficient algorithms for strictification and automating the verification of coherence conditions is an area ripe for further research.

  \item \textbf{Impact on Semantic Interpretations:}  
  While strictification simplifies many aspects of theoretical analysis, there is an ongoing debate about whether the transition to strict equalities might oversimplify or obscure certain semantic features that are naturally expressed by weak equivalences. Determining the optimal balance between strictness and flexibility remains an open question.
  
  \item \textbf{Extensions to Enriched and Internal Categories:}  
  Further investigation is needed to understand how strictification interacts with enriched category theory and internal category structures. In these contexts, the interplay between the enrichment and the coherence data may present additional challenges.
\end{itemize}

\noindent
Addressing these limitations and open questions will be crucial for advancing the theory of strictification and for applying these concepts to increasingly complex categorical frameworks.

\subsection{Impact on the Overall Integrated Category}
\label{subsec:impact_integrated_category}

Strictification has a profound impact on the integrated 2-categorical framework by converting weak coherence conditions into strict equalities. This transformation influences the integrated category in several key ways:

\begin{itemize}
  \item \textbf{Enhanced Coherence:}  
  By enforcing strict associativity and unit laws, strictification eliminates the need to constantly manage complex coherence data (such as associators and unitors that only hold up to isomorphism). As a result, all composition diagrams commute on the nose, simplifying both theoretical analysis and practical manipulations.

  \item \textbf{Preservation of Universal Properties:}  
  Despite the conversion of weak equalities to strict ones, the essential universal properties of constructions (such as products, coproducts, and exponentials) are preserved via the established biequivalence between the original and the strictified structure. This guarantees that the logical semantics and the categorical behavior remain intact.

  \item \textbf{Increased Reliability:}  
  A strict 2-category provides a more robust and unambiguous framework, reducing potential sources of error in proofs and computations. This enhanced reliability facilitates clearer communication of ideas and more efficient reasoning about integrated structures.

  \item \textbf{Simplification of Further Constructions:}  
  With the removal of weak coherence data, subsequent constructions and extensions (such as further integrations or applications in logic and computer science) can be developed with simpler and more straightforward composition rules. This streamlining is particularly advantageous in complex or large-scale categorical frameworks.
\end{itemize}

In summary, strictification improves the overall integrated category by ensuring that coherence and universality hold in a strict sense, which in turn enhances the reliability and usability of the categorical framework for both theoretical investigations and practical applications.

\newpage
\section{Summary and Future Directions}
\label{sec:summary_future}

In this work, we have developed a systematic approach to strictification in integrated 2-categories, achieving several key contributions:
\begin{itemize}
  \item \textbf{Unified Integration:} Local categories, each modeling different logical connectives (such as negation, products, coproducts, and exponentials), are successfully lifted into a coherent 2-categorical framework.
  \item \textbf{Preservation of Universal Properties:} By employing pseudo-limits and pseudo-colimits, the universal properties of these local structures are preserved up to coherent natural isomorphism.
  \item \textbf{Simplification via Strictification:} Weak structures, where associativity and unit laws hold only up to natural isomorphism, are transformed into strict 2-categories. This strictification simplifies composition rules and reduces the complexity of coherence verification.
  \item \textbf{Biequivalence Assurance:} The strictified 2-category is shown to be biequivalent to the original weak structure, ensuring that no essential categorical or logical properties are lost.
\end{itemize}

Looking forward, several promising research directions remain:
\begin{itemize}
  \item \textbf{Extension to Higher Categories:} Investigate the application of strictification techniques to \(n\)-categories for \(n > 2\), where managing coherence becomes even more challenging.
  \item \textbf{Automated Coherence Verification:} Develop algorithmic and computational tools for the automated verification of coherence conditions in complex integrated categorical structures.
  \item \textbf{Applications in Logical Frameworks:} Explore practical applications of strictification in concrete logical systems, such as type theory, programming language semantics, and formal verification, where a strict structure can streamline reasoning.
  \item \textbf{Interactions with Enriched and Internal Categories:} Examine how strictification interacts with enriched category theory and internal category structures, potentially leading to new theoretical insights and methods.
\end{itemize}

These future directions will deepen our understanding of strictification and coherence in higher category theory, and expand the applicability of these techniques to a wide range of mathematical and computational domains.

\chapter{Evaluation through Concrete Logical Examples}
\section{Introduction to Evaluation Examples}
\subsection{Objectives of the Evaluation}
\label{subsec:evaluation_objectives}

The purpose of this chapter is to validate that the integrated 2-categorical framework faithfully captures the universal properties and coherence conditions of the original logical structures. In particular, the evaluation focuses on:

\begin{itemize}
  \item Demonstrating that logical constructs such as products, coproducts, and exponential objects retain their universal properties in the integrated category.
  \item Verifying that all coherence diagrams—ensuring that different composition paths are equivalent up to natural isomorphism—commute as expected.
  \item Illustrating, through concrete logical examples, how the integrated structure supports consistent and reliable reasoning about logical connectives.
  \item Assessing the effectiveness of strictification in simplifying the overall structure while preserving essential semantic properties.
\end{itemize}

Through these evaluations, we aim to confirm that the integrated category not only unifies local categories but also maintains the desired logical semantics and categorical coherence, thereby providing a robust foundation for further theoretical and practical applications.

\subsection{Overview of Selected Examples}
\label{subsec:overview_selected_examples}

In this chapter, we will analyze a range of logical examples to validate the performance of the integrated 2-categorical framework. The selected examples include:

\begin{itemize}
  \item \textbf{Exponential Structures and Currying:}  
  We will examine how exponential objects, along with their evaluation maps, capture the notion of logical implication. In particular, the currying process will be analyzed to demonstrate that the universal property of exponentials is preserved (up to natural isomorphism) in the integrated setting.

  \item \textbf{Deduction-Style Evaluations:}  
  Examples inspired by deduction-style reasoning will be presented. These will show how inference rules and logical deductions are modeled within the framework, highlighting the role of 2-morphisms in ensuring the coherence of complex derivations.

  \item \textbf{Other Logical Connectives:}  
  Additional examples involving product (logical conjunction) and coproduct (logical disjunction) constructions will be discussed to provide a comprehensive overview of the framework’s applicability.
\end{itemize}

This overview sets the stage for a detailed evaluation of the integrated structure, demonstrating its ability to faithfully capture universal properties and coherence conditions fundamental to categorical logic.

\newpage
\section{Evaluation of Exponential Structures and Currying}
\subsection{Exponential Objects and Their Universal Property}
\label{subsec:exponential_objects_universal}

In a cartesian closed category \(\mathcal{C}\), an exponential object \(B^A\) represents the space of morphisms from \(A\) to \(B\) and models logical implication. The construction of \(B^A\) comes equipped with an evaluation morphism
\[
\mathrm{ev}: B^A \times A \to B,
\]
which plays a central role in the universal property that characterizes the exponential object.

\begin{defn}
\label{defn:exponential_object}
Let \(\mathcal{C}\) be a cartesian closed category and \(A, B \in \operatorname{Ob}(\mathcal{C})\). The object \(B^A\) is called the \emph{exponential object} from \(A\) to \(B\) if there exists an evaluation morphism
\[
\mathrm{ev}: B^A \times A \to B,
\]
such that for every object \(X \in \operatorname{Ob}(\mathcal{C})\) and for every morphism
\[
f: X \times A \to B,
\]
there exists a unique morphism
\[
\tilde{f}: X \to B^A,
\]
called the \emph{currying} of \(f\), making the following diagram commute:
\[
\begin{tikzcd}[column sep=large, row sep=large]
X \times A \arrow[r, "f"] \arrow[d, "\tilde{f}\times \mathrm{id}_A"'] & B \\
B^A \times A \arrow[ur, "\mathrm{ev}"'] &
\end{tikzcd}
\]
That is, we have the equality
\[
f = \mathrm{ev} \circ (\tilde{f}\times \mathrm{id}_A).
\]
\end{defn}

\paragraph{Interpretation:}
The universal property of the exponential object asserts that giving a morphism \(f: X \times A \to B\) is equivalent to giving a morphism \(\tilde{f}: X \to B^A\). This correspondence, known as currying, is fundamental in both categorical logic and functional programming, where it models the idea that a function of two variables can be viewed as a function that returns another function.

\paragraph{Role in Logical Semantics:}
Within the integrated 2-categorical framework, the exponential object \(B^A\) provides a categorical representation of logical implication. The evaluation morphism \(\mathrm{ev}\) corresponds to the application of a function to an argument, and the universal property ensures that every such application factors uniquely through \(B^A\). This structure underpins the logical notion of implication and enables the seamless integration of function spaces in the overall logical framework.

\subsection{Diagrammatic Analysis of Currying}
\label{subsec:diagrammatic_analysis_currying}

The currying process in a cartesian closed 2-category is encapsulated by the following diagram, which shows how a morphism
\[
f: X \times A \to B
\]
corresponds uniquely (up to natural isomorphism) to its curried form
\[
\tilde{f}: X \to B^A.
\]
The standard evaluation diagram is given by:
\[
\begin{tikzcd}[column sep=large, row sep=large]
X \times A \arrow[r, "f"] \arrow[d, "\tilde{f}\times \mathrm{id}_A"'] & B \\
B^A \times A \arrow[ur, "\mathrm{ev}"'] &
\end{tikzcd}
\]
Here, the evaluation map \(\mathrm{ev}\) applies a function \(h \in B^A\) to an element \(a \in A\) (i.e., \(\mathrm{ev}(h,a)=h(a)\)). The universal property of the exponential object \(B^A\) asserts that for each morphism \(f\), there is a unique (up to a natural isomorphism) mediating morphism \(\tilde{f}\) such that the diagram commutes.

\paragraph{Ensuring Uniqueness via 2-Morphisms:}
Suppose there exist two mediating morphisms \(\tilde{f}\) and \(\tilde{f}'\) satisfying the property that
\[
\mathrm{ev} \circ (\tilde{f}\times \mathrm{id}_A) = f \quad \text{and} \quad \mathrm{ev} \circ (\tilde{f}'\times \mathrm{id}_A) = f.
\]
Then, by the universal property, there exists a unique 2-morphism
\[
\theta: \tilde{f} \Rightarrow \tilde{f}',
\]
which provides the coherent isomorphism that identifies the two mediating morphisms. This 2-morphism ensures that even if different choices for \(\tilde{f}\) exist, they are equivalent in a coherent manner, preserving the uniqueness of the currying process.

\paragraph{Summary:}
The above diagram, together with the existence of the natural isomorphism \(\theta\), illustrates how the universal property of exponential objects is enforced in a 2-categorical setting. Natural isomorphisms (2-morphisms) ensure that any two ways of factoring \(f: X \times A \to B\) through \(B^A\) are uniquely and coherently equivalent, thus maintaining the integrity of the currying process.

\subsection{Case Study: Currying in a Cartesian Closed Category}
\label{subsec:case_study_currying}

To illustrate the principles of currying within a Cartesian closed category, consider the well-known example from the category \(\mathbf{Set}\). In \(\mathbf{Set}\), for any sets \(A\) and \(B\), the exponential object \(B^A\) is defined as the set of all functions from \(A\) to \(B\):
\[
B^A = \{ h \mid h: A \to B \}.
\]
The evaluation map is given by:
\[
\mathrm{ev}: B^A \times A \to B,\quad \mathrm{ev}(h,a) = h(a).
\]

\paragraph{Currying Process in \(\mathbf{Set}\):}  
For any set \(X\) and a function 
\[
f: X \times A \to B,
\]
the currying process produces a unique function
\[
\tilde{f}: X \to B^A,
\]
defined by:
\[
\tilde{f}(x)(a) = f(x,a) \quad \text{for all } x\in X,\; a\in A.
\]
This relationship is captured by the commutative diagram:
\[
\begin{tikzcd}[column sep=large, row sep=large]
X \times A \arrow[r, "f"] \arrow[d, "\tilde{f}\times \mathrm{id}_A"'] & B \\
B^A \times A \arrow[ur, "\mathrm{ev}"'] &
\end{tikzcd}
\]
In \(\mathbf{Set}\), the above equality holds strictly; however, in a general 2-categorical framework, the commutativity of this diagram is required only up to a coherent natural isomorphism \(\theta_f\):
\[
f \cong \mathrm{ev} \circ (\tilde{f}\times \mathrm{id}_A).
\]

\paragraph{Integration and Coherence:}  
In the integrated 2-category, the exponential structure is lifted along with the other local categories. The key aspects are:
\begin{itemize}
  \item \textbf{Preservation of Universality:}  
  The universal property of the exponential object is maintained in the sense that every morphism \(f: X \times A \to B\) factors uniquely (up to natural isomorphism) through \(B^A\) via the currying process. The mediating morphism \(\tilde{f}: X \to B^A\) is unique up to a coherent natural isomorphism.
  
  \item \textbf{Coherence via 2-Morphisms:}  
  The natural isomorphism \(\theta_f\) that relates \(f\) and \(\mathrm{ev}\circ(\tilde{f}\times \mathrm{id}_A)\) ensures that different composition routes (which may arise in the integrated structure) are coherently equivalent. In other words, the diagram commutes in the 2-categorical sense, with \(\theta_f\) serving as the coherence data.
  
  \item \textbf{Logical Implication:}  
  The exponential object \(B^A\) models the logical implication \(A \to B\). Currying thus not only represents a fundamental functional abstraction but also provides a categorical interpretation of implication. The strictification process further ensures that these logical operations remain well-behaved and unambiguous in the integrated setting.
\end{itemize}

\paragraph{Conclusion:}  
This case study in \(\mathbf{Set}\) demonstrates that currying is a robust mechanism for capturing the universal property of exponential objects. When integrated into a 2-categorical framework, the use of natural isomorphisms guarantees that these properties are preserved coherently, thus supporting a consistent and semantically rich interpretation of logical implication.

\newpage
\section{Evaluation through Deduction-Style Examples}
\subsection{Categorical Interpretation of Deductive Rules}
\label{subsec:categorical_deduction_rules}

In natural deduction systems, logical inference rules are used to derive conclusions from premises. In the integrated 2-categorical framework, these deductive rules are modeled via universal constructions and the coherence of 2-morphisms. Specifically, each inference rule corresponds to a universal property in a local category that has been lifted into the 2-categorical setting.

For instance, consider the \emph{implication introduction} rule in natural deduction, which allows one to infer \(A \to B\) from a derivation that assumes \(A\) to conclude \(B\). Categorically, this is modeled using the exponential object \(B^A\) together with the currying process. The evaluation morphism
\[
\mathrm{ev}: B^A \times A \to B
\]
and the corresponding universal property (as detailed in Definition~\ref{defn:exponential_object}) encapsulate the idea that a function \(f: X \times A \to B\) factors uniquely (up to natural isomorphism) as \(\tilde{f}: X \to B^A\). This factorization represents the passage from a deduction that uses \(A\) as an assumption to a proof of \(A \to B\).

Similarly, other deductive rules such as conjunction introduction and disjunction elimination are modeled using the product and coproduct constructions, respectively. For example, the \emph{conjunction introduction} rule, which infers \(A \land B\) from proofs of \(A\) and \(B\), is captured by the universal property of the product \(A \times B\). The existence of a unique mediating morphism
\[
\langle f, g \rangle: X \to A \times B,
\]
for any pair of morphisms \(f: X \to A\) and \(g: X \to B\), reflects the deductive process of combining separate proofs into one cohesive proof of the conjunction.

\paragraph{Diagrammatic Representation:}
For the exponential structure (implication), the currying diagram illustrates the categorical modeling of the implication introduction rule:
\[
\begin{tikzcd}[column sep=large, row sep=large]
X \times A \arrow[r, "f"] \arrow[d, "\tilde{f}\times \mathrm{id}_A"'] & B \\
B^A \times A \arrow[ur, "\mathrm{ev}"'] &
\end{tikzcd}
\]
Here, the unique 1-morphism \(\tilde{f}\) (the curried form of \(f\)) embodies the transformation of a derivation using \(A\) into a derivation of \(A \to B\).

\paragraph{Coherence via 2-Morphisms:}
In the integrated category, natural isomorphisms (2-morphisms) ensure that the various ways of composing these deductive structures are coherently equivalent. They guarantee that, although the factorization in currying or the mediating morphisms for products and coproducts are defined only up to isomorphism in the weak setting, they become strictly determined in the strictified framework without losing the underlying logical semantics.

\noindent
Overall, the categorical interpretation of deductive rules allows us to view natural deduction as a series of universal constructions within an integrated 2-category, where coherence conditions and natural isomorphisms play a key role in ensuring the reliability and consistency of logical inference.

\subsection{Verification via Coherence Diagrams}
\label{subsec:verification_deduction_diagrams}

To verify that deduction steps are represented coherently in the integrated categorical framework, we use commutative diagrams that illustrate how different composition paths are related via 2-morphisms. Below are two key examples.

\paragraph{Example 1: Currying in Exponential Structures}  
Given a morphism 
\[
f: X \times A \to B,
\]
its curried form is a unique (up to natural isomorphism) 1-morphism 
\[
\tilde{f}: X \to B^A,
\]
satisfying the universal property of the exponential object. This relationship is depicted by:

\begin{center}
\begin{tikzcd}
X \times Y \arrow[rrr, "f"] \arrow[dd, "\tilde{f} \times \mathrm{id}_A"'] & {} \arrow[d, "\theta_f", Rightarrow] &  & B \\
                                                                          & {}                                   &  &   \\
B^A \times A \arrow[rrruu, "ev"']                                         &                                      &  &  
\end{tikzcd}
\end{center}

Here, the 2-morphism \(\theta_f\) certifies that the two paths—from \(X \times A\) to \(B\) via \(f\) and via \(\tilde{f}\) followed by \(ev\)—are naturally isomorphic. This coherence guarantees that the currying process faithfully represents the universal property of exponentials.

\paragraph{Example 2: Product Structure in Deductive Reasoning}  
Consider the product \(A \times B\) with projection morphisms \(\pi_A: A \times B \to A\) and \(\pi_B: A \times B \to B\). For any object \(X\) with morphisms \(f: X \to A\) and \(g: X \to B\), there exists a mediating morphism \(\langle f, g \rangle: X \to A \times B\) such that:
\[
\pi_A \circ \langle f, g \rangle \cong f \quad \text{and} \quad \pi_B \circ \langle f, g \rangle \cong g.
\]
This is represented diagrammatically as:
\[
\begin{tikzcd}
X \arrow[rrr, "g", bend left] \arrow[rrd, "{\langle f, g \rangle}", dashed, bend left] \arrow[rdd, "f"', bend right=49] \arrow[rdd, "{\pi_A \circ \langle f, g \rangle}" description, bend left=49] &                                           &                                                     & B                                              \\
{}                                                                                                                                                                                                  & {} \arrow[l, "\theta_{prod}", Rightarrow] & A \times B \arrow[ld, "\pi_A"] \arrow[ru, "\pi_B"'] &                                                \\
                                                                                                                                                                                                    & A                                         &                                                     & {} \arrow[loop, distance=2em, in=305, out=235]
\end{tikzcd}
\]
The 2-morphism \(\theta_{prod}\) ensures that the composition \(\pi_A \circ \langle f, g \rangle\) is coherently isomorphic to \(f\) (and similarly for \(\pi_B\) and \(g\)), thereby verifying the universal property of the product.

\paragraph{Conclusion:}  
These diagrams demonstrate that the deduction steps—such as currying and the mediating morphisms for products—are verified via 2-morphisms. Such 2-morphisms ensure that all possible composition paths yield equivalent outcomes, thereby enforcing the universal properties and coherence conditions of the integrated structure.

\subsection{Case Study: Deductive Derivations in the Integrated Framework}
\label{subsec:case_study_deductive_derivations}

In this case study, we illustrate how a deduction-style derivation is performed and validated within the integrated 2-categorical framework. We focus on a typical example from natural deduction: the introduction of implication via currying.

\paragraph{Scenario:}  
Assume we have a deduction in a context \(X\) where, under the assumption \(A\), we derive \(B\) via a 1-morphism 
\[
f: X \times A \to B.
\]
This deduction represents the inference:
\[
\frac{\Gamma, A \vdash B}{\Gamma \vdash A \to B}.
\]
In categorical terms, \(f\) factors uniquely (up to a natural isomorphism) through the exponential object \(B^A\).

\paragraph{Currying Process and Coherence:}  
By the universal property of exponentials, there exists a unique mediating 1-morphism (the curried form) 
\[
\tilde{f}: X \to B^A,
\]
such that the evaluation morphism 
\[
\mathrm{ev}: B^A \times A \to B,
\]
satisfies, up to a natural isomorphism \(\theta_f\),
\[
f \cong \mathrm{ev} \circ (\tilde{f} \times \mathrm{id}_A).
\]
The situation is depicted in the following commutative diagram:
\[
\begin{tikzcd}
X \times A \arrow[rd, " \mathrm{ev} \circ (\tilde{f} \times \mathrm{id}_A)", bend left=60] \arrow[rd, "f"', bend right=71, shift right=3] & {} \arrow[ld, "\theta_f" description, Rightarrow] \\
{}                                                                                                                                        & B                                                
\end{tikzcd}
\]
This diagram demonstrates that regardless of the composition path taken—either directly via \(f\) or by first currying \(f\) and then applying \(\mathrm{ev}\)—the resulting morphism from \(X \times A\) to \(B\) is coherently equivalent.

\paragraph{Verification of the Deductive Derivation:}  
The coherence provided by the 2-morphism \(\theta_f\) ensures that the deduction is valid in the integrated framework. It guarantees that the transformation from the assumption \(A\) to the conclusion \(B\) is captured uniquely (up to natural isomorphism) by the currying process. Therefore, the implication \(A \to B\) is well-modeled by the exponential object \(B^A\) in the strictified setting.

\paragraph{Summary:}  
This example confirms that:
\begin{itemize}
  \item The universal property of the exponential object is preserved in the integrated 2-category.
  \item Currying provides a robust method for transforming deductions that depend on an assumption \(A\) into deductions of the implication \(A \to B\).
  \item The coherence of the entire process is guaranteed by the natural isomorphism \(\theta_f\), which ensures that different composition routes are equivalent.
\end{itemize}
Thus, the integrated framework successfully models deductive derivations, validating its effectiveness in representing logical inference within a categorical setting.

\newpage
\section{Comparative Analysis and Discussion}
\subsection{Comparison with Traditional Approaches}
\label{subsec:comparison_traditional}

Traditional 1-category approaches require that universal properties (such as those defining products, coproducts, and exponentials) hold strictly—diagrams must commute exactly. This strictness often forces an artificial rigidity that can obscure the natural “up-to-isomorphism” behavior observed in many logical and mathematical constructions.

In contrast, the integrated 2-categorical framework offers several key improvements:
\begin{itemize}
  \item \textbf{Enhanced Coherence:}  
  In the 2-categorical setting, universal properties are preserved up to coherent natural isomorphism rather than strict equality. This allows different composition paths (e.g., in the case of products or exponentials) to be related by well-defined 2-morphisms (such as associators and unitors), providing a more natural and flexible framework.

  \item \textbf{Flexibility in Modeling Logical Semantics:}  
  By allowing diagrams to commute only up to natural isomorphism, the integrated approach accurately reflects the inherent “up-to-isomorphism” nature of logical connectives. For instance, the currying process in exponential objects is modeled via a natural isomorphism, which is more faithful to the behavior observed in functional programming and logical deduction.

  \item \textbf{Modular Integration of Local Structures:}  
  The use of pseudo-limits and pseudo-colimits in the integrated framework facilitates the modular assembly of local categories, each with its own universal properties. Traditional 1-category methods often struggle to integrate such diverse structures without resorting to cumbersome, ad hoc techniques.

  \item \textbf{Simplification through Strictification:}  
  Although the integrated 2-category initially accommodates weak compositions, it can be strictified to yield a strict 2-category that is biequivalent to the original structure. This strictification simplifies reasoning and proofs without sacrificing the essential logical semantics, a flexibility not available in conventional 1-category approaches.
\end{itemize}

Overall, the integrated 2-categorical framework improves upon traditional methods by providing a more natural, coherent, and flexible way to model universal properties and logical constructs. This leads to clearer theoretical insights and more efficient practical applications.

\subsection{Evaluation of Coherence and Universality}
\label{subsec:evaluation_coherence_universal}

The evaluation examples presented in earlier chapters confirm that the integrated 2-categorical framework successfully preserves both the universal properties and the coherence conditions of the original local structures. In particular:

\begin{itemize}
  \item For products and coproducts, the mediating morphisms factor uniquely—up to natural isomorphism—in both the weak and strict settings. The associated coherence diagrams (e.g., the triangle and square diagrams) demonstrate that all composition paths are equivalent.
  
  \item In the case of exponential objects, the currying process is validated by commutative diagrams in which the evaluation map and the curried morphism are related by a canonical 2-morphism. This guarantees that the universal property of exponentials (i.e., the bijection 
  \(\operatorname{Hom}(X \times A, B) \cong \operatorname{Hom}(X, B^A)\)) is maintained in the integrated category.
  
  \item Overall, the natural isomorphisms that arise in the integration process—captured by pseudo-limits and pseudo-colimits—ensure that even though the diagrams do not strictly commute, they do so up to a coherent isomorphism. This result is further reinforced by strictification, which converts these weak equivalences into strict equalities without altering the underlying semantics.
\end{itemize}

Thus, the evaluation confirms that the integrated category not only unifies the local logical structures but also maintains their essential universal properties and coherence, thereby providing a robust foundation for further theoretical and practical applications.

\subsection{Implications for Categorical Models of Logic}
\label{subsec:implications_categorical_logic}

The evaluation results have significant implications for the study of categorical logic and its applications:

\begin{itemize}
  \item \textbf{Robust Logical Semantics:}  
  The integrated 2-categorical framework, by preserving universal properties and coherence conditions through natural isomorphisms, offers a more robust and faithful model for logical connectives. This framework supports a nuanced interpretation of logical operations (such as conjunction, disjunction, and implication) that aligns closely with their inherent “up-to-isomorphism” nature in mathematical logic.

  \item \textbf{Enhanced Structural Flexibility:}  
  The ability to replace strict commutativity with coherent natural isomorphisms allows for the modeling of complex logical systems in a flexible manner. This flexibility is crucial for addressing scenarios where strict equality is too limiting, such as in homotopy type theory or the semantics of programming languages.

  \item \textbf{Simplification through Strictification:}  
  The strictification process not only simplifies theoretical analysis by converting weak structures into strict ones but also enhances practical usability. Simplified composition rules facilitate automated reasoning, formal verification, and the development of computational tools within categorical logic.

  \item \textbf{Foundations for Advanced Applications:}  
  These evaluation results provide a solid foundation for further research into higher categorical models of logic, including the study of \(n\)-categories. They also open up new possibilities for applications in areas such as type theory, proof assistants, and semantic modeling in computer science.
\end{itemize}

Overall, the successful preservation of universal properties and coherence in the integrated framework reinforces the viability of using categorical methods to model logical systems. This contributes to both the theoretical advancement of categorical logic and its practical applications in diverse fields.

\newpage
\section{Conclusion of Evaluation}
\label{sec:conclusion_evaluation}

In this evaluation, we have confirmed that the integrated 2-categorical framework robustly preserves the universal properties and coherence conditions of local categories modeling various logical connectives. Key findings include:

\begin{itemize}
  \item \textbf{Preservation of Universal Properties:}  
  The integrated structure successfully maintains the universal properties of products, coproducts, and exponential objects (including the currying process), ensuring that the logical semantics are faithfully represented.

  \item \textbf{Effective Coherence Management:}  
  Through the use of natural isomorphisms (2-morphisms) and coherence diagrams (such as triangle, square, and pentagon diagrams), the framework guarantees that different composition paths are coherently equivalent.

  \item \textbf{Simplification via Strictification:}  
  The strictification process converts weak equivalences into strict equalities, simplifying both theoretical analysis and practical manipulation while preserving essential categorical and logical properties.
\end{itemize}

\bigskip

\textbf{Future Directions:}
\begin{itemize}
  \item \textbf{Extending to Higher Categories:}  
  Investigate the application of strictification techniques to \(n\)-categories for \(n > 2\), where the challenges of coherence become even more complex.

  \item \textbf{Algorithmic Coherence Verification:}  
  Develop automated tools and formal methods to verify the coherence of integrated structures, thereby streamlining the process of strictification and further integration.

  \item \textbf{Practical Applications in Logic and Computer Science:}  
  Explore concrete applications in areas such as type theory, programming language semantics, and formal verification, leveraging the simplified structure of the strictified 2-category.

  \item \textbf{Interaction with Enriched and Internal Categories:}  
  Study how strictification interacts with enriched category theory and internal categorical structures, potentially leading to broader and more nuanced applications.
\end{itemize}

In summary, the evaluation demonstrates that the integrated 2-categorical framework is both theoretically sound and practically effective. The preservation of universal properties and coherence conditions ensures a robust foundation for modeling logical systems, while the strictification process greatly simplifies further analysis and applications.

\chapter{Conclusion and Future Work}
\section{Summary of Contributions}
\subsection{Overview of Research Goals}
\label{subsec:research_goals}

The primary objectives of this study are to develop a robust framework for integrating logical connectives using higher category theory, and to demonstrate that this integrated structure preserves the essential universal properties and coherence conditions of each local component. Key research goals include:

\begin{itemize}
  \item \textbf{Integration of Local Categories:}  
  Construct local categories that model individual logical connectives—such as negation, conjunction, disjunction, and implication—and integrate these into a unified 2-categorical framework.

  \item \textbf{Preservation of Universal Properties:}  
  Ensure that the universal properties of products, coproducts, and exponential objects are maintained up to coherent natural isomorphism within the integrated category.

  \item \textbf{Coherence and Strictification:}  
  Analyze and verify coherence conditions across different composition paths using 2-morphisms, and apply strictification techniques to convert weak structures into strict ones without losing the underlying semantic content.

  \item \textbf{Application to Logical Semantics:}  
  Use the integrated framework to provide a faithful categorical semantics for logical connectives, thereby supporting advanced applications in logic, type theory, and computer science.

  \item \textbf{Foundations for Future Research:}  
  Establish a foundation for further exploration into higher categorical models, including potential extensions to \(n\)-categories and the development of automated tools for coherence verification.
\end{itemize}

\subsection{Key Contributions}
\label{subsec:key_contributions}

The key contributions of this study are summarized as follows:
\begin{itemize}
  \item \textbf{Systematic Construction of Local Categories:}  
  We developed local categories that rigorously model individual logical connectives, including negation, conjunction, disjunction, and implication, by capturing their universal properties through standard categorical constructions.

  \item \textbf{Extension to a 2-Category Framework:}  
  We extended the local categories into a unified 2-categorical framework by enriching them with 2-morphisms (natural isomorphisms), which allowed the universal properties to hold up to coherent isomorphism rather than strictly.

  \item \textbf{Integration via 2-Categorical Composition and Coherence Verification:}  
  We integrated the local categories using 2-categorical composition techniques, such as pseudo-limits and pseudo-colimits, and verified that all coherence conditions are satisfied through the construction of key commutative diagrams.

  \item \textbf{Application of Strictification Techniques:}  
  We applied strictification methods to convert the weak 2-category into a strict 2-category, thereby simplifying composition rules and facilitating easier manipulation and reasoning while preserving the essential logical semantics.

  \item \textbf{Evaluation through Concrete Examples:}  
  The framework was validated by evaluating concrete examples, including the currying process in exponential structures and deduction-style derivations in natural deduction systems, demonstrating that the integrated category faithfully captures both universal properties and coherence conditions.
\end{itemize}

\newpage
\section{Discussion of Findings}
\subsection{Integration and Coherence}
\label{subsec:integration_coherence}

The evaluation of our integrated 2-categorical framework demonstrates that the proposed integration method successfully preserves the universal properties of the local categories while maintaining coherence via 2-morphisms. In particular, the use of pseudo-limits and pseudo-colimits ensures that constructions such as products, coproducts, and exponential objects continue to satisfy their universal properties up to coherent natural isomorphism. Moreover, the incorporation of associators, unitors, and other 2-morphisms guarantees that all different composition paths yield equivalent outcomes, thus enforcing overall coherence within the integrated structure.

This robust preservation of both universality and coherence not only validates the logical semantics of the individual local categories but also provides a solid foundation for further theoretical development and practical applications in categorical logic.

\subsection{Impact of Strictification}
\label{subsec:impact_strictification}

The process of strictification, which converts weak 2-categorical structures into strict 2-categories, has significant theoretical and practical implications:

\begin{itemize}
  \item \textbf{Simplification of Composition Rules:}  
  In a strict 2-category, associativity and unit laws hold on the nose. This removes the need to manage complex coherence data (such as associators and unitors that exist only up to natural isomorphism) and simplifies the composition of 1-morphisms and 2-morphisms.

  \item \textbf{Streamlined Coherence Verification:}  
  With strict equalities in place, verifying the commutativity of diagrams becomes much more straightforward. This enhances both the clarity and reliability of formal proofs, reducing potential errors in reasoning about the structure.

  \item \textbf{Preservation of Essential Universal Properties:}  
  Although the transition from weak to strict structures replaces "up-to-isomorphism" commutativity with strict commutativity, the strictified 2-category is biequivalent to the original one. This ensures that all essential universal properties (for example, those of products, coproducts, and exponential objects) and logical semantics are preserved.

  \item \textbf{Improved Practical Usability:}  
  A strict 2-category provides a more robust framework for applications in logic, computer science, and related fields. The elimination of weak coherence conditions facilitates easier manipulation, computation, and implementation in formal systems and software tools.
\end{itemize}

In summary, strictification offers a powerful method for simplifying the theoretical landscape while retaining all the critical features of the original weak structure. This not only streamlines theoretical analysis and proof construction but also paves the way for practical applications in various domains.

\subsection{Evaluation Outcomes}
\label{subsec:evaluation_outcomes}

The evaluation examples have demonstrated that the integrated 2-categorical framework robustly preserves the universal properties and coherence conditions inherent in local categorical models of logical connectives. Key outcomes include:

\begin{itemize}
  \item \textbf{Preservation of Universal Properties:}  
  The universal constructions for products, coproducts, and exponential objects (via currying) remain intact in the integrated structure. This ensures that logical operations such as conjunction, disjunction, and implication are faithfully modeled.

  \item \textbf{Verified Coherence:}  
  Coherence diagrams—such as triangle, square, and pentagon diagrams—commute up to natural isomorphism, confirming that different composition paths yield equivalent results. This consistency is essential for reliable logical inference.

  \item \textbf{Simplification through Strictification:}  
  The application of strictification techniques converts weak equivalences into strict equalities, thereby streamlining both theoretical analysis and practical computations without loss of logical semantics.

  \item \textbf{Robust Integration:}  
  The successful integration of diverse local categories into a unified 2-categorical framework demonstrates that complex logical systems can be modeled in a modular, coherent, and flexible manner.
\end{itemize}

These outcomes have significant implications for categorical models of logic, as they provide a strong foundation for both further theoretical developments and practical applications in areas such as type theory, programming language semantics, and formal verification.

\newpage
\section{Limitations and Future Directions}
\subsection{Limitations of the Current Work}
\label{subsec:limitations_current}

While the integrated 2-categorical framework demonstrates substantial theoretical and practical advantages, several limitations remain:

\begin{itemize}
  \item \textbf{Complexity of Strictification Procedures:}  
  The process of strictifying weak 2-categories, although conceptually well-founded, involves intricate constructions and quotienting procedures. This complexity can be challenging both in terms of theoretical formalization and computational implementation.

  \item \textbf{Scalability to Higher Dimensions:}  
  The current work primarily addresses bicategories and weak 2-categories. Extending these strictification techniques to \(n\)-categories for \(n>2\) poses significant difficulties, particularly in managing the exponentially increasing coherence data.

  \item \textbf{Potential Loss of Structural Nuance:}  
  While strictification preserves the essential universal properties up to biequivalence, some of the nuanced information carried by the weak coherence data might be obscured in the strictified model. This could limit the ability to capture certain flexible or higher-order semantic phenomena.

  \item \textbf{Limited Practical Implementations:}  
  Although the framework shows promise for applications in logical semantics and computer science, concrete implementations and computational tools for automated coherence verification are still in the early stages and require further development.
\end{itemize}

\subsection{Potential Extensions}
\label{subsec:potential_extensions}

Future research can build on the present work by exploring several promising directions:

\begin{itemize}
  \item \textbf{Extension to Higher Categorical Frameworks:}  
  Investigate the possibility of extending the strictification and integration techniques developed here to \(n\)-categories for \(n>2\). This includes addressing the increased complexity of coherence data and developing generalized strictification methods that preserve universal properties in even higher dimensions.

  \item \textbf{Application to Diverse Logical Systems:}  
  Apply the integrated 2-categorical framework to other logical systems beyond those considered in this study. For instance, one may explore its applicability to modal logics, substructural logics, or homotopy type theory, thereby broadening the impact of categorical logic in different areas of mathematics and computer science.

  \item \textbf{Computational and Proof-Theoretic Aspects:}  
  Develop computational tools and algorithms for automated coherence verification and strictification. Investigate how these methods can be incorporated into proof assistants or formal verification systems to facilitate the practical application of categorical logic in theorem proving and type checking.

  \item \textbf{Interactions with Enriched and Internal Category Theory:}  
  Explore how the integration and strictification techniques interact with enriched category theory and internal categories. Such investigations could reveal deeper structural insights and lead to more versatile models of logical semantics.
\end{itemize}

These potential extensions promise to further enrich the theoretical foundations and practical applications of categorical logic.

\section{Further Improvements and Advice}

\begin{remark}[Further Improvements and Advice]
\leavevmode

In order to enhance the logical and categorical rigor of this work, the following improvements are recommended:

\begin{itemize}
    \item \textbf{Clarify the logical framework:}
    Provide a more explicit mapping between the proposed categorical constructions and the rules or axioms of specific logical systems (e.g., classical logic vs. intuitionistic logic). In particular, detail how the dualizing object captures or excludes certain classical principles such as the law of excluded middle.

    \item \textbf{Detail the negation modeling:}
    If dualizing objects are used to represent negation, explain how this approach aligns with (or differs from) classical and intuitionistic negation. Show explicitly if (and how) double-negation elimination or contradiction-based reasoning arises in the category.

    \item \textbf{Elaborate on 2-morphisms in proofs:}
    Illustrate how 2-morphisms correspond to equivalences or transformations between logical derivations. For instance, demonstrate a small proof or natural deduction step explicitly encoded as 2-morphisms, highlighting their role in ensuring coherence.

    \item \textbf{Strictification procedures:}
    Provide a more detailed construction of the strictification step, referencing classical results (e.g., the Gordon–Power–Street theorem). Include or sketch the relevant diagrams to clarify how free 2-categories are quotient-ed by coherence relations.

    \item \textbf{High-level extensions:}
    Indicate how these methods might extend to enriched categories, internal categories, or higher-dimensional structures (3-categories, tricategories, etc.). Briefly discuss if the same coherence arguments and universal property preservation hold in those settings.

    \item \textbf{Concrete examples:}
    Offer more illustrative examples (e.g., small proofs, explicit compositions) to clarify how the categorical semantics handle practical logical inferences, especially for readers less familiar with higher category theory.
\end{itemize}

These recommendations ensure both logicians and category theorists can more readily verify and appreciate the logical soundness and coherence inherent in the integrated 2-categorical approach.

\end{remark}

\newpage
\section{Final Remarks}
\label{sec:final_remarks}

In conclusion, the research presented in this work has made significant contributions to both the theoretical foundations and practical applications of categorical logic. By developing a robust framework for integrating local categories through higher category theory, we have achieved the following:

\begin{itemize}
  \item A systematic construction of local categories for various logical connectives, each characterized by its universal properties.
  \item The successful extension of these local structures into a unified 2-categorical framework enriched with natural isomorphisms, ensuring flexible yet coherent representation.
  \item The integration of the local categories via advanced 2-categorical composition techniques and the verification of coherence conditions through rigorous diagrammatic analysis.
  \item The application of strictification techniques, which convert weak structures into strict 2-categories, thereby simplifying composition rules and enhancing the reliability of formal proofs.
  \item The evaluation of the integrated framework through concrete examples, such as currying in exponential structures and deduction-style derivations, confirming that the essential logical semantics are preserved.
\end{itemize}

Overall, this work establishes a solid and versatile foundation for categorical models of logic. The approach not only advances the theoretical understanding of higher categorical structures but also paves the way for practical applications in type theory, programming language semantics, formal verification, and beyond. Future research may extend these methods to higher-dimensional categories and explore new applications in diverse logical systems, further enriching the landscape of categorical logic.

\bigskip

The contributions of this research demonstrate that integrating logical connectives via higher category theory is both a conceptually profound and practically valuable endeavor, opening new avenues for rigorous and flexible logical modeling.

\nocite{*}
\bibliographystyle{plain}
\bibliography{references}

\printindex

\end{document}